\documentclass{article}

\usepackage[margin=2.5cm,includefoot,footskip=30pt]{geometry}
\usepackage{amsmath}
\usepackage{amsthm, cite}
\usepackage{amssymb}
\usepackage[ansinew]{inputenc}
\usepackage{url}
\usepackage{enumerate}

\theoremstyle{plain}
    \newtheorem{lemma}{Lemma}[section]
    \newtheorem{theorem}[lemma]{Theorem}
    \newtheorem{cor}[lemma]{Corollary}
    \newtheorem{prop}[lemma]{Proposition}
\theoremstyle{definition}
    \newtheorem{que}[lemma]{Problem}
    \newtheorem{defi}[lemma]{Definition}
    \newtheorem{example}[lemma]{Example}
\theoremstyle{remark}
    \newtheorem{rem}[lemma]{Remark}

\newcommand{\frn}{\mathfrak{n}}
\newcommand{\cfn}{\mathrel{\sim_{\frn}}}

\newcommand\gj{\mathcal{J}}
\newcommand\gd{\mathcal{D}}
\newcommand\gl{\mathcal{L}}
\newcommand\gr{\mathcal{R}}
\newcommand\gh{\mathcal{H}}

\newcommand{\greenJ}{\mathrel{\gj}}
\newcommand{\greenD}{\mathrel{\gd}}
\newcommand{\greenL}{\mathrel{\gl}}
\newcommand{\greenR}{\mathrel{\gr}}
\newcommand{\greenH}{\mathrel{\gh}}

\DeclareMathOperator{\PAut}{PAut}
\DeclareMathOperator{\Epi}{Epi}

\DeclareMathOperator{\Inn}{Inn}
\DeclareMathOperator{\End}{End}

\newcommand{\inv}{^{-1}}                
\newcommand{\Z}{\mathbb{Z}}
\DeclareMathOperator\dom{dom}
\DeclareMathOperator\ima{im}
\DeclareMathOperator\id{id}

\title{The Inverse Monoid\\ of Partial Inner Automorphisms of a Semigroup}

\author{Jo\~ao Ara\'{u}jo\footnote{Center for Mathematics and Applications (NOVA Math) \& Department of Mathematics, NOVA School of Science and Technology, NOVA University of Lisbon, 2829–516 Caparica, Portugal;
\texttt{jj.araujo@fct.unl.pt}},
Wolfram Bentz\footnote{Center for Mathematics and Applications (NOVA Math), NOVA School of Science and Technology, NOVA University of Lisbon, 2829–516 Caparica, Portugal; and
Universidade Aberta, R. Escola Polit\'ecnica, 147, 1269--001 Lisboa, Portugal;
\texttt{Wolfram.Bentz@uab.pt}},
Michael Kinyon\footnote{Department of Mathematics, University of Denver, Denver, CO 80208, USA;
\texttt{michael.kinyon@du.edu}},
Janusz Konieczny\footnote{Department of Mathematics, University of Mary Washington, Fredericksburg, VA 22401, USA;
\texttt{jkoniecz@umw.edu}},\\
Ant\'onio Malheiro\footnote{Center for Mathematics and Applications (NOVA Math) \& Department of Mathematics, NOVA School of Science and Technology, NOVA University of Lisbon, 2829–516 Caparica, Portugal;
\texttt{ajm@fct.unl.pt}},
and
Valentin Mercier\footnote{Center for Mathematics and Applications (NOVA Math), NOVA School of Science and Technology, NOVA University of Lisbon, 2829–516 Caparica, Portugal;
\texttt{valen.mercier@gmail.com}}}
\date{}
\begin{document}

\maketitle

\begin{abstract}
We introduce the inverse monoid of inner partial automorphisms of a semigroup --- a tool that associates to every semigroup an inverse semigroup. When the semigroup is a group, this inverse semigroup is isomorphic to the group of inner automorphisms with a zero adjoined.

We then describe this structure for completely simple semigroups, the full transformation monoid, and the endomorphism monoid of a finite $G$-set, when $G$ is a finite abelian group. The paper ends with some open problems.

\vskip 2mm

\noindent\emph{$2020$ Mathematics Subject Classification\/}. {20M10, 20M15, 20M20.}

\vskip 2mm
\noindent\emph{Keywords\/}: Conjugacy; partial inner automorphisms; transformation semigroups.
\end{abstract}

\section{Introduction}

A \emph{partial automorphism} of an algebra $A$ (in the sense of universal algebra) is an isomorphism between subalgebras of $A$. The set $\PAut(A)$ generated by  all partial automorphisms of $A$ is an inverse monoid under composition of partial maps. The study of partial automorphisms of algebraic structures has a long history. O. I. Domanov gave an order theoretic characterization of those inverse monoids with $0$ which occur as partial automorphism monoids of universal algebras \cite{Domanov}. G. B. Preston posed the problem of characterizing algebras by their partial automorphism monoids \cite{Preston}. The partial automorphism monoid of semigroups was studied by S. M. Goberstein \cite{Goberstein1,Goberstein2} (see also \cite{FitzGerald1}). V. D. Derech classified those finite semigroups $S$ for which $\PAut(S)$ has all congruences permutable \cite{Derech} (also \cite{Derech2,Derech1}). More recently, P. J. Cameron proved that for a finite abelian group $A$, the underlying set of $\PAut(A)$ has the same cardinality as that of $\End(A)$, the monoid of all endomorphisms of $A$ \cite{pjc}.

Conjugacy in groups induces an intrinsic notion of automorphism. Two elements $a, b$ of a group $G$ are \emph{conjugate} if there exists $g\in G$ such that $b = g^{-1} a g$. For each $g\in G$, the transformation $G\to G; a\mapsto g^{-1} a g$ is an automorphism of $G$, known as an \emph{inner} automorphism.

There are many notions of conjugacy in semigroups \cite{ABKKMM,ArKiKoMaTA,AKM14}, and a growing body of research dedicated to them \cite{GarretaGray2021,Zhang,Zhang1992,Otto1984,NarendranOtto1985,NarendranOttoWinklmann1984,BorralhoKinyon2020,LiuWang2022,Jack2023,Jack2024}.  A natural one in inverse semigroups is the following: two elements $a,b$ of an inverse semigroup $S$ are \emph{i-conjugate} if there exists $g\in S^1$ such that $b = g\inv ag$ and $a = gbg\inv$ \cite{ArKiKo_Inv}. (As usual, $S^1 = S$ if $S$ is a monoid; otherwise $S^1 = S\cup \{1\}$ where $1$ is an adjoined identity element.) This type of conjugacy leads to \emph{partial inner} automorphisms of inverse semigroups: For each $g\in S$, the partial bijection $gSg\inv \to g\inv Sg; x\mapsto g\inv xg$ is an isomorphism of inverse subsemigroups. The set of all partial inner automorphisms of $S$ forms an inverse monoid. Partial inner automorphisms of inverse semigroups were first studied by G. B. Preston \cite{Preston2} and are related to the universal algebra center of inverse semigroups \cite{KS19}.

In 2018, the fourth author \cite{Ko18} defined a conjugacy $\cfn$ on any semigroup $S$ as follows: for $a,b \in S$, 
\begin{equation}\label{e1dcon2}
	a\cfn b\,\iff\, \exists_{g,h\in S^1}\ (\,ag=gb\,,\,\, bh=ha\,,\,\, hag=b \text{ and } gbh=a\,)\,.
\end{equation}
The relation $\cfn$, called \emph{natural conjugacy}, is an equivalence relation on any semigroup $S$. It turns out that in inverse semigroups, natural conjugacy coincides with i-conjugacy \cite{Ko18,ABKKMM}.

The goal of this paper is to show that natural conjugacy leads to a notion of partial inner automorphisms for arbitrary semigroups. These will form an inverse monoid. In the case where the underlying semigroup is itself inverse, this monoid will coincide with the monoid described above. In the group case, the partial inner automorphism monoid coincides with the usual inner automorphism group with a zero adjoined. Therefore, for every semigroup $S$, natural conjugacy $\cfn$ induces an inverse semigroup related in a particular way to $S$. It is natural to ask whether similar correspondences can be obtained from the other known conjugacy notions. The results in this paper appear to depend in a very delicate way on the specific features of $\cfn$.

In \S\ref{Sec:prelim}, we recall some results about the natural conjugacy relation. In \S\ref{Sec:mapping}, we introduce partial inner automorphisms and prove the main results about them. The rest of the paper is devoted to finding the partial inner automorphism monoids of various types of semigroups. In \S\ref{Sec:CS}, we compute the partial inner automorphisms of completely simple semigroups. In \S\ref{Sec:mappingT(X)}, we find the partial inner automorphisms of the full transformation monoid $T(X)$. In \S\ref{Sec:mappingG-set}, we find the partial inner automorphisms of the endomorphism monoid of a $G$-set for $G$ abelian. Finally, in \S\ref{Sec:questions}, we suggest some problems. 

\section{Preliminaries}
\label{Sec:prelim}

In this section we recall some general results about natural conjugacy $\cfn$ (or just $\frn$-conjugacy, for short) and its interactions with Green's relations. All these results are proved in (\!\!\cite{ABKKMM}, {\S}2). 

For a semigroup $S$, $a,b\in S$ and $g,h\in S^1$, consider the following equations:
\begin{center}
  \begin{tabular}{rccrc}
  (i)   & $ag = gb$       & \qquad & (ii)   & $bh= ha$ \\
  (iii) & $hag = b$       & \qquad & (iv)   & $gbh = a$ \\
  (v)   & $hg\cdot b = b$ & \qquad & (vi)   & $gh\cdot a = a$ \\
  (vii) & $b\cdot hg = b$ & \qquad & (viii) & $a\cdot gh = a$.
\end{tabular}
\end{center}
Our definition of $\cfn$ is based on (i), (ii), (iii) and (iv).
We now give some characterizations which will be useful later.

\begin{prop}\label{Prp:alternatives}
Let $S$ be a semigroup, and let $a,b\in S$ and $g,h\in S^1$. Each of the following sets of equations implies all of \textup{(i)--(viii)}, and thus $a\cfn b$.
\begin{center}
\begin{tabular}{rlcrl}
\textup{(1)} & $\{$\textup{(i),(iii),(iv)}$\}$ & \qquad &
\textup{(2)} & $\{$\textup{(ii),(iii),(iv)}$\}$ \\
\textup{(3)} & $\{$\textup{(i),(iii),(viii)}$\}$ & \qquad &
\textup{(4)} & $\{$\textup{(ii),(iv),(vii)}$\}$ \\
\textup{(5)} & $\{$\textup{(i),(iv)(v)}$\}$ & \qquad &
\textup{(6)} & $\{$\textup{(ii),(iii),(vi)}$\}$ \\
\textup{(7)} & $\{$\textup{(i),(v),(viii)}$\}$ & \qquad &
\textup{(8)} & $\{$\textup{(ii),(vi),(vii)}$\}$ \\
\textup{(9)} & $\{$\textup{(iii),(iv),(v)}$\}$ & \qquad &
\textup{(10)} & $\{$\textup{(iii),(iv),(vi)}$\}$ \\
\textup{(11)} & $\{$\textup{(iii),(iv),(vii)}$\}$ & \qquad &
\textup{(12)} & $\{$\textup{(iii),(iv),(viii)}$\}$ \\
\textup{(13)} & $\{$\textup{(iii),(vi),(viii)}$\}$ & \qquad &
\textup{(14)} & $\{$\textup{(iv),(v),(vii)}$\}$ \\
\textup{(15)} & $\{$\textup{(i),(ii),(v),(vii)}$\}$ & \qquad &
\textup{(16)} & $\{$\textup{(i),(ii),(vi),(viii)}$\}$ \\
\end{tabular}
\end{center}
\end{prop}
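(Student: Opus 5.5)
The plan is to establish the implications by direct substitution, using only associativity. I will first handle the two "core" sets (1) and (2) completely, and then reduce each of the remaining sets (3)--(16) to one of these, or to (9) or (12), by deriving one missing equation. Every step is a one-line rewriting in which a factor is replaced by an expression it equals by one of the equations already available.

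\emph{Elementary implications.} I would first record the following small facts.
\begin{itemize}
\item (i)+(iii) give (v), since $b=hag=hgb$.
\item (ii)+(iii) give (vii), since $b=hag=bhg$.
\item (i)+(iv) give (viii), since $a=gbh=agh$.
\item (ii)+(iv) give (vi), since $a=gbh=gha$.
\item (i)+(v) give (iii), since $hag=hgb=b$.
\item (ii)+(vii) give (iii), since $hag=bhg=b$.
\item (i)+(viii) give (iv), since $gbh=agh=a$.
\item (ii)+(vi) give (iv), since $gbh=gha=a$.
\end{itemize}

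\emph{The core cases.} For (1), (i) and (iii) give (v). Then $ha=h(gbh)=(hgb)h=bh$, which is (ii). Now all of (i)--(iv) hold, and the elementary implications supply (vi)--(viii). Case (2) is the left--right dual: (ii) and (iv) give (vi), and then $ag=(gbh)g=g(bhg)$. To finish this I instead use (ii)+(iii), which give (vii) $b=bhg$, so $ag=gbhg=gb$, which is (i). We are then back in (1).

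\emph{Reductions.}
\begin{itemize}
\item (3): (i)+(viii) give (iv), so we are in (1).
\item (4): (ii)+(vii) give (iii), so we are in (2).
\item (5): (i)+(v) give (iii), so we are in (1).
\item (6): (ii)+(vi) give (iv), so we are in (2).
\item (7): (i) with (v) and (viii) gives (iii) and (iv), so we are in (1).
\item (8): dually, (ii) with (vi) and (vii) gives (iii) and (iv), so we are in (2).
\item (9): $ha=hgbh=bh$ by (v), which is (ii), so we are in (2).
\item (10): $ag=ghag=gb$ by (vi) and (iii), which is (i), so we are in (1).
\item (11): $ag=gbhg=gb$ by (iv) and (vii), so we are in (1).
\item (12): $bh=hagh=ha$ by (iii) and (viii), so we are in (2).
\item (13): $gbh=g(hag)h=(gha)gh=agh=a$, which is (iv), so we are in (12).
\item (14): $hag=h(gbh)g=(hgb)hg=bhg=b$, which is (iii), so we are in (9).
\item (15): $hag=hgb=b$, which is (iii), so we are in (1).
\item (16): $gbh=gha=a$, which is (iv), so we are in (2).
\end{itemize}
Once (i)--(iv) hold, $a\cfn b$ follows by the definition \eqref{e1dcon2}.

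The only step needing any care is extracting a "commutation" equation, (i) or (ii), from the sets that lack one, namely (9)--(14). The trick I will use there is to substitute the expression for $a$ (respectively $b$) from (iv) (respectively (iii)) into a product, and then collapse a triple such as $hgb$ or $gha$ by means of the available absorption law among (v)--(viii). I expect this to go through uniformly. Because $g,h\in S^1$, no case distinctions are needed.
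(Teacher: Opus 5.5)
Your substitutions for sets (1)--(14) are all correct, but the reductions for (15) and (16) are invalid. The paper gives no proof of its own here; it defers to \cite{ABKKMM}.

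\textbf{The gap.} In (15) you obtain (iii) from (i) and (v), and then say you are in (1). But (1) is $\{$(i),(iii),(iv)$\}$, and you never derive (iv). Your elementary implications produce (iv) only from (i)+(viii) or from (ii)+(vi), and neither (vi) nor (viii) is available in (15). Symmetrically, in (16) you derive (iv) but not (iii), so you are not in (2).

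\textbf{It cannot be repaired.} Cases (15) and (16), as printed, are false. Take any semigroup with a zero $0$ and an element $c\neq 0$, and set $a=c$, $b=0$, $g=h=0$.
\begin{itemize}
\item Then $ag=0=gb$, $bh=0=ha$, $hg\cdot b=0=b$ and $b\cdot hg=0=b$, so (i), (ii), (v) and (vii) all hold.
\item But $gbh=0\neq a$, so (iv) fails.
\item Indeed $c\not\cfn 0$, since $[0]_{\frn}=\{0\}$.
\end{itemize}
The mirror choice $a=0$, $b=c$, $g=h=0$ refutes (16). The underlying reason is that the equations in (15) constrain $a$ only through $ag$ and $ha$, and both of these can vanish.

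\textbf{What your method does prove.} In one line each, it proves the sets $\{$(i),(ii),(v),(vi)$\}$ and $\{$(i),(ii),(vii),(viii)$\}$.
\begin{itemize}
\item For the first set, (i)+(v) give (iii) and (ii)+(vi) give (iv), so you are in (1).
\item For the second set, (ii)+(vii) give (iii) and (i)+(viii) give (iv), so you are again in (1).
\end{itemize}
These are presumably the intended sets. You should flag the statement as misprinted rather than claim a proof of it as written.

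Nothing later in the paper is affected. It only ever invokes sets (13) and (7): in defining $\phi_{g,h}$ and in the proof of Theorem~\ref{Thm:inv_conj}. Your argument covers both of these.
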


For a semigroup $S$, if $a,b\in S$ satisfy $a\cfn b$, then there exist \emph{conjugators} $g,h\in S^1$ satisfying all of the conditions (i)--(viii). We will often use (i)--(viii) without explicit reference.

For $a\in S$ we write $[a]_{\frn}=\{b\in S:b\cfn a\}$ for the $\frn$-conjugacy class of $a$.

\begin{rem}\label{Rem:n_conj_class_01}
Note that in any semigroup with a zero, $[0]_{\frn} =\{0\}$, and in any monoid $M$ with identity $1$, $[1]_{\frn}=\{gh\in M\,:\,hg=1\}$.
\end{rem}

\begin{prop}\label{Prp:subsemi}
Let $S$ be a semigroup, let $g,h\in S^1$, and let
\[
K_{g,h} = \{ (a,b)\in S\times S\,:\, a\cfn b\text{ with }g,h\text{ as conjugators }\}\,.
\]
Then $K_{g,h}$ is a subsemigroup of $S\times S$.
\end{prop}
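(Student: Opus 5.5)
Fix conjugators $g,h\in S^1$ and take $(a,b),(c,d)\in K_{g,h}$. The plan is to show $(ac,bd)\in K_{g,h}$ by checking one of the sufficient sets in Proposition~\ref{Prp:alternatives} for the pair $ac,bd$ with the same $g,h$. I would use set (1), namely (i), (iii) and (iv). Since $a\cfn b$ and $c\cfn d$ both have $g,h$ as conjugators, all of (i)--(viii) hold for $(a,b)$ and for $(c,d)$, and these identities are freely available.

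The order of the checks is as follows.
\begin{itemize}
\item For (i), use (i) for $c$ and then for $a$: $acg = a(gd) = (ag)d = gbd$.
\item For (iii), first use (ii) for $(a,b)$, which gives $ha=bh$, and then (iii) for $(c,d)$: $h(ac)g = (ha)cg = b(hcg) = bd$.
\item For (iv), use (i) for $(a,b)$, written as $gb=ag$, and then (iv) for $(c,d)$: $g(bd)h = (gb)dh = a(gdh) = ac$.
\end{itemize}
Proposition~\ref{Prp:alternatives}(1) then gives all of (i)--(viii) for $(ac,bd)$ with conjugators $g,h$. Hence $ac\cfn bd$ with $g,h$ as conjugators, so $(ac,bd)\in K_{g,h}$. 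Associativity of the product in $S\times S$ is inherited from $S$, so $K_{g,h}$ is a subsemigroup.

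There is no substantive obstacle. The only point needing care is to choose the right equation for each rewrite, using (ii) to move $h$ past $a$ and (i) to move $g$ past $b$. One should also note that $g,h$ may equal the adjoined identity in $S^1$, and that the computations are valid in $S^1$ regardless.
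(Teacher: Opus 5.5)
Your proof is correct. The three computations establish (i), (iii) and (iv) for the pair $(ac,bd)$, and Proposition~\ref{Prp:alternatives}(1) then gives $ac\cfn bd$ with conjugators $g,h$; you could also check (ii) directly via $bdh=bhc=hac$ and skip that appeal. The paper does not prove this proposition itself but defers to \cite{ABKKMM}. Your direct verification is the natural argument, and it is the same kind of manipulation the paper uses in the homomorphism part of Theorem~\ref{Thm:phi_iso}, where $K_{g,h}$ is exactly the graph of $\phi_{g,h}$.
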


\begin{cor}\label{Cor:powers}
Let $S$ be a semigroup and assume $a,b\in S$ satisfy $a\cfn b$ with conjugators $g,h\in S^1$. Then for all positive integers $k$, $a^k\cfn b^k$ with conjugators $g,h$.
\end{cor}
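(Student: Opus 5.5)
This is an induction on $k$ driven by Proposition~\ref{Prp:subsemi}. The set $K_{g,h}$ of pairs $(a,b)$ that are $\frn$-conjugate with the fixed conjugators $g,h$ is a subsemigroup of $S\times S$, with componentwise multiplication. By hypothesis $(a,b)\in K_{g,h}$. Closure under multiplication then gives $(a,b)^k=(a^k,b^k)\in K_{g,h}$ for every $k\ge 1$, which is exactly the claim.

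For the induction itself, the base case $k=1$ is the hypothesis. If $(a^k,b^k)\in K_{g,h}$, then the product $(a^k,b^k)(a,b)=(a^{k+1},b^{k+1})$ lies in $K_{g,h}$ by Proposition~\ref{Prp:subsemi}.

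There is no real obstacle here; all the content sits in Proposition~\ref{Prp:subsemi}. To avoid relying on it as a black box, one can check (i), (iii) and (iv) for $a^k,b^k$ directly and conclude via set (1) of Proposition~\ref{Prp:alternatives}:
\begin{itemize}
\item (i) follows by repeatedly pushing $g$ through with $ag=gb$, giving $a^kg=gb^k$.
\item (iii): we have $ha^kg=hgb^k=b^k$, using (i) and then (v).
\item (iv): similarly $gb^kh=gha^k=a^k$, using (ii) and then (vi).
\end{itemize}
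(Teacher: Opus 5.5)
Your proof is correct and follows the paper's intended route: the statement is an immediate corollary of Proposition~\ref{Prp:subsemi}, since $(a^k,b^k)=(a,b)^k\in K_{g,h}$. Your direct check of (i), (iii) and (iv) for $a^k,b^k$, followed by set (1) of Proposition~\ref{Prp:alternatives}, is also valid and makes the argument self-contained.
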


If $S$ is a semigroup and $a,b\in S$, we say that $a\greenL b$ if $S^1a = S^1b$, $a\greenR b$ if $aS^1 = bS^1$, and $a\greenJ b$ if $S^1aS^1 = S^1bS^1$. We define $\gh = \gl\cap \gr$, and $\gd$ as the join of $\gl$ and $\gr$, that is, the smallest equivalence relation on $S$ containing both $\gl$ and $\gr$. These five equivalence relations are known as \emph{Green's relations} \cite[p.~45]{Howie}, one of the most important tools in studying semigroups. The relations $\gl$ and $\gr$ commute \cite[Prop.~2.1.3]{Howie}; consequently, $\gd = \gl\circ\gr = \gr\circ\gl$. We have $\gd\subseteq \gj$, and in epigroups, such as finite or periodic semigroups, $\gd = \gj$ \cite[Prop.~2.1.4]{Howie}.

\begin{prop}\label{Prp:D}
	Let $S$ be a semigroup. Then:
	\begin{enumerate}
		\item\quad $\cfn\ \subseteq\ \greenD$;
		\item\quad for all $e,a\in S$, if $e$ is an idempotent and $e\cfn a$, then $a$ is also an idempotent;
		\item\quad Restricted to idempotents, $\cfn$ and $\greenD$ coincide.
	\end{enumerate}
\end{prop}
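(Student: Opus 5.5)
Throughout I fix $a\cfn b$ together with conjugators $g,h\in S^1$; by Proposition~\ref{Prp:alternatives} they satisfy all of (i)--(viii). Each of the three parts is then a short computation with these equations.

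\emph{Part 1.} I show $a\greenR ag$ and $ag\greenL b$, so that $a \mathrel{\gr\circ\gl} b$ and hence $a\greenD b$. By (viii), $a=a\cdot gh=(ag)h$, so $a\in agS^1$; trivially $ag\in aS^1$. This gives $a\greenR ag$. By (i), $ag=gb$, so $ag\in S^1b$. By (iii), $b=hag=h(ag)$, so $b\in S^1(ag)$. This gives $ag\greenL b$. Since $g,h\in S^1$, every element written above lies in $S^1$ or $S$ as needed, and no further work is required for this part.

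\emph{Part 2.} Suppose $e^2=e$ and $e\cfn a$. I take conjugators with $eg=ga$, $ah=he$, $heg=a$ and $gah=e$. Then
\[
a^2=(hega)(hega)\text{?}
\]
A cleaner route is Corollary~\ref{Cor:powers}: since $e\cfn a$ with conjugators $g,h$, also $e^2\cfn a^2$ with the same conjugators. Applying (iii) to both pairs gives $a^2=he^2g=heg=a$. So $a$ is idempotent.

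\emph{Part 3.} One inclusion is Part 1. For the converse, take idempotents $e\greenD f$. Then there is $x$ with $e\greenR x\greenL f$. Standard Green's lemma theory (Miller--Clifford) gives an inverse $x'$ of $x$ with $xx'=e$ and $x'x=f$. I set $g=x$ and $h=x'$ and check set (1) of Proposition~\ref{Prp:alternatives}, namely (i), (iii) and (iv).
\begin{itemize}
\item[(i)] $eg=ex=x$ because $e\greenR x$ and $e$ is a left identity of its $\gr$-class. Likewise $gf=xf=x$. Hence $eg=gf$.
\item[(iii)] $heg=x'ex=x'x=f$, using $ex=x$.
\item[(iv)] $gfh=xfx'=xx'=e$, using $xf=x$.
\end{itemize}
Therefore $e\cfn f$.

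The main obstacle is justifying the inverse $x'$ with $xx'=e$ and $x'x=f$. To obtain it, I write $x=eu$ for some $u$, take $v$ with $f=vx$ (available since $x\greenL f$), and set $x'=fve$. Then $x'x=fvex=fvx=f\cdot f=f$. Also $xx'\greenR x\greenR e$, and $xx'$ is idempotent. Since $xx'=x(fve)\in S^1e$ and $e\in xS^1$, both $xx'$ and $e$ are idempotents in the same $\gr$-class with $xx'e=xx'$. For idempotents in one $\gr$-class, each is a left identity for the other, so $e\cdot xx'=xx'$. On the other hand $xx'e=x(fvee)=xx'$ and $xx'e=e$, because $e\in xx'S^1$ and $xx'$ is a left identity of its $\gr$-class. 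Hence $xx'=e$.
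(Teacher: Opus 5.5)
Your proof is correct. The paper gives no proof of this proposition; it quotes it from \cite{ABKKMM}, \S2. Your argument is the standard one:
\begin{itemize}
\item part~1 uses the chain $a\greenR ag\greenL b$;
\item part~2 uses Corollary~\ref{Cor:powers} together with (iii);
\item part~3 takes $x$ with $e\greenR x\greenL f$ and $x'=fve$ satisfying $xx'=e$ and $x'x=f$, and uses them as conjugators.
\end{itemize}
Since $x,x'$ are mutually inverse and lie in the $\gd$-class of $e$, you have in fact also proved Theorem~\ref{Thm:D-idem}.

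There are three small points to tidy up:
\begin{itemize}
\item Delete the abandoned display $a^2=(hega)(hega)$? in part~2.
\item State why $xx'\greenR x$: it holds because $x=xf=(xx')x$.
\item The last step can be shortened. Write $e=xw$ with $w\in S^1$; then $xx'=xx'e=x(x'x)w=xfw=xw=e$.
\end{itemize}
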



    

A pair $g,h$ of elements of a semigroup $S$ are said to be \emph{mutually inverse} if $ghg=g$ and $hgh=h$.

\begin{theorem}\label{Thm:D-idem}
  Let $S$ be a semigroup and let $e,f\in S$ be idempotents. Then $e\cfn f$ if and only if $e\greenD f$. Furthermore, if $e\cfn f$, then there exist mutually inverse conjugators $g,h$ in the same $\greenD$-class as $e,f$.
\end{theorem}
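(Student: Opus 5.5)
The equivalence itself is Proposition~\ref{Prp:D}(3), so the real content is the second assertion: producing mutually inverse conjugators in the common $\greenD$-class. I would build them explicitly from the standard description of $\greenD$-related idempotents. Since $e\greenD f$, there is $c\in S$ with $e\greenR c\greenL f$. The $\greenR$-class of the idempotent $e$ and the $\greenL$-class of the idempotent $f$ meet in $c$, so by the standard Clifford--Miller/Green argument $c$ has an inverse $c'$ in $R_f\cap L_e$ with $cc'=e$ and $c'c=f$. Concretely, $e\greenR c$ gives $ec=c$, and $c\greenL f$ gives $cf=c$. Writing $e=cx$ for some $x\in S^1$, put $c'=fxe$. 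Then $cc'=cfxe=cxe=e$. Moreover $c'c=fxec=fxc$ is an idempotent that is $\greenL$-related to $c$, hence to $f$, and $\greenL$-related idempotents are both right identities for each other, which pins down $c'c=f$. Both $c$ and $c'$ lie in the $\greenD$-class of $e$ and $f$, and $cc'c=ec=c$ and $c'cc'=fc'=c'$, so they are mutually inverse.

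Next I would take $g=c$ and $h=c'$, with $a=e$ and $b=f$, and invoke Proposition~\ref{Prp:alternatives}(9) by checking (iii), (iv) and (v).
\begin{itemize}
\item[(iii)] $hag=c'ec=c'c=f$, using $ec=c$.
\item[(iv)] $gbh=cfc'=cc'=e$, using $cf=c$.
\item[(v)] $hg\cdot b=c'c\,f=ff=f$.
\end{itemize}
Proposition~\ref{Prp:alternatives}(9) then gives all of (i)--(viii), and in particular $e\cfn f$ with conjugators $c,c'$. This also reproves the forward implication of the equivalence independently.

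For the converse I would simply cite Proposition~\ref{Prp:D}(1): $\cfn\subseteq\greenD$.

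The main obstacle is the lemma that $c'c=f$. I would verify it directly as follows. Put $u=c'c$. Then $uu=c'(cc')c=c'ec=c'c$, so $u$ is idempotent. Since $c=cu$ and $u\in S^1c$, we have $u\greenL c\greenL f$. For $\greenL$-related idempotents $u,f$, write $u=sf$ and $f=tu$; this gives $uf=u$ and $fu=f$. Also $u=c'c=fxc$ starts with $f$, so $fu=u$. Combining, $u=fu=f$. Once this is in place, the remaining steps are routine.
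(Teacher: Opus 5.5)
Your proof is correct and complete: choosing $c\in R_e\cap L_f$, building its inverse $c'=fxe\in R_f\cap L_e$ with $cc'=e$ and $c'c=f$, and checking (iii), (iv), (v) for $g=c$, $h=c'$ establishes both directions, once Proposition~\ref{Prp:D}(1) supplies $e\greenD f$ from $e\cfn f$. Your appeal to Proposition~\ref{Prp:D}(3) is therefore unnecessary. The paper does not prove this theorem itself but defers to \cite{ABKKMM}, \S2, and your self-contained argument is the standard route to it.
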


\section{Conjugacy $\cfn$ and partial inner automorphisms}
\label{Sec:mapping}
\setcounter{equation}{0}

Let $S$ be a semigroup, fix $g,h\in S^1$, and define
\[
D_{g,h} = \{ a\in S \mid gh\cdot a = a\cdot gh = a \}\,.
\]
Note that for all $a,b\in S$, $a\cfn b$ with conjugators $g$ and $h$ if and only if $a\in D_{g,h}$ and $b = hag$ (see Proposition~\ref{Prp:alternatives}).

Let $\preceq$ be a preorder (a reflexive, transitive binary relation) on a set $A$. We say that a subset $B$ of $A$ is \emph{downward directed} in $\preceq$ if for all $a\in A$ and $b\in B$, $a\preceq b$ implies $a\in B$.

Let $S$ be a semigroup. Then the relation $\preceq_{\greenH}$ on $S$ defined by $a\preceq_{\greenH}b$ if $sb=a=bt$
for some $s,t\in S^1$ is a preorder on $S$. Note that if $a\preceq_{\greenH}b$ and $b\preceq_{\greenH}a$, then $a\greenH b$.

The \emph{natural partial order} $\leq$ on a semigroup $S$ \cite{Mitsch} is defined by
\[
a\leq b\,\iff\, \exists s,t\in S^1\ (\ sa = a = at \text{ and } sb = a = bt \ )\,.
\]
The natural partial order refines the $\greenH$-preorder.

\begin{lemma}\label{Lem:domain_facts}
Let $S$ be a semigroup and let $g,h\in S^1$. Then:
\begin{itemize}
  \item[\textup{(1)}] $D_{g,h}$ is a subsemigroup of $S$;
  \item[\textup{(2)}] $D_{g,h}$ is downward directed in the $\greenH$-preorder $\preceq_{\greenH}$;
  \item[\textup{(3)}] $D_{g,h}$ is downward directed in the natural partial order $\leq$;
  \item[\textup{(4)}] if $a\in D_{g,h}$, then $H_a\subseteq D_{g,h}$, where $H_a$ denotes the
  $\greenH$-class of $a$ in $S$.
\end{itemize}
\end{lemma}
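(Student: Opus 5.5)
The plan is to work directly from the definition $D_{g,h}=\{a\in S: gh\cdot a=a\cdot gh=a\}$. The first observation is that every part reduces to one elementary fact. Put $e=gh\in S^1$. Then $a\in D_{g,h}$ exactly when $e$ acts as a two-sided identity on $a$. Note that $e$ need not be idempotent, and we never use idempotency.

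For (1), I take $a,b\in D_{g,h}$ and compute
\[
e(ab)=(ea)b=ab \quad\text{and}\quad (ab)e=a(be)=ab.
\]
So $ab\in D_{g,h}$, and $D_{g,h}$ is closed under multiplication. Nonemptiness is not at issue, since subsemigroups are allowed to be empty here, or one can simply say ``closed under multiplication''.

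For (2), let $b\in D_{g,h}$ and $a\preceq_{\greenH} b$. Then $a=sb=bt$ for some $s,t\in S^1$. I use the representation $a=bt$ to compute
\[
ea=e(bt)=(eb)t=bt=a,
\]
and the representation $a=sb$ to compute
\[
ae=(sb)e=s(be)=sb=a.
\]
Hence $a\in D_{g,h}$. The only point needing care is to choose the appropriate factorization for each side. This is the whole content of the lemma, and I do not expect a real obstacle.

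Part (3) follows from (2), because the natural partial order refines the $\greenH$-preorder. Explicitly, if $a\le b$ then $a=sb=bt$, so $a\preceq_{\greenH} b$. Part (4) also follows from (2): if $c\in H_a$ then $c\greenH a$, so in particular $c\preceq_{\greenH} a$, and downward directedness gives $c\in D_{g,h}$.
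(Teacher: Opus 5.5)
Your proof is correct and follows essentially the same route as the paper: (1) by associativity, (2) by using the factorization $a=bt$ to get $gh\cdot a=a$ and $a=sb$ to get $a\cdot gh=a$, and (3) and (4) as immediate consequences of (2). The only difference is that the paper dismisses (1) as clear, whereas you write out the computation.
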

\begin{proof}
(1) is clear. For (2), assume $a\in D_{g,h}$ and $c\preceq_{\greenH} a$. Then there exist
$s,t\in S^1$ such that $sa = c = at$. We have $c\cdot gh = s\underbrace{a\cdot gh} = sa = c$
and $gh\cdot c = \underbrace{gh\cdot a}t = a t = c$, and so $c\in D_{g,h}$, as claimed.
Now (3) follows from (2) since $\leq$ refines $\preceq_{\greenH}$. Finally, (4) also follows from (2).
\end{proof}

Now we define a mapping by
\[
\phi_{g,h} : D_{g,h}\to S ; a\mapsto hag\,.
\]

\begin{theorem}\label{Thm:phi_iso}
The mapping  $\phi_{g,h}$ is a partial automorphism of $S$, specifically, it is an isomorphism
 from $D_{g,h}$ to $D_{h,g}$.
\end{theorem}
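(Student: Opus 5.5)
We need to show four things: $\phi_{g,h}$ maps $D_{g,h}$ into $D_{h,g}$, it is a homomorphism, it is injective, and its image is all of $D_{h,g}$. The plan is to get all four from the equivalence noted just before Lemma~\ref{Lem:domain_facts}, namely that $a\cfn b$ with conjugators $g,h$ iff $a\in D_{g,h}$ and $b=hag$, together with its symmetric version. The roles of $(a,g,h)$ and $(b,h,g)$ are interchangeable in (i)--(viii): swapping $a\leftrightarrow b$ and $g\leftrightarrow h$ permutes these equations. So $a\cfn b$ with conjugators $g,h$ iff $b\cfn a$ with conjugators $h,g$. The latter is equivalent to $b\in D_{h,g}$ and $a=gbh$.

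\textbf{Well-definedness.} Let $a\in D_{g,h}$ and put $b=hag$. Then $a\cfn b$ with conjugators $g,h$, so all of (i)--(viii) hold. Equations (v) and (vii) say $hg\cdot b=b=b\cdot hg$, that is, $b\in D_{h,g}$. Hence $\phi_{g,h}$ maps $D_{g,h}$ into $D_{h,g}$. By the same argument, $\phi_{h,g}\colon D_{h,g}\to D_{g,h}$, $b\mapsto gbh$, is well defined.

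\textbf{Bijectivity.} For $a\in D_{g,h}$ we have $\phi_{h,g}(\phi_{g,h}(a))=g(hag)h=gbh=a$ by (iv). Symmetrically, $\phi_{g,h}\circ\phi_{h,g}=\id_{D_{h,g}}$. So $\phi_{g,h}$ is a bijection with inverse $\phi_{h,g}$. This can also be checked directly: $ghagh=(gh\cdot a)\cdot gh=a\cdot gh=a$.

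\textbf{Homomorphism.} Let $a,a'\in D_{g,h}$. Then
\[
\phi_{g,h}(a)\phi_{g,h}(a')=hag\,ha'g=h\,a(gh\cdot a')\,g=haa'g=\phi_{g,h}(aa'),
\]
using $gh\cdot a'=a'$. Here $aa'\in D_{g,h}$ by Lemma~\ref{Lem:domain_facts}(1). This step could alternatively be obtained from Proposition~\ref{Prp:subsemi}, but the direct computation is shorter.

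Since $D_{g,h}$ and $D_{h,g}$ are subsemigroups by Lemma~\ref{Lem:domain_facts}(1), $\phi_{g,h}$ is an isomorphism between subsemigroups of $S$, i.e.\ a partial automorphism. The only point needing care is the image claim $\phi_{g,h}(D_{g,h})\subseteq D_{h,g}$. This rests on Proposition~\ref{Prp:alternatives}, specifically on deriving (v) and (vii) from the defining conditions. Directly, $hg\cdot hag=h(gh\cdot a)g=hag$, and symmetrically on the right, so even this step is immediate.
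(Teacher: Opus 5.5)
Your proof is correct and follows essentially the same route as the paper. Both show $hag\in D_{h,g}$ via Proposition~\ref{Prp:alternatives}, both prove bijectivity by checking that $\phi_{h,g}$ is a two-sided inverse, and both get the homomorphism property from the single substitution $ha_1a_2g = ha_1g\,ha_2g$ (the paper phrases it as $a_2g=gb_2$, you as $gh\cdot a_2=a_2$); your extra direct checks such as $hg\cdot hag=h(gh\cdot a)g$ only make the argument self-contained.
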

\begin{proof}
  For $a\in D_{g,h}$, set $b = a\phi_{g,h} = hag$. By Proposition \ref{Prp:alternatives}, $a\cfn b$ with $g,h$ as conjugators. Thus we also have $hg\cdot b = b\cdot hg = b$, that is, $b\in D_{h,g}$. In addition, $gbh = a$, that is, $b\phi_{h,g} = a$. Since
  $a\phi_{g,h}\phi_{h,g} = ghagh = a$ and $b\phi_{h,g}\phi_{g,h} = b$, we have that $\phi_{g,h}$ is a bijection from $D_{g,h}$ to $D_{h,g}$.

  Finally we show that $\phi_{g,h}$ is a homomorphism. Let $a_1,a_2\in D_{g,h}$ be given and set $b_i = ha_i g$ for $i=1,2$. Since $a_i\cfn b_i$, we have $(a_1 a_2)\phi_{g,h} = h a_1 \underbrace{a_2 g} =
  \underbrace{h a_1 g} b_2 = b_1 b_2$, which establishes the claim.
\end{proof}

\begin{theorem}\label{Thm:H_bijections}
    The bijection $\phi_{g,h} : D_{g,h}\to D_{h,g}$ restricts to bijections between $\greenH$-classes, that is, for $a\in D_{g,h}$ and $b = a\phi_{g,h}$, the restriction of $\phi_{g,h}$ to $H_a$ is a bijection onto $H_b$. Further, if $H_a$ is a group $\greenH$-class then $\phi_{g,h}$ is a group isomorphism.
\end{theorem}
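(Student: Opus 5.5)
By Lemma~\ref{Lem:domain_facts}(4), $H_a\subseteq D_{g,h}$, so $\phi_{g,h}$ is defined on all of $H_a$. The plan is to show that $\phi_{g,h}$ and its inverse $\phi_{h,g}$ both preserve the relation $\greenH$. Then $\phi_{g,h}$ maps $H_a$ into $H_b$, and $\phi_{h,g}$ maps $H_b$ into $H_a$. Since these maps are mutually inverse (Theorem~\ref{Thm:phi_iso}), this gives the bijection $H_a\to H_b$.

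For the preservation step, take $c\in H_a$, so $c=sa$ and $a=s'c$ for some $s,s'\in S^1$. Both $a$ and $c$ lie in $D_{g,h}$, so $gh\cdot a=a$ and $gh\cdot c=c$. We compute
\[
c\phi_{g,h}=hcg=hsag=hs\,gh\,ag=(hsg)(hag)=(hsg)\,b .
\]
In the same way $b=(hs'g)\,(c\phi_{g,h})$, so $c\phi_{g,h}\greenL b$. The right-hand version is symmetric: write $c=at$ and use $a\cdot gh=a$ to get $hcg=hag\,(htg)$. This gives $c\phi_{g,h}\greenR b$, and hence $c\phi_{g,h}\greenH b$. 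The elements $hsg$ etc.\ lie in $S^1$; if $s=1$ we simply use $c=a$. The same argument with $g,h$ swapped, applied to $\phi_{h,g}$ on $D_{h,g}$ (which contains $H_b$ by Lemma~\ref{Lem:domain_facts}(4)), gives the reverse inclusion.

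For the group statement, $\phi_{g,h}$ is a homomorphism (Theorem~\ref{Thm:phi_iso}), and its restriction to $H_a$ is a bijection onto $H_b$. If $H_a$ is a group with identity $e$, then $e\phi_{g,h}$ is idempotent, so $H_b$ contains an idempotent and is itself a group. A bijective homomorphism between groups is a group isomorphism, which finishes the proof.

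The only delicate point is inserting $gh$ to factor $hsag$ as $(hsg)(hag)$. This uses $a\in D_{g,h}$, and the other $\greenH$-class elements are in $D_{g,h}$ as well.
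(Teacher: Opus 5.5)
Your proof is correct and is essentially the paper's argument: you transport the $\greenH$-witnesses $s,s',t,t'$ to $hsg$, $hs'g$, $htg$, $ht'g$ and check the four equations, then use $\phi_{h,g}$ for the reverse inclusion and Theorem~\ref{Thm:phi_iso} for the group isomorphism. The only cosmetic difference is that where the paper invokes $gb=ag$ and $bh=ha$, you insert $gh$ using $a\in D_{g,h}$, which amounts to the same identity.
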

\begin{proof}
    Fix $c\in H_a$ and let $d = c\phi_{g,h} = hcg$. There exist $s_1,s_2,t_1,t_2\in S^1$ such that $s_1 a = c$, $s_2 c = a$, $a t_1 = c$, $c t_2 = a$. Set $\bar{s}_i = h s_i g$ and $\bar{t}_i = h t_i g$ for $i=1,2$. Then
    \begin{align*}
      \bar{s}_1 b &= h s_1 \underbrace{g b} = h \underbrace{s_1 a} g = h c g = d\,, \\
      \bar{s}_2 d &= h s_2 \underbrace{g h c} g = h s_2 c g = h a g = b\,, \\
      b \bar{t}_1 &= \underbrace{b h} t_1 g = h \underbrace{a t_1} g = h c g = d\quad\text{and} \\
      d \bar{t}_2 &= h \underbrace{c g h} t_2 g = h \underbrace{c t_2} g = h a g = b\,.
    \end{align*}
    This proves $d\greenH b$. Thus $(H_a)\phi_{g,h}\subseteq H_b$ and by symmetry, $(H_b)\phi_{h,g}\subseteq H_a$. Finally $H_b = (H_b)\phi_{h,g}\phi_{g,h}\subseteq (H_a)\phi_{g,h}\subseteq H_b$, so that $\phi_{g,h}$ is a bijection of $H_a$ onto $H_b$. The remaining assertion follows from Theorem \ref{Thm:phi_iso}.
\end{proof}

\begin{rem}
It is a basic result in semigroup theory that any two group $\greenH$-classes in the same $\greenD$-class of a semigroup are isomorphic \cite[Prop.~2.3.6]{Howie}. We have actually reproved this; it follows from Theorem \ref{Thm:D-idem} and Theorem \ref{Thm:H_bijections}. Our proofs are certainly more involved but better highlight the role of $\frn$-conjugacy.
\end{rem}

\begin{cor}\label{Cor:Hcommute}
 $\greenH\circ \cfn\ =\ \cfn\circ \greenH$.
\end{cor}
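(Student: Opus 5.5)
To prove $\greenH\circ \cfn\ =\ \cfn\circ \greenH$, I will show the inclusion $\greenH\circ\cfn\subseteq\cfn\circ\greenH$ directly from Theorem~\ref{Thm:H_bijections} and Lemma~\ref{Lem:domain_facts}. The reverse inclusion then follows by taking converses. Both $\greenH$ and $\cfn$ are symmetric relations, so the converse of $\greenH\circ\cfn$ is $\cfn\circ\greenH$ and vice versa. Hence one inclusion for all pairs gives the other, and it suffices to handle a single direction.

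I read composition as $x\mathrel{(\rho\circ\sigma)}z$ iff there is $y$ with $x\mathrel{\rho}y\mathrel{\sigma}z$. The other convention only swaps which inclusion is proved first, and the symmetry argument covers both.

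Suppose $a\greenH c$ and $c\cfn b$, with conjugators $g,h$. Then $c\in D_{g,h}$ and $b=hcg=c\phi_{g,h}$. By Lemma~\ref{Lem:domain_facts}(4), $H_c\subseteq D_{g,h}$, so $a\in D_{g,h}$. Set $d=a\phi_{g,h}=hag$. By the remark after the definition of $D_{g,h}$ (via Proposition~\ref{Prp:alternatives}), $a\cfn d$ with conjugators $g,h$. By Theorem~\ref{Thm:H_bijections}, $\phi_{g,h}$ maps $H_c$ bijectively onto $H_b$, and since $a\in H_c$ we get $d\in H_b$, that is, $d\greenH b$. Thus $a\cfn d\greenH b$, so $(a,b)\in\cfn\circ\greenH$, which gives $\greenH\circ\cfn\subseteq\cfn\circ\greenH$.

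There is no real obstacle here. The only points needing care are that $H_c\subseteq D_{g,h}$, so that $\phi_{g,h}$ is defined at $a$, and that the composition convention is handled by the symmetry argument.
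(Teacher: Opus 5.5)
Your proof is correct and is essentially the paper's argument: apply $\phi_{g,h}$ to the $\greenH$-related element, which lies in $D_{g,h}$ by Lemma~\ref{Lem:domain_facts}(4), and use Theorem~\ref{Thm:H_bijections} to land in $H_b$. The only difference is that you obtain the reverse inclusion by taking converses of the symmetric relations $\greenH$ and $\cfn$, whereas the paper just says that inclusion is proved similarly.
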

\begin{proof}
  Say $c\greenH a\cfn b$ and let $g,h\in S^1$ be conjugators for $a,b$. Set $d = (c)\phi_{g,h}$. By Theorem \ref{Thm:H_bijections}, we have $b\greenH d\cfn c$. The other inclusion is similarly proved.
\end{proof}

Now we consider the composition of partial automorphisms.

\begin{prop}
  For $g_i,h_i\in S^1$, $i=1,2$, we have
    \begin{equation}\label{Eq:aut_comp}
    \phi_{g_1,h_1}\phi_{g_2,h_2} \subseteq \phi_{g_1 g_2,h_2 h_1}\,.
    \end{equation}
\end{prop}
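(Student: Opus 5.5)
Since partial maps are identified with their graphs, the inclusion $\phi_{g_1,h_1}\phi_{g_2,h_2} \subseteq \phi_{g_1 g_2,h_2 h_1}$ means two things. First, the domain of the composite must be contained in $D_{g_1g_2,h_2h_1}$. Second, on that domain the two maps must agree. The second point is immediate from the definition: composition is read left to right, so $a\phi_{g_1,h_1}\phi_{g_2,h_2} = h_2(h_1 a g_1)g_2 = (h_2h_1)a(g_1g_2)$, which is $a\phi_{g_1g_2,h_2h_1}$ by associativity. So the whole proof reduces to the domain inclusion.

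For the domain, let $a$ be in the domain of the composite. Then $a\in D_{g_1,h_1}$, and $b := h_1ag_1\in D_{g_2,h_2}$. By Theorem~\ref{Thm:phi_iso} we also have $b\in D_{h_1,g_1}$ and $g_1bh_1=a$. We must show $g_1g_2h_2h_1\cdot a = a = a\cdot g_1g_2h_2h_1$.

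For the left-hand identity, replace $a$ by $g_1bh_1$. Then
\[
g_1g_2h_2h_1 a = g_1 g_2h_2 h_1 g_1 b h_1 = g_1 (g_2h_2\, b) h_1 = g_1 b h_1 = a.
\]
Here the middle step uses $h_1g_1\cdot b=b$ (condition (v) for the pair $a\cfn b$), and the next step uses $g_2h_2\cdot b = b$, which holds because $b\in D_{g_2,h_2}$. The right-hand identity is symmetric:
\[
a\, g_1g_2h_2h_1 = g_1bh_1g_1g_2h_2h_1 = g_1 (b\,g_2h_2) h_1 = g_1bh_1 = a.
\]
This uses $b\cdot h_1g_1=b$ (condition (vii)) and then $b\cdot g_2h_2 = b$.

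There is no real obstacle here. The only point needing care is the substitution $a=g_1bh_1$, which makes the idempotent-like factors $h_1g_1$ land next to $b$ so that they can be absorbed. This substitution is exactly what Theorem~\ref{Thm:phi_iso} supplies, and after it the computation is routine.
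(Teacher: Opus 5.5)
Your proof is correct and follows essentially the paper's route. The paper performs the same computation, written as $g_1g_2h_2h_1\,a = g_1(g_2h_2h_1ag_1)h_1 = g_1h_1ag_1h_1 = a$ and symmetrically on the right; it inserts $g_1h_1$ directly from $a\in D_{g_1,h_1}$, whereas you pass through $a=g_1bh_1$ and $b\in D_{h_1,g_1}$ via Theorem~\ref{Thm:phi_iso}, which is the same calculation since $g_1bh_1 = g_1h_1ag_1h_1$.
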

\begin{proof}
The domain of $\phi_{g_1,h_1}\phi_{g_2,h_2}$ is
\[
C = \{a\in D_{g_1,h_1}\mid h_1 a g_1 \in D_{g_2,h_2}\}\,.
\]
If $a\in C$, then
\begin{align*}
  g_1 g_2 h_2 h_1\cdot a &= g_1 \underbrace{g_2 h_2 h_1 a g_1} h_1
= g_1 h_1 a g_1 h_1 = a \\
\intertext{and}
  a\cdot g_1 g_2 h_2 h_1 &= g_1 \underbrace{h_1 a g_1 g_2 h_2} h_1
= g_1 h_1 a g_1 h_1 = a\,.
\end{align*}
Thus $a\in D_{g_1 g_2,h_2 h_1}$. Clearly
$a \phi_{g_1,h_1}\phi_{g_2,h_2} = a \phi_{g_1 g_2,h_2 h_1}$ for
$a\in C$.
\end{proof}

\begin{example}\label{Exm:proper}
In general, the inclusion \eqref{Eq:aut_comp} can be proper. For instance, in the group $\Z_2$ written additively, the map $\phi_{0,1}$ is the empty map and thus so is $\phi_{0,1}\phi_{0,1}$. However,
$\phi_{0+0,1+1} = \phi_{0,0}$ is the identity map.
\end{example}

\begin{defi}
Let $\Inn(S)$ denote the inverse monoid of partial automorphisms generated by the $\phi_{g,h}$'s. We call $\Inn(S)$ the \emph{partial inner automorphism monoid} of $S$.
\end{defi}

The following remarks are intended to show that the partial inner automorphism monoid of a semigroup is a natural generalization of the inner automorphism group of a group. 

\begin{rem}\label{Rem:cancellative}
	(1) Let $S$ be a cancellative monoid with unit group $U(S)$. For $a,g,h\in S$, $a\in D_{g,h}$ if and only if $gha = a = agh$ if and only if $gh = 1$ (by cancellation), in which case, $h = hgh$ so $hg=1$ as well. Thus if $D_{g,h}\neq \varnothing$, then $g\in U(S)$, $h = g\inv$, and $D_{g,g\inv} = S$. Thus for $g\in U(S)$, $\phi_{g,g\inv}$ is the usual conjugation by $g$, that is, $(x)\phi_{g,g\inv} = g\inv xg$ for all $x\in S$. Since $\phi_{g,g\inv} \phi_{k,k\inv} = \phi_{gk,(gh)\inv}$ for $g,k\in U(S)$, we see that $\Inn(S)$ is the union of the group $\{\phi_{g,g\inv}\,:\,g\in U(S)\}$ and the empty mapping.
	\medskip
	
	\noindent (2) In the special case where $S = U(S)$ is a group, our $\Inn(S)$ is a zero group, the union of the usual inner automorphism group of $S$ and the empty mapping.
	\medskip
	
	\noindent (3) On the other hand, let $S$ be the free monoid on a fixed nonempty set. Then $S$ is cancellative, but $U(S)$ is the trivial group.
	Thus $\Inn(S)$ consists only of the identity mapping and the empty mapping.
\end{rem}

\begin{rem}\label{Rem:leftzero}
	Let $S$ be a nonempty left zero semigroup, that is, $xy=x$ for all $x,y\in S$.
	
	For $g\in S$, note that $D_{1,g} = D_{g,g} = D_{g,1} = \{g\}$. Indeed, if $a\in D_{1,g}$, then $ga = a = ag$, so $D_{1,g} = \{g\}$. The other two cases are proven similarly. We have $(g)\phi_{1,g} = gg1 = g$ and similarly, $(g)\phi_{g,1} = (g)\phi_{g,g} = g$. Therefore $\phi_{g,1} = \phi_{1,g} = \phi_{g,g}$. Thus other than the identity mapping on $S$, which is $\phi_{1,1}$, we may restrict our attention to those $\phi_{g,h}$ with both $g,h\in S$.
	
	For $a,g,h\in S$, if $a\in D_{g,h}$, then $g = gha = a$, and so $D_{g,h}(g) = \{g\}$. Thus $\phi_{g,h}$ is the map from $\{g\}$ to $\{h\}$ since $(g)\phi_{g,h} = hgg = h$. 
		
	For all $g,g,r,s\in S$, we have
	\[
	\phi_{g,h} \phi_{r,s} = \begin{cases} \phi_{g,s} & \text{ if } h = r\\
		\varnothing & \text{ otherwise}
	\end{cases}
	\]
	Therefore $\Inn(S) = \{\phi_{g,h}\,:\, g,h\in S\} \cup \{\id_S\}\cup  \{\varnothing\}$
\end{rem}

\begin{rem}\label{Rem:inverse}
The case where $S$ is an inverse semigroup is studied in detail in \cite{KS19}. It turns out that for any $g,h\in S^1$, $D_{g,h}\subseteq D_{g,g\inv}$. In that case, we may just work with the partial inner automorphisms $\phi_{g,g\inv}$ and for those, the inclusion
\eqref{Eq:aut_comp} is an equality. We then get a homomorphism $\Phi : S\to \Inn(S); g\mapsto \phi_{g,g\inv}$, whose kernel is precisely the central congruence of $S$. In particular, if $S$ is the symmetric inverse semigroup of partial injective transformations on a set $X$, then the homomorphism $\Phi$ is an isomorphism, and so $S\cong\Inn(S)$.
\end{rem}

\begin{example}
It is well known that non-isomorphic groups can have isomorphic automorphism groups (e.g., $Q_8$ and $S_4$ both have automorphism groups isomorphic to $S_4$). The same happens with partial inner automorphisms. The cyclic groups of order $2$ and $3$, both have the $2$-chain as their partial inner automorphism monoid (and the $2$-chain is isomorphic to its own partial inner automorphism monoid).
\end{example}

\begin{example}\label{Exm:centralizers}
In this example we show how partial inner automorphisms can help with understanding the natural conjugacy relation itself. 
An elementary observation in group theory is that if two elements
$a,b$ are conjugate, then every element of the centralizer $C_a$ of $a$
is conjugate to some element of the centralizer $C_b$ of $b$. This is not
true for $\cfn$, even in highly structured semigroups. Consider the
semigroup defined by this table:
\[
\begin{array}{c|cccccccc}
\cdot & e & r_1 & r_2 & s_1 & s_2 & s_3 & f & c \\
\hline
    e   & e   & r_1 & r_2 & s_1 & s_2 & s_3 & e   & s_1 \\
    r_1 & r_1 & r_2 & e   & s_3 & s_1 & s_2 & r_1 & s_3 \\
    r_2 & r_2 & e   & r_1 & s_2 & s_3 & s_1 & r_2 & s_2 \\
    s_1 & s_1 & s_2 & s_3 & e   & r_1 & r_2 & s_1 & e   \\
    s_2 & s_2 & s_3 & s_1 & r_2 & e   & r_1 & s_2 & r_2 \\
    s_3 & s_3 & s_1 & s_2 & r_1 & r_2 & e   & s_3 & r_1 \\
    f   & e   & r_1 & r_2 & s_1 & s_2 & s_3 & f   & c \\
    c   & s_1 & s_2 & s_3 & e   & r_1 & r_2 & c   & f
\end{array}
\]
This is a Clifford semigroup, that is, an inverse semigroup in which
the idempotents (in this case, $e$ and $f$) commute with all elements.
We see that this semigroup is a union (in fact, semilattice) of
the subgroups $A = \{e,r_1,r_2,s_1,s_2,s_3\}$ and $B = \{e,c\}$. Since $s_3^2 = e$, the identity element of $A$, we have that $A\subseteq D_{s_3,s_3}$. Now $(s_1)\phi_{s_3,s_3} = s_3 s_1 s_3 = s_2$, and thus $s_1\cfn s_2$. We see from the table that $C_{s_1} = \{ e, f, s_1, c \}$ and $C_{s_2} = \{ e, f, s_2 \}$. If $gh\cdot c = c = c\cdot gh$, then from the table, $gh = f$, and so $g=h=f$ or $g=h=c$. We compute
$c\phi_{f,f} = c$ and $c\phi_{c,c} = c$. Therefore the $\frn$-conjugacy
class of $c$ is $[c]_{\frn} = \{c\}$, and so $c$ is not $\frn$-conjugate
to any element of $C_{s_2}$.
\end{example}

An element $a$ of a semigroup $S$ is an \emph{epigroup element} (classically, a \emph{group-bound element}) if there exists a positive integer $n$ such that $a^n$ is contained in a subgroup of $S$.
The set of all epigroup elements of $S$ is denoted by $\Epi(S)$.
We can use the machinery above to show that in epigroups, or, more generally, in $\Epi(S)$, we can impose additional
restrictions on conjugators without loss of generality.

For $a\in \Epi_n(S)$, let $a^{\omega}$ denote the identity element of the group $\greenH$-class $H$ of $a^n$. Then $a^{\omega+1}:=aa^{\omega} = a^{\omega}a$ is in $H$. The \emph{pseudo-inverse} $a'$ of $a$ is $a'=(a^{\omega+1})^{-1}$, the inverse of $a^{\omega+1}$ in the group $H$ \cite[(2.1)]{Shevrin}. 

\begin{lemma}{\bf(\hskip-0.2mm\cite[Lem.~4.1]{ArKiKoMaTA})}
\label{Lem:xyx}
    Let $S$ be a semigroup and suppose that $uv, vu\in \Epi(S)$ for some
    $u,v\in S$. Then
    \begin{equation}\label{Eq:(uv)'u=u(vu)'}
          (uv)'u = u(vu)'\,.
    \end{equation}
\end{lemma}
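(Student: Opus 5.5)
Let $e=(uv)^{\omega}$ and $f=(vu)^{\omega}$ be the identities of the group $\greenH$-classes $H_e$ and $H_f$ containing suitable powers of $uv$ and $vu$. The idea is to compare $(uv)'u$ and $u(vu)'$ through the elements $eu$ and $uf$, using the identity $(uv)^k u = u(vu)^k$ for every $k\ge 1$.

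First I would show $eu = uf$.

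\begin{itemize}
\item Choose $n$ large enough that $(uv)^n\in H_e$ and $(vu)^n\in H_f$.
\item Then $e = (uv)^n x$ for some $x\in H_e$, namely $x=((uv)^n)^{-1}$, the inverse in the group $H_e$. Hence
\[
eu = x(uv)^n u = x\,u(vu)^n .
\]
Here we used that $x$ and $(uv)^n$ commute in $H_e$.
\item Since $(vu)^n = (vu)^n f$, this gives $eu = euf$.
\item Symmetrically, writing $f = y(vu)^{n}$ with $y\in H_f$, we get $uf = (uv)^n u\, y$. Using $e(uv)^n=(uv)^n$, this gives $uf = euf$.
\item Combining the two, $eu = euf = uf$.
\end{itemize}

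Next I would bring in the pseudo-inverses. Write $p=(uv)'\in H_e$ and $q=(vu)'\in H_f$. Then
\[
(uv)'u = pu = p\,eu = p\,euf,
\]
and similarly
\[
u(vu)' = uq = uf\,q = euf\,q .
\]
So it suffices to show $p\,(euf) = (euf)\,q$, where $w:=euf=eu=uf$. For this I use the group elements $a=(uv)^{\omega+1}=uv\,e$ and $b=(vu)^{\omega+1}=vu\,f$.

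I compute $aw$ and $wb$:
\[
aw = uv\,e\,uf = uv\,uf = u\,vu\,f = u\,b,
\]
\[
wb = uf\,vu\,f = uf\,b = u\,b .
\]
The second line uses $f\,vu\,f = vu\,f$, which holds because $f$ is the identity of $H_f\ni b$. Hence $aw = wb$. Multiplying on the left by $p=a^{-1}$ and on the right by $q=b^{-1}$ gives
\[
e\,w\,q = p\,w\,f .
\]
Since $ew=w$ and $wf=w$, this reads $wq = pw$, which is exactly the desired identity $(uv)'u = u(vu)'$.

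The main obstacle is the first step, $eu=uf$, and in particular handling the powers and the group inverses correctly. The rest is routine bookkeeping of identities inside $H_e$ and $H_f$.
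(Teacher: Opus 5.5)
Your proof is correct. The derivation of $eu=euf=uf$ from $(uv)^n u=u(vu)^n$ and the group inverses of $(uv)^n\in H_e$ and $(vu)^n\in H_f$ is sound. Conjugating $aw=ub=wb$ by $p=a^{-1}$ and $q=b^{-1}$ then gives $pw=wq$, which is $(uv)'u=u(vu)'$.

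The paper gives no proof of its own here; the lemma is simply quoted from \cite{ArKiKoMaTA}, so there is no in-text argument to compare with. You use two standard facts without stating them:
\begin{itemize}
\item all powers $(uv)^m$ with $m\ge n$ stay in $H_e$, which justifies choosing one common $n$ for $uv$ and $vu$;
\item $(uv)^{\omega+1}=uv\,e=e\,uv\in H_e$, which the paper states just before the lemma.
\end{itemize}

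A shorter route avoids $w$ altogether. Using $uv\,(uv)'=(uv)'\,uv=e$ and $vu\,(vu)'=(vu)'\,vu=f$, one gets, for your $n$,
\begin{align*}
(uv)'u&=((uv)')^{n+1}(uv)^n u=((uv)')^{n+1}u(vu)^{2n+1}((vu)')^{n+1}\\
&=((uv)')^{n+1}(uv)^{2n+1}u((vu)')^{n+1}=(uv)^n u((vu)')^{n+1}=u(vu)'.
\end{align*}
Your two-step version has a compensating advantage: it isolates $(uv)^{\omega}u=u(vu)^{\omega}$. That is exactly the identity $g(hg)^{\omega}=(gh)^{\omega}g$ used without comment in the proof of Theorem~\ref{Thm:inv_conj}.
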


Recall that elements $g,h$ of a semigroup $S$ are mutually inverse if $ghg=g$ and $hgh=h$.

\begin{theorem}\label{Thm:inv_conj}
	Let $S$ be a semigroup and let $g,h\in S^1$ satisfy $gh,hg\in \Epi(S^1)$. Then there exist mutually inverse $\bar{g},\bar{h}\in S^1$ such that $\phi_{g,h}\subseteq \phi_{\bar{g},\bar{h}}$.
\end{theorem}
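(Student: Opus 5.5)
Write $x=gh$ and $y=hg$; by hypothesis both lie in $\Epi(S^1)$. I would take
\[
\bar g=(gh)^{\omega}g=g(hg)^{\omega}\,,\qquad \bar h=(hg)'h=h(gh)'\,,
\]
and show that this pair is mutually inverse and that $\phi_{g,h}\subseteq\phi_{\bar g,\bar h}$.

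The first step is to justify the two equalities in the definitions. The second, $(hg)'h=h(gh)'$, is exactly Lemma~\ref{Lem:xyx} applied with $u=h$, $v=g$. For the first, recall that for an epigroup element $z$ we have $zz'=z'z=z^{\omega}$, since $z'$ lies in the group $\greenH$-class of $z^{\omega}$ and is the inverse of $z^{\omega+1}$ there. Then, using Lemma~\ref{Lem:xyx} with $u=g$, $v=h$ in the form $(gh)'g=g(hg)'$,
\[
(gh)^{\omega}g=gh\,(gh)'g=gh\,g(hg)'=g\,(hg)(hg)'=g(hg)^{\omega}\,.
\]

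Next I check that $\bar g,\bar h$ are mutually inverse, using only group computations inside the $\greenH$-classes of $x^{\omega}$ and $y^{\omega}$:
\[
\bar g\bar h=x^{\omega}\,gh\,(gh)'=x^{\omega}xx'=x^{\omega}\,,\qquad
\bar h\bar g=(hg)'\,hg\,(hg)^{\omega}=y'yy^{\omega}=y^{\omega}\,.
\]
Hence $\bar g\bar h\bar g=x^{\omega}x^{\omega}g=\bar g$ and $\bar h\bar g\bar h=y^{\omega}(hg)'h=\bar h$, the last step because $y'$ lies in the group with identity $y^{\omega}$.

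The main step, and the one needing care, is the inclusion of domains and the agreement of values. Let $a\in D_{g,h}$, so $xa=a=ax$. Iterating gives $x^na=a$ for all $n$; choose $n$ with $x^n$ in the group with identity $x^{\omega}$. Then
\[
x^{\omega}a=x^{\omega}x^na=x^na=a\,,
\]
and symmetrically $ax^{\omega}=a$. Since $\bar g\bar h=x^{\omega}$, this shows $a\in D_{\bar g,\bar h}$.

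For the values, first note that $(gh)'a=x'xa=x^{\omega}a=a$. Therefore
\[
a\phi_{\bar g,\bar h}=\bar h a\bar g=h\,(gh)'a\,x^{\omega}g=h\,a\,x^{\omega}g=hag=a\phi_{g,h}\,.
\]
This gives $\phi_{g,h}\subseteq\phi_{\bar g,\bar h}$. The cases $g=1$ or $h=1$ require no separate treatment, since all computations take place in $S^1$.
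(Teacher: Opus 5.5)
Your proof is correct and is essentially the paper's. You use the same choice $\bar g=(gh)^{\omega}g$, $\bar h=h(gh)'$, the same appeal to Lemma~\ref{Lem:xyx} to get $\bar h\bar g=(hg)^{\omega}$, and the same observation that $(gh)^{\omega}$ acts as an identity on both sides of $a$. The only difference is the last step: the paper checks the conjugator equations (i), (v), (viii) for $a$ and $b=hag$ and then invokes Proposition~\ref{Prp:alternatives}. You instead verify $a\in D_{\bar g,\bar h}$ and $\bar h a\bar g=hag$ directly from the definition of $\phi_{\bar g,\bar h}$, which is slightly shorter and avoids needing $(hg)^{\omega}b=b$.
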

\begin{proof}
Let $g,h\in S^1$. Setting
  \begin{equation}\label{Eq:inv_conj0}
  \bar{g} = (gh)^{\omega}g\quad\text{and}\quad \bar{h} = h(gh)',
  \end{equation}
we obtain:
  \begin{align}
  \bar{g}\bar{h} &= (gh)^{\omega}gh(gh)' = (gh)^{\omega},\label{Eq:inv_conj1}\\
  \bar{h}\bar{g} &= h(gh)'(gh)^{\omega}g = h(gh)'g \overset{\eqref{Eq:(uv)'u=u(vu)'}}{=} hg(hg)' = (hg)^{\omega},\label{Eq:inv_conj2}\\
  \bar{g}\bar{h}\bar{g} &= (gh)^{\omega}(gh)^{\omega}g = (gh)^{\omega}g = \bar{g},\notag\\
  \bar{h}\bar{g}\bar{h} &= h(gh)'(gh)^{\omega} = h(gh)' = \bar{h}\,.\notag
  \end{align}
  Therefore $\bar{g},\bar{h}$ are mutually inverse.

  Now assume $a\phi_{g,h} = b$, that is, $a\cfn b$ with $g,h$ as conjugators.
  We will now show that
  \begin{equation}\label{Eq:inv_conj3}
  (gh)^{\omega} a = a = a(gh)^{\omega} \quad\text{and}\quad (hg)^{\omega} b = b = b(hg)^{\omega}\,.
  \end{equation}
  Indeed, choose $n$ such that $(gh)^n (gh)^{\omega} = (gh)^{n+1} (gh)' = (gh)^n$. Then
  $a (gh)^{\omega} = a(gh)^n\cdot (gh)^{\omega} = a(gh)^n = a$. The other three equations in
  \eqref{Eq:inv_conj3} are proved similarly.

  Now we use \eqref{Eq:inv_conj0}, \eqref{Eq:inv_conj1}, \eqref{Eq:inv_conj2}, and \eqref{Eq:inv_conj3} in the following
  calculations:
  \begin{align*}
  a\bar{g} &= a(gh)^{\omega}g = ag = gb = g(hg)^{\omega}b = (gh)^{\omega}gb = \bar{g}b\,, \\
  \bar{h}\bar{g}\cdot b &= (hg)^{\omega}b = b\,,\,\,\text{and} \\
  a\cdot \bar{g}\bar{h} &= a(gh)^{\omega} = a\,.
  \end{align*}
  By Proposition \ref{Prp:alternatives}, $\bar{g},\bar{h}$ are conjugators for $a,b$, and thus
  $a\phi_{\bar{g},\bar{h}} = b$. This completes the proof.
\end{proof}

\begin{example}\label{Exm:strict}
In general, the conclusion of Theorem \ref{Thm:inv_conj} can be a strict inclusion. For example, consider the semigroup defined by the table
\[
\begin{array}{c|cccc}
\cdot & 1 & 2 & 3 & 4 \\
\hline
1     & 1 & 1 & 4 & 4 \\
2     & 2 & 2 & 3 & 3 \\
3     & 3 & 3 & 2 & 2 \\
4     & 4 & 4 & 1 & 1
\end{array}
\]
Set $g = 1$ and $h = 3$. Then $\bar{g} = 1$ and $\bar{h} = 2$. For $a = 1$, $b = 2$, we have $a\bar{g} = 1 = \bar{g}b$, $a\bar{g}\bar{h} = 1 = a$, $\bar{h}\bar{g}b = 2 = b$. Thus $a\cfn b$ with $\bar{g},\bar{h}$ as conjugators, so $a\phi_{\bar{g},\bar{h}} = b$. However, $agh = 3\neq a$ and so $a\not\in D_{g,h}$.
\end{example}

\begin{cor}\label{Cor:mi_conj}
If $a\cfn b$ in an epigroup $S$, then there exist mutually inverse conjugators for $a,b$.
\end{cor}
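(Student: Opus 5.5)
The corollary will follow from Theorem~\ref{Thm:inv_conj} by checking that its hypotheses hold in an epigroup. Suppose $a\cfn b$ in an epigroup $S$, and fix conjugators $g,h\in S^1$. By Proposition~\ref{Prp:alternatives}, these satisfy all of (i)--(viii), so $a\in D_{g,h}$ and $a\phi_{g,h}=b$. The idea is to replace $g,h$ by the mutually inverse pair $\bar g,\bar h$ produced by Theorem~\ref{Thm:inv_conj}.

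The only hypothesis to verify is $gh,hg\in\Epi(S^1)$. I would handle this in two steps.
\begin{enumerate}
\item The product $gh$ or $hg$ lies in $S$ unless both $g$ and $h$ equal the adjoined identity. In that case the product is $1$, which is an idempotent and so lies in the trivial subgroup $\{1\}$ of $S^1$; hence it belongs to $\Epi(S^1)$.
\item If the product lies in $S$, then it is an epigroup element of $S$, since $S$ is an epigroup. A subgroup of $S$ is also a subgroup of $S^1$, so the product lies in $\Epi(S^1)$.
\end{enumerate}
If $S$ is already a monoid, then $S^1=S$ and step~2 covers every case.

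With the hypothesis checked, Theorem~\ref{Thm:inv_conj} supplies mutually inverse $\bar g,\bar h\in S^1$ with $\phi_{g,h}\subseteq\phi_{\bar g,\bar h}$. In particular $a\in D_{\bar g,\bar h}$ and $a\phi_{\bar g,\bar h}=b$. By the remark following the definition of $D_{g,h}$, this means $a\cfn b$ with conjugators $\bar g,\bar h$, which is the claim. The only delicate point is the adjoined identity, and it is handled by step~1.
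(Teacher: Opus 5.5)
Your proposal is correct and takes the same approach as the paper. The paper states the corollary without proof, as an immediate consequence of Theorem~\ref{Thm:inv_conj}: in an epigroup, $gh,hg\in\Epi(S^1)$, so the resulting mutually inverse $\bar g,\bar h$ are conjugators for $a,b$. Your explicit check of the adjoined-identity case is a detail the paper leaves unstated.
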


For the remainder of the paper, we will describe partial inner automorphisms in specific classes of semigroups: completely simple semigroups, full transformation monoids, and  $G$-sets, where $G$ is an abelian group. As will be seen, computing partial inner automorphisms in general presents a demanding challenge.

\section{The partial inner automorphism monoid of a completely simple semigroup}
\label{Sec:CS}

Every completely simple semigroup is isomorphic to a Rees matrix semigroup and so we assume at the outset that our semigroups have this form.

\begin{lemma}
	Let $\Gamma$ be a group, $I$ and $\Lambda$ two nonempty sets, and $P$ a $\Lambda\times I$ matrix with entries in $\Gamma$. Let $\mathcal{M}(\Gamma;I,\Lambda; P)$ be the Rees matrix semigroup induced by $\Gamma$, $I$, $\Lambda$ and $P$. Let $(G,g,\gamma), (H,h,\eta)\in \mathcal{M}(\Gamma;I,\Lambda; P)$. Then
	\[
	D_{(G,g,\gamma), (H,h,\eta)}\neq \varnothing\ \iff\ h = (p_{\eta, G} \ g\ p_{\gamma, H})^{-1}
	\]
	and
	\[
	D_{(G,g,\gamma), (H,(p_{\eta G}\  g\ p_{\gamma, H})^{-1},\eta)} =\{G\}\times \Gamma \times \{\eta\}.
	\]
\end{lemma}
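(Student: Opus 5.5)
Throughout I use the Rees matrix multiplication $(i,x,\lambda)(j,y,\mu) = (i, x\,p_{\lambda j}\,y, \mu)$, consistent with the notation $p_{\eta,G}$ in the statement. The plan is a direct computation.

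First compute the product of the two conjugators:
\[
(G,g,\gamma)(H,h,\eta) = (G,\ g\,p_{\gamma,H}\,h,\ \eta).
\]
Write $u = g\,p_{\gamma,H}\,h$, so that $gh$ in the definition of $D$ becomes $e:=(G,u,\eta)$. For an arbitrary $a=(i,x,\lambda)$, expand the two conditions defining membership in $D$:
\[
e\,a = (G,\ u\,p_{\eta,i}\,x,\ \lambda), \qquad a\,e = (i,\ x\,p_{\lambda,G}\,u,\ \eta).
\]
Comparing first and third coordinates in $e\,a = a$ forces $i=G$. Comparing them in $a\,e = a$ forces $\lambda=\eta$. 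With these substituted, the middle coordinates give $u\,p_{\eta,G}\,x = x$ and $x\,p_{\eta,G}\,u = x$. Cancelling $x$ in the group $\Gamma$, both conditions are equivalent to $u = p_{\eta,G}^{-1}$.

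Hence $D\neq\varnothing$ if and only if $g\,p_{\gamma,H}\,h = p_{\eta,G}^{-1}$, that is, $h = (p_{\eta,G}\,g\,p_{\gamma,H})^{-1}$. Once $h$ has this value, every triple $(G,x,\eta)$ with $x\in\Gamma$ satisfies both equations. This gives $D = \{G\}\times\Gamma\times\{\eta\}$, which is the second assertion.

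The only step needing care is solving $u\,p_{\eta,G}=1$ for $h$. We have $g\,p_{\gamma,H}\,h\,p_{\eta,G}=1$, so $h = p_{\gamma,H}^{-1}\,g^{-1}\,p_{\eta,G}^{-1} = (p_{\eta,G}\,g\,p_{\gamma,H})^{-1}$. The order of the factors must be tracked correctly here. Apart from that there is no real obstacle.

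The statement restricts to conjugators in $S$ rather than $S^1$, so the adjoined identity does not need to be treated.
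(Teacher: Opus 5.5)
Your proof is correct and uses the same direct computation with the Rees matrix product as the paper. The only difference is that you get $D=\{G\}\times\Gamma\times\{\eta\}$ by observing that the middle-coordinate condition $u\,p_{\eta,G}=1$ does not depend on $x$. The paper instead deduces that equality from the inclusion $D\subseteq\{G\}\times\Gamma\times\{\eta\}$ together with the $\mathcal{H}$-class closure in Lemma~\ref{Lem:domain_facts}(4).
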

\begin{proof}
	Regarding the equivalence, we start by proving the direct implication and the second equality. Let $(A,a,\alpha)\in \mathcal{M}(\Gamma;I,\Lambda; P)$ such that
	\[
	(G,g,\gamma) (H,h,\eta)(A,a,\alpha) = (A,a,\alpha) =  (A,a,\alpha) (G,g,\gamma) (H,h,\eta).
	\]
	Then $A=G$ and $\alpha = \eta$ so that
	\[
	D_{(G,g,\gamma), (H,h,\eta)}\subseteq \{G\}\times \Gamma \times \{\eta\}
	\]
	and hence the two sets are equal (by Lemma \ref{Lem:domain_facts}(4)). This proves the last equality in the statement of the lemma.
	
	Now, from $(G,g,\gamma) (H,h,\eta)(G,a,\eta)=(G,a,\eta)$, we get $g\ p_{\gamma, H} \ h\  p_{\eta, G}\ a = a$, that is,
	$h  = (p_{\eta, G} \ g\ p_{\gamma, H})^{-1}$. The direct implication is proved.
	
	For the converse implication, let $h=(p_{\eta, G} \ g\ p_{\gamma, H})^{-1}$ and $(G,a,\eta)\in {\mathcal M}(G;I,\Lambda; P)$. Then
	\[
	(G,g,\gamma) (H, p^{-1}_{\gamma, H}g^{-1}p^{-1}_{\eta, G},\eta)(G,a,\eta)=(G,a,\eta)
	\]
	and similarly
	\[
	(G,a,\eta)(G,g,\gamma) (H, p^{-1}_{\gamma, H}g^{-1}p^{-1}_{\eta, G},\eta)=(G,a,\eta).
	\]
	It is proved that $D_{(G,g,\gamma), (H,h,\eta)}\neq \varnothing$ and the lemma follows.
\end{proof}

Now we can state the main result of this section.

\begin{theorem}\label{Thm:RM}
	Let $\Gamma$ be a group, $I$ and $\Lambda$ two nonempty sets, and $P$ a $\Lambda\times I$ matrix with entries in $\Gamma$. Let $\mathcal{M}(\Gamma;I,\Lambda; P)$ be the Rees matrix semigroup induced by $\Gamma$, $I$, $\Lambda$ and $P$. Then the monoid $\Inn(\mathcal{M}(\Gamma;I,\Lambda; P))$ is generated by the following maps and corresponding inverses:
	\[
	\begin{array}{rccl}
		\phi_{(G,g,\gamma), (H, (p_{\eta, G} \ g\ p_{\gamma, H})^{-1},\eta)}:&\{G\}\times \Gamma\times \{\eta\}&\to& \{H\}\times \Gamma\times \{\gamma\}\\
		&(G,a,\eta)                                     &\mapsto & (H, (gp_{\gamma,H})^{-1}\ a\ (p_{\eta,G}\ g),\gamma),
	\end{array}
	\]
	for $g\in \Gamma$, $G,H\in I$ and $\gamma,\eta\in \Lambda$.
\end{theorem}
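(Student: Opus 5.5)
The plan is to use the definition of $\Inn(S)$ directly. For $S=\mathcal{M}(\Gamma;I,\Lambda;P)$, the monoid $\Inn(S)$ is generated by all maps $\phi_{x,y}$ with $x,y\in S^1$, together with their inverses. So it suffices to sort the pairs $(x,y)\in S^1\times S^1$ into cases and show that each $\phi_{x,y}$ either appears in the list of the statement or is the identity of the monoid. Since $S$ is not a monoid, $S^1=S\cup\{1\}$, and the cases are: $x=y=1$; exactly one of $x,y$ equal to $1$; and $x,y\in S$.

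For $x=y=1$ we have $D_{1,1}=S$, and $\phi_{1,1}$ is the identity map, which is the identity of the monoid.

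For $x=1$ and $y=(H,h,\eta)\in S$, take $a=(A,a',\alpha)\in D_{1,y}$, so that $ya=a=ay$. Comparing indices forces $A=H$ and $\alpha=\eta$. Comparing group entries then gives $h\,p_{\eta,H}\,a'=a'$, so $h=p_{\eta,H}^{-1}$. Thus $y$ is the identity of the group $\greenH$-class $\{H\}\times\Gamma\times\{\eta\}$. In this case $D_{1,y}$ is that $\greenH$-class, and $\phi_{1,y}\colon a\mapsto ya=a$ is the identity on it. The case $y=1$ is symmetric. I will check that this partial identity is one of the listed maps by taking $G=H$, $\gamma=\eta$ and $g=p_{\eta,G}^{-1}$ in the statement. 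Then $(p_{\eta,G}\,g\,p_{\gamma,H})^{-1}=p_{\eta,H}^{-1}$, and the displayed formula becomes $a\mapsto (p_{\eta,G}^{-1}p_{\eta,G})^{-1}\,a\,(p_{\eta,G}p_{\eta,G}^{-1})=a$.

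For $x=(G,g,\gamma)$ and $y=(H,h,\eta)$ in $S$, the preceding lemma does the main work. Either $D_{x,y}=\varnothing$, or $h=(p_{\eta,G}\,g\,p_{\gamma,H})^{-1}$ and $D_{x,y}=\{G\}\times\Gamma\times\{\eta\}$. In the nonempty case I will compute $\phi_{x,y}$ directly:
\[
(H,h,\eta)(G,a,\eta)(G,g,\gamma)=(H,\;h\,p_{\eta,G}\,a\,p_{\eta,G}\,g,\;\gamma).
\]
Here $h\,p_{\eta,G}=(g\,p_{\gamma,H})^{-1}$, so this is exactly the displayed formula. Its image is $\{H\}\times\Gamma\times\{\gamma\}$, as Theorem~\ref{Thm:phi_iso} predicts: the image is $D_{y,x}$, which is again an $\greenH$-class by the lemma. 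The same theorem gives $\phi_{x,y}^{-1}=\phi_{y,x}$, which explains why the statement adds "corresponding inverses" even though these inverses are themselves of the listed form.

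The only delicate point is the empty map, which arises as $\phi_{x,y}$ whenever $D_{x,y}=\varnothing$. I will observe that it is usually a product of listed maps with mismatched domain and image, for instance two maps whose $\greenH$-classes do not match. In the degenerate situations, such as $|I|=|\Lambda|=1$, where this may fail, I will record the empty map explicitly as the zero, an element of every such generated monoid, and reading "generated" accordingly. Apart from this bookkeeping, the main computation is the short product above, and the rest is case analysis.
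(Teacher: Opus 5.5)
Your argument is correct and follows the paper's route. The paper gives no separate proof: the theorem is read off from the preceding lemma (a nonempty $D_{x,y}$ is the $\mathcal{H}$-class $\{G\}\times\Gamma\times\{\eta\}$) together with exactly your computation of $yax$, and your treatment of $\phi_{1,y}$, $\phi_{x,1}$ and of the empty map as a product of listed maps with mismatched $\mathcal{H}$-classes covers cases the paper passes over.

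You should state the degenerate case $|I|=|\Lambda|=1$, $|\Gamma|\ge 2$ more bluntly. There $S$ is a nontrivial group, so in fact $S^1=S$; every listed map is total, yet the empty map $\phi_{x,y}$ with $D_{x,y}=\varnothing$ lies in $\Inn(S)$. So the empty map is not in the monoid generated by the listed maps, and the statement itself must be amended to include $\varnothing$. This is a defect of the theorem, not bookkeeping, and your phrase ``an element of every such generated monoid'' is false.
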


\section{The partial inner automorphism monoid of $T(X)$}
\label{Sec:mappingT(X)}
Computing the partial inner automorphisms of a given semigroup is a challenge in itself. We already observed that the symmetric inverse monoid is isomorphic to its inverse monoid of partial inner automorphisms (Remark \ref{Rem:inverse}). In this section, we describe the partial inner automorphism monoid $S=\Inn(T(X))$ for the full transformation monoid of a set $X$. It turns out that the structure of $S$ is essentially isomorphic to the combination of two components, one of which is the symmetric inverse monoid on $X$. The other component consists of bijections between partitions of $X$ with the same number of parts. In the same way that the partial composition operation of the symmetric inverse monoid is based on the intersection of an image and a domain, the operation of the second component is based on the join $\vee$ of two partitions.

In the above description, we write ``essentially" for two reasons. The two components are not entirely  independent, but are required to be compatible which each other in a natural way. In addition, further small adjustments are needed. In the case of finite $X$, the number of elements of $\Inn(T(X))$ that are affected by these adjustments are small relative to the size of $S$.

Throughout this section, we will blur the distinction between partitions and their corresponding equivalence relations.

\begin{theorem}\label{t:PI-sets}
Let $g,h\in T(X)$ and $D_{g,h}$ be as defined above, that is,
\[
D_{g,h}= \{ x\in T(X)\,:\, ghx = xgh = x \}\,.
\]
Then there exist a partition $P$ of $X$ and a partial section $I$ of $P$ such that $D_{g,h}$ consists of all transformations $t$ with $\ima t\subseteq I$ and $P\subseteq \ker t$. Moreover, $I, P$ can be chosen so that every singleton part $S$ of $P$ satisfies $S\subseteq I$.

The partial section $I$ is uniquely determined by $D_{g,h}$, and if $D_{g,h}$ contains more than one transformation, then $P$ is
uniquely determined by $D_{g,h}$ as well.

Conversely, suppose that $P$ is a partition of $X$ and $I$ is a partial section of $P$ such that all singleton parts of $P$ intersect $I$. Then there exist $g,h \in T(X)$ such that $D_{g,h}$ consists of all transformations $t\in T(X)$ with $\ima t\subseteq I$ and $P\subseteq \ker t$.

In the above cases, if $|I|\geq 2$, then $I,P$ uniquely determine $D_{g,h}$, while if $|I|\le 1$, then $I$ uniquely determines $D_{g,h}$.
\end{theorem}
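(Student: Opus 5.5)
The plan is to reduce everything to a single transformation $e = gh\in T(X)$ and read off $D_{g,h}$ from the functional graph of $e$. Maps are written on the right and composed left to right, as in $a\phi_{g,h}=hag$. Under this convention the condition $x\cdot e = x$ says $(y)x\,e = (y)x$ for all $y$, that is, $\ima x\subseteq \operatorname{Fix}(e)$. The condition $e\cdot x = x$ says $(ye)x = (y)x$ for all $y$, that is, $x$ identifies $y$ with $ye$ for every $y$.

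So let $P$ be the equivalence relation generated by the pairs $(y, ye)$; its classes are the connected components of the functional graph of $e$. Let $I=\operatorname{Fix}(e)$. Then $D_{g,h}=\{t : \ima t\subseteq I,\ P\subseteq\ker t\}$ directly from the two conditions above.

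\emph{$I$ is a partial section of $P$.} Each connected component of a functional graph contains at most one cycle, and a fixed point is a cycle of length one. Hence each part of $P$ meets $I$ in at most one point.

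\emph{Singleton parts lie in $I$.} If $\{y\}$ is a part of $P$, then $ye$ lies in the same part, so $ye=y$ and $y\in I$. This gives the existence statement together with the ``moreover'' clause.

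\emph{Uniqueness.} Suppose $D_{g,h}$ is described by some pair $I,P$ as in the statement.
\begin{itemize}
\item For every $i\in I$, the constant map with value $i$ lies in $D_{g,h}$. Hence $I=\bigcup_{t\in D_{g,h}}\ima t$, so $I$ is determined by $D_{g,h}$. In particular, $I=\varnothing$ exactly when $D_{g,h}=\varnothing$.
\item If $|I|\le 1$, then $D_{g,h}$ is empty or consists of the single constant map, so $I$ determines $D_{g,h}$. Conversely, $|D_{g,h}|>1$ forces $|I|\ge 2$.
\item When $|I|\ge 2$, we recover $P=\bigcap_{t\in D_{g,h}}\ker t$. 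Given two distinct parts $B\ne B'$, pick $i_1\neq i_2$ in $I$ and take $t$ sending $B$ to $i_1$ and every other part to $i_2$. This $t$ lies in $D_{g,h}$ and separates $B$ from $B'$.
\item The same observations show that $I,P$ determine $D_{g,h}$ when $|I|\ge2$; this is immediate from the displayed description anyway.
\end{itemize}

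\emph{Converse.} Given $P$ and $I$ with every singleton part meeting $I$, we build $e\in T(X)$ whose components are exactly the parts of $P$ and whose fixed-point set is exactly $I$. Then we take $g=e$ and $h=1$ (the identity of $T(X)$), so that $gh=e$.
\begin{itemize}
\item On a part $B$ meeting $I$ in the point $i$, let $e$ map all of $B$ to $i$. This component is connected and has exactly the fixed point $i$.
\item On a part $B$ disjoint from $I$, we have $|B|\ge 2$ by hypothesis. Choose distinct $u,v\in B$, swap them, and send all other elements of $B$ to $u$. This component is connected and has no fixed point, only a 2-cycle. This works equally for infinite $B$.
\end{itemize}
By the first part of the argument, $D_{g,h}$ then has the required form.

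The only point needing care is the bookkeeping of the left-to-right composition convention, so that $\ima$ and $\ker$ are attached to the correct side. The rest is routine.
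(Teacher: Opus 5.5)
Your proof is correct and follows the paper's argument essentially step for step. Both reduce to $e=gh$, take $I$ to be the fixed-point set of $e$ and $P$ the components of its functional graph, recover $I$ as the union of images of members of $D_{g,h}$, and separate parts of $P$ by two-valued maps into $I$. The converse uses the same ``collapse onto $i$ / 2-cycle with a tail'' transformation (you place it in $g$ rather than $h$, which is immaterial). Writing $P=\bigcap_{t\in D_{g,h}}\ker t$ is just a tidier packaging of the paper's 2-partition argument, and it uses the same test maps.
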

\begin{proof}
Assume first that $g,h \in T(X)$, and let $D=D_{g,h}$. Clearly $D$ only depends on the product $p=gh$.

Let $I\subseteq X$ be the set of points fixed by $p$, and let $P$ be the collection of connected components of the function graph of $p$. In each part of $P$, there is at most a single point $x$ with $xp = x$, and so $I$ is a partial section of $P$. If $\{x\}$ is a singleton part of $P$ for some $x\in X$, then $xp=x$, and so $\{x\}\subseteq I$.

Let $t\in D_{g,h}$. Because $tp=t$, $p$ acts as the identity on the image of $t$ and so $t$ maps into $I$. Because $pt=t$, if $xp=y$, then $yt=x(pt)=xt$, and so $(x,y)\in\ker t$. It follows that the connected component of $x$ in the function graph of $p$ is contained in the kernel of $t$. Hence $P \subseteq \ker t$.

Conversely, if $t\in T(X)$ maps into $I$ and $P\subseteq\ker t$, it is straightforward to check that $pt=tp=t$, and so $t\in D$. It follows that $D$ consists of all $t$ with $\ima t\subseteq I$ and $P\subseteq \ker t$.

Now, let $I$ and $P$ be any set and partition that characterize $D$ in this way. Then $I$ is the union of all images of transformations in $D$, and hence is uniquely determined by $D$. If $|D|\geq 2$, then $|I|\geq 2$ and $|P|\geq 2$, the latter because $I$ is a partial section of $P$. Suppose that $P \in \{P_1, P_2\}$, where $P_1,P_2$ are two distinct partitions of $X$, each with at least two parts. Then without loss of generality $P_1$ is a refinement of a $2$-partition $P'$ of $X$ that does not contain $P_2$. Because $|I|\geq 2$, there exists a $t\in T(X)$ with $\ima t\subseteq I$ and $\ker t = P'\supseteq P_1$, but $P_2 \not\subseteq P'=\ker t$. It follows that $P$ is uniquely determined by $D$.

Now suppose that $P$ is a partition of $X$ and $I$ is a partial section of $P$ such that all singleton parts of $P$ are contained in $I$.

Let $g\in T(X)$ be the identity, and define $h\in T(X) $ as follows: if $x\in X$ is in a part $B$ of $P$ intersecting $I$, then let
$xh=y$ were $y$ is the unique element of $B\cap I$. If $B$ is a part of $P$ not intersecting $I$ then $|B| \ge 2$. Pick $b_1\neq b_2\in B$, and
let $b_1 h = b_2$, $xh = b_1$ for $x\in B\setminus\{b_1\}$. Applying the construction in the first part of the proof  to $D_{g,h}$, it is straightforward
to verify that we recover the sets $I$ and $P$. Hence $D_{g,h}$ contains all transformations $t$ with $\ima t\subseteq I$ and $P \subseteq \ker t$.

The final uniqueness result now also follows from the first part for $|I|\geq 2$, and is trivial for $|I|\leq 1$.
\end{proof}

For any $X$-partition $P$ and $I \subseteq X$, we will use the notation $D_{P,I}$ to refer to the set of $t\in T(X)$ with $\ima t\subseteq I$, $P\subseteq\ker t$, where we also include such $I,P$ in which $I$ is not a partial section of $P$, or for which not all singleton parts of $P$ intersect $I$.

%

\begin{lemma}\label{l:alpha}
Let $D_{g,h}= D_{P,I}$ and $D_{h,g}=D_{P',I'}$. Then $g|_I: I\to I'$ $,h|_{I'}:I'\to I$ are mutually inverse bijections.
\end{lemma}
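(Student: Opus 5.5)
Our plan is to use the explicit description of $I$ and $I'$ obtained in the proof of Theorem~\ref{t:PI-sets}. That proof shows that $I$ is determined by $D_{g,h}$, namely as the union of the images of transformations in $D_{g,h}$. It also shows that this union is exactly the set of fixed points of $p=gh$. Applying the same reasoning to $D_{h,g}$ identifies $I'$ as the fixed-point set of $hg$. Composition is written left to right, as in the paper, so $x(gh)$ means apply $g$ first and then $h$. Hence
\[
I=\{x\in X: xgh=x\},\qquad I'=\{y\in X: yhg=y\}.
\]

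First we check that $g$ maps $I$ into $I'$. If $x\in I$, then $(xg)hg=(xgh)g=xg$, so $xg\in I'$. Symmetrically, for $y\in I'$ we have $(yh)gh=(yhg)h=yh$, so $yh\in I$. Thus $g|_I\colon I\to I'$ and $h|_{I'}\colon I'\to I$ are well defined.

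Next we check that the two restrictions are mutually inverse. For $x\in I$, $x(g|_I)(h|_{I'})=xgh=x$. For $y\in I'$, $y(h|_{I'})(g|_I)=yhg=y$. Hence both compositions are identities, and $g|_I$ and $h|_{I'}$ are mutually inverse bijections.

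The only delicate point is justifying that the $I$ in the hypothesis $D_{g,h}=D_{P,I}$ really is the fixed-point set of $gh$, since $I$ is given abstractly. We handle this through the uniqueness clause of Theorem~\ref{t:PI-sets}: $I$ is the union of the images of the elements of $D_{g,h}$, and in the construction $I$ equals $\mathrm{Fix}(gh)$. The hypothesis allows $(P,I)$ that are not partial sections, so we argue directly instead. Every point fixed by $gh$ lies in the image of some element of $D_{g,h}$, for example the constant map onto that point, which belongs to $D_{g,h}$. Conversely, every image point of an element $t\in D_{g,h}$ is fixed by $gh$, because $t\,gh=t$. Therefore the union of the images is exactly $\mathrm{Fix}(gh)$.

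For $D_{P,I}$ the union of the images is $I$ as long as $D_{P,I}$ contains the constant maps with value in $I$, which holds since $P\subseteq X\times X=\ker$ of a constant map. This gives $I=\mathrm{Fix}(gh)$, and the same argument gives $I'=\mathrm{Fix}(hg)$.
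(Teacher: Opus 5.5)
Your proof is correct, but it takes a slightly different route from the paper. You first make the abstract parameters concrete. Using constant maps, the union of the images of elements of $D_{P,I}$ is $I$, and the union of the images of elements of $D_{g,h}$ is $\mathrm{Fix}(gh)$. Hence $I=\mathrm{Fix}(gh)$ and $I'=\mathrm{Fix}(hg)$, and the lemma reduces to the elementary fact that $g$ and $h$ restrict to mutually inverse bijections between $\mathrm{Fix}(gh)$ and $\mathrm{Fix}(hg)$.

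The paper never names the fixed-point sets. It takes a test transformation $t\in D_{g,h}$ with $\ima t=I$ and gets $\ima(ht)=I$ from $ght=t$. It then uses that $\phi_{g,h}$ carries $D_{g,h}$ into $D_{h,g}$ (Theorem~\ref{Thm:phi_iso}) to conclude $Ig=\ima(htg)\subseteq I'$. The identity $tgh=t$ shows that $gh$ fixes $I$, and the remaining claims follow dually.

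Your version is self-contained and does not need Theorem~\ref{Thm:phi_iso}. It also works for arbitrary $(P,I)$, as the $D_{P,I}$ notation permits. The paper's test map (defined by $[j]_Pt=j$ for $j\in I$) is well defined only when $I$ is a partial section of $P$, though constant maps would have sufficed there as well. In exchange, the paper's argument follows the same ``push a test transformation through $\phi_{g,h}$'' template that it reuses in Lemma~\ref{l:beta} for the partition data.
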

\begin{proof}
The result is clear if $I=\varnothing$. Otherwise, pick $i \in I$, and define $t \in T(X)$ by
$[j]_Pt=j$ for $j \in I$, $xt=i$ otherwise. Clearly, $t \in D_{g,h}$ and  $ \ima t =I$.
Because $ght=t$, $\ima (ht)=I$, and because $htg \in D_{P',I'}$, we see that $g|_I$ maps into $I'$. Dually, $h_{I'}$ maps into $I$.

Because $t\in D_{g,h}$, $tgh=t$, and so $gh$ acts as the identity on the image $I$. Applying the
argument to a correspondingly constructed element $t'\in D_{h,g}$, we get that $hg$ is the identity on $I'$. The result follows.
\end{proof}

\begin{lemma}\label{l:beta}
Let $D_{g,h}= D_{P,I}$, $D_{h,g}=D_{P',I'}$ with $|I|\ge 2$ (and therefore $|I'| \ge 2$, by the previous lemma).

Then, $\hat{g}: P \to P'$, given by $[p]_P \hat{g} = [pg]_{P'}$, and $\hat{h}: P'\to P$, given by
$[p']_{P'} \hat{h} = [p'h]_{P}$, are well-defined mutually inverse bijections.

Moreover, for all $B \in P$, $B' \in P'$, we get $B\cap I = \varnothing \iff B\hat{g}\cap I'= \varnothing$
and $B' \cap I' = \varnothing \iff B'\hat{h} \cap I= \varnothing$.
\end{lemma}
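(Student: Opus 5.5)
The plan is to reduce everything to one characterization of the partitions in terms of the transformations in the domains, and then to use the bijections $\phi_{g,h}\colon D_{g,h}\to D_{h,g}$ and $\phi_{h,g}$ from Theorem~\ref{Thm:phi_iso} to move between $D_{P,I}$ and $D_{P',I'}$.

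\textbf{Step 1 (kernel characterization).} For $x,y\in X$ we claim
\[
(x,y)\in P \iff xt=yt \text{ for all } t\in D_{P,I},
\]
and similarly for $P'$ and $D_{P',I'}$. The forward direction is immediate, since $P\subseteq\ker t$ for every $t\in D_{P,I}$. For the converse, suppose $x,y$ lie in different parts of $P$. Take the $2$-partition $Q=\{B,\,X\setminus B\}$, where $B$ is the $P$-part of $x$; then $Q\supseteq P$ and $Q$ separates $x$ from $y$. Since $|I|\ge 2$ we can pick distinct $i\ne j$ in $I$ and define $t$ to send $B$ to $i$ and $X\setminus B$ to $j$. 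Then $t\in D_{P,I}$ and $xt\ne yt$. This is essentially the argument used for uniqueness in Theorem~\ref{t:PI-sets}, and I expect it to be the only step needing care. It is exactly where the hypothesis $|I|\ge2$ (and hence $|I'|\ge2$, by Lemma~\ref{l:alpha}) is used.

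\textbf{Step 2 (well-definedness).} Let $(x,y)\in P$, and take any $s\in D_{h,g}=D_{P',I'}$. Then $t:=gsh=s\phi_{h,g}$ lies in $D_{g,h}$ by Theorem~\ref{Thm:phi_iso}, so $xgsh=ygsh$. Now $xgs$ and $ygs$ both lie in $\ima s\subseteq I'$, and $h|_{I'}$ is injective by Lemma~\ref{l:alpha}. Hence $xgs=ygs$. Since $s$ was arbitrary, Step~1 gives $(xg,yg)\in P'$, so $\hat g$ is well defined. The symmetric argument, with $hsg=s\phi_{g,h}\in D_{h,g}$ for $s\in D_{g,h}$, shows that $\hat h$ is well defined.

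\textbf{Step 3 (mutually inverse).} We have $[p]_P\hat g\hat h=[pgh]_P$. For every $t\in D_{g,h}$ we have $ght=t$, so $(pgh)t=pt$; Step~1 then gives $(pgh,p)\in P$. Dually, $hgt'=t'$ for $t'\in D_{h,g}$ gives $\hat h\hat g=\id_{P'}$. So $\hat g$ and $\hat h$ are mutually inverse bijections.

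\textbf{Step 4 (parts meeting $I$).} If $i\in B\cap I$, then $ig\in I'$ by Lemma~\ref{l:alpha}, and $ig\in[i]_{P'}$ for $[i]_{P'}$ taken as $B\hat g$, i.e.\ $ig\in B\hat g$. Hence $B\hat g\cap I'\ne\varnothing$. Conversely, if $B\hat g\cap I'\ne\varnothing$, the same argument applied to $\hat h$ shows that $B=B\hat g\hat h$ meets $I$. The statement for $B'\in P'$ is symmetric.
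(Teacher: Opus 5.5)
Your proof is correct and follows essentially the paper's route: you separate parts of $P$ (resp.\ $P'$) by two-valued transformations into $I$ (your Step~1, which the paper uses inline), pull $s\in D_{h,g}$ back to $gsh\in D_{g,h}$ and cancel via injectivity of $h|_{I'}$ from Lemma~\ref{l:alpha}, and read the last claim off Lemma~\ref{l:alpha}. The only blemish is the wording in Step~4, which should read $ig\in[ig]_{P'}=[i]_P\hat g=B\hat g$ rather than $ig\in[i]_{P'}$.
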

\begin{proof}
Pick distinct $i,j \in I$, and $[p] \in P$. Define $t\in T(X)$ by $[p]_Pt=j, xt=i$ otherwise. Clearly, $t \in D_{g,h}=D_{P,I}$.
Because $j=pt=p(ght)$ we see that $p(gh) \in [p]_P$, and therefore $[p]_P(gh) \subseteq [p]_P$.

Suppose that $p_1,p_2 \in [p]_P$  are such that $[p_1g]_{P'} \neq [p_2 g ]_{P'}$. Let $t'\in D_{h,g}$ be a transformation that maps
$[p_1g]_{P'}, [p_2 g ]_{P'}$ to distinct elements $i'_1,i'_2 \in I'$ (such $t'$ clearly exists). Then $gt'h \in D_{g,h}=D_{P,I}$,
and therefore $i'_1h=p_1gt'h=p_2gt'h=i'_2h$, which contradicts the injectivity of $h|_{I'}$. It follows that $\hat{g}$ is well-defined.
A dual argument shows the corresponding claim for $\hat{h}$.

We already have seen that $p(gh)\in [p]_P$, and so $[p]_P \hat{g} \hat{h} = [p]_P$. As $[p]_P$ was arbitrary, we see that $\hat{g} \hat{h}$
acts as the identity on $P$. An analogous argument shows that $\hat{h} \hat{g}$ is the identity on $P'$, and hence $\hat{g}$ and $\hat{h}$
are inverse bijections.

The last claim follows from Lemma~\ref{l:alpha}.
\end{proof}

We now can derive a classification theorem for the generating elements  $\phi_{g,h}$ of the partial inner automorphism monoid.

\begin{theorem}\label{t:generators}
The partial inner automorphisms of $T(X)$ having the form $\phi_{g,h}$, and acting on more than one transformation are in bijective correspondence with the
tuples $(P, P',I, I', \alpha, \beta)$, where
\begin{itemize}
\item $P$ and $P'$ are partitions of $X$, with $|P|=|P'|$;
\item $I$ and $I'$ are partial sections, of $P$ and $P'$, respectively, with $|I|=|I'|\ge 2$, and intersecting all singleton sets of $P,P'$, respectively;
\item $\alpha: I \to I'$ is a bijection;
\item $\beta: P \to P'$ is a bijection extending the partial bijection between $P$ and $P'$ induced by $\alpha$
\end{itemize}
such that
\begin{itemize}
\item the domain of $\phi_{g,h}$ consists of all transformations $t \in T(X)$ with $\ima t \subseteq I$, $P \subseteq \ker t$;
\item the image of $\phi_{g,h}$ consists of all transformations $t \in T(X)$ with $\ima t \subseteq I'$, $P' \subseteq \ker t$;
\item
 Given $t$ in the domain of $\phi_{g,h}$, and $x \in X$, we have
$(x)(t\phi_{g,h})= i \alpha$, where $i\in I$ is the unique element in $(([x]_{P'})\beta ^{-1})t$.
\end{itemize}
The partial inner automorphisms of $T(X)$ having the form $\phi_{g,h}$ and acting on at most one transformation consist of all functions mapping one constant transformation on $X$ to another, and (for $|X|\neq 1$), the empty mapping.
\end{theorem}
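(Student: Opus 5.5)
We split the proof into two directions, plus the degenerate case.

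\emph{From $\phi_{g,h}$ to a tuple.} Given $\phi_{g,h}$ acting on at least two transformations, Theorem~\ref{t:PI-sets} gives $D_{g,h}=D_{P,I}$ and $D_{h,g}=D_{P',I'}$. Here $I,I'$ are partial sections meeting all singleton parts, and by the uniqueness part of that theorem they are uniquely determined. Since $\phi_{g,h}$ is a bijection $D_{g,h}\to D_{h,g}$ (Theorem~\ref{Thm:phi_iso}), both sides have at least two elements, so $|I|,|I'|\ge 2$. We set $\alpha=g|_I$, which is a bijection $I\to I'$ by Lemma~\ref{l:alpha}, and $\beta=\hat g$, which is a bijection $P\to P'$ by Lemma~\ref{l:beta}; in particular $|P|=|P'|$.

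That $\beta$ extends the partial bijection induced by $\alpha$ is immediate: for $i\in I$, $[i]_P\hat g=[ig]_{P'}=[i\alpha]_{P'}$. For the formula, take $t\in D_{g,h}$ and $x\in X$. Then $x(htg)=((xh)t)g$. Since $x h\in [x]_{P'}\hat h=[x]_{P'}\beta^{-1}$ and $P\subseteq\ker t$, the element $(xh)t$ is the unique $i\in I$ lying in $([x]_{P'}\beta^{-1})t$. Applying $g$ gives $i\alpha$.

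\emph{Injectivity of the correspondence.} If two generators yield the same tuple, they have the same domain and, by the formula, agree pointwise, so they are the same partial map. Conversely, the tuple is recovered from the map. The sets $P,I,P',I'$ are determined by the domain and image via Theorem~\ref{t:PI-sets}. To recover $\alpha$, apply the map to constant transformations: the constant map with value $i$ goes to the constant map with value $i\alpha$. To recover $\beta$, apply the map to two-valued $t$ that send one part $B$ to $j$ and all other parts to $i$, with $i\ne j\in I$; the image sends exactly $B\beta$ to $j\alpha$.

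\emph{Surjectivity.} This is the main obstacle: for every admissible tuple we must construct $g,h$ with $\phi_{g,h}$ realizing it. We define $g$ on $X$ as follows.
\begin{itemize}
\item On $I$, let $g$ agree with $\alpha$.
\item For a part $B\in P$ meeting $I$ in $i$, send all of $B$ to $i\alpha\in B\beta$.
\item For a part $B$ not meeting $I$, we have $|B|\ge 2$ and also $|B\beta|\ge 2$, since singleton parts meet $I'$ and $\beta$ matches the parts meeting $I$ with those meeting $I'$ via $\alpha$. Choose distinct $b'_1,b'_2\in B\beta$ and define $g$ on $B$ so that the functional graph of $gh$ restricted to $B$ is connected with no fixed point, in the spirit of the construction at the end of the proof of Theorem~\ref{t:PI-sets}.
\end{itemize}
Define $h$ symmetrically from $\alpha^{-1}$ and $\beta^{-1}$. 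The delicate part is arranging both $gh$ and $hg$ simultaneously so that their fixed-point sets are exactly $I$ and $I'$, and their connected components are exactly $P$ and $P'$.

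For a fixed-point-free block $B$ paired with $B'=B\beta$, pick $b_1\ne b_2$ in $B$ and $b'_1\ne b'_2$ in $B'$. Set $b_1g=b'_2$, let the rest of $B$ go to $b'_1$ under $g$, and let $b'_1h=b_1$, with the rest of $B'$ going to $b_2$ under $h$. Then $gh$ on $B$ maps $b_1\mapsto b_2$ and everything else to $b_1$, and $hg$ on $B'$ behaves symmetrically. We then verify via the first part of the proof of Theorem~\ref{t:PI-sets} that this recovers the prescribed $P,I,P',I'$. Since $g|_I=\alpha$ and $\hat g=\beta$, the formula proved above shows $\phi_{g,h}$ is the given map.

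\emph{Degenerate case.} If $|D_{g,h}|\le1$, then $|I|\le1$. If $D_{g,h}=\{c_i\}$ is a single constant map, then $\phi_{g,h}$ sends it to the constant map with value $ig$ by Lemma~\ref{l:alpha}. Any pair of points $i,i'$ is realized by choosing $P=\{X\}$ and a suitable $g,h$ built as above. The empty map arises when $I=\varnothing$, which is possible when $|X|\ne1$, for instance by taking $P=\{X\}$ with $X$ non-singleton.
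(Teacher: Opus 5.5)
Your proposal is correct. Where it overlaps with the paper, it is the same argument: $P,I,P',I'$ come from Theorem~\ref{t:PI-sets}, $\alpha=g|_I$ and $\beta=\hat g$ come from Lemmas~\ref{l:alpha} and~\ref{l:beta}, the formula follows from $\hat h=\beta^{-1}$, and $\alpha,\beta$ are recovered from constant and two-valued transformations.

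The genuine difference is your surjectivity step. The paper's proof shows only that each $\phi_{g,h}$ yields a tuple and that this tuple is determined by the map; the formula then gives that the tuple determines the map. It never checks that every admissible tuple is actually realized by some pair $g,h$.

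Your construction does this correctly. Take $g$ collapsing each block $B$ meeting $I$ in $i$ onto $i\alpha$, and $h$ collapsing each block of $P'$ meeting $I'$ in $i'$ onto $i'\alpha^{-1}$. On blocks missing $I$, use your paired choice on $B$ and $B\beta$. Then $gh$ sends each block meeting $I$ onto its point of $I$. On every other block it acts as the $2$-cycle $b_1\leftrightarrow b_2$, with all remaining points sent to $b_1$; dually for $hg$ with $b'_1\leftrightarrow b'_2$. So the fixed points of $gh$ are exactly $I$ and its components exactly $P$, the same holds for $hg$ with $I',P'$, and $g|_I=\alpha$, $\hat g=\beta$.

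This realization is exactly what the paper later relies on tacitly when it asserts that certain pairs $[\alpha,\beta]$ are generators (Theorems~\ref{t:embed}, \ref{t:infinite}, \ref{t:finite}). Your version is therefore the more complete one. The only loose end is a one-line remark: for $|X|=1$ the empty map cannot occur, because $T(X)=\{\id\}$ forces $D_{g,h}=T(X)$.
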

\begin{proof}
We first consider the case of the partial inner automorphisms $ \phi_{g,h}$ whose domain contains more than one transformation. By Theorem~\ref{t:PI-sets}, $P,I,P',I'$ exist, have the stated properties and are uniquely determined by $D_{g,h}$ and $D_{h,g}$.
Set $\alpha= g|_I$, and $\beta = \hat{g}$, where $\hat{g}$ is defined as in Lemma~\ref{l:beta}. By Lemmas~\ref{l:alpha} and \ref{l:beta}, $\alpha$ and $\beta$ are bijections, and by its definition, $\beta$ extends the partial function on $P$ induced by $\alpha$.

Let $t\in \dom\phi_{g,h}=D_{P,I}$, and $x\in X$. By Lemma~\ref{l:beta}, $\beta^{-1}=\hat{h}$. Therefore $[x]_{P'}\beta^{-1} \in P$. As $t \in D_{P,I}$, $(([x]_{P'})\beta ^{-1})t$ contains a single element $i \in I$.

We now have that $x(ht) \in  ([x]_{P'}\hat{h})t=\{i\}$, and so $x(htg)= (x(ht))g=ig=ig|_I=i \alpha$, as required.

Now for any $i\in I$, let $c_i\in D_{P,I}$ be the constant function with image $i$. It follows from the above that $c_i \phi_{g,h}= c_{i\alpha}$,
and hence $\alpha$ is uniquely determined by $\phi_{g,h}$.

Finally suppose that $\beta, \beta':P\to P'$ are two bijections, that, together with some $\phi_{g,h},\alpha, P,I,P',I'$ satisfy the conditions of the theorem. Pick two distinct elements $i,j \in I$, and for each $B \in P$, let $t_B$ be the transformation with $Bt=\{i\}$, $xt=j$ for $x \notin B$. Let $x \in B \beta$, then
$x (t_B \phi_{g,h})=i \alpha$, as $([x]_{P'}\beta^{-1}) t_B=\{i\}$. Because $\alpha$ is injective, it follows that  $([x]_{P'}\beta'^{-1}) t_B=\{i\}$. From the definition of
$t_B$ this implies  $([x]_{P'}\beta'^{-1})= ([x]_{P'}\beta^{-1})$, and so $\beta^{-1}$ and $\beta'^{-1}$ agree on $B \beta$. As $B$ was arbitrary, we get $\beta=\beta'$.

The final claim about $\phi_{g,h}$ with $|D_{g,h}|\le1$ easily follows from Theorem~\ref{t:PI-sets}.
\end{proof}

We will now turn our attention to general elements of $\Inn(T(X))$.

\begin{defi}
Let $P,P'$ be partitions of $X$, and $\gamma: P\to P'$ a bijection. If $\bar{P} = \{B_i\}$ is a partition that refines to $P$, we define
$\bar{\gamma}$ on $\bar P$ by $(\cup B_i) \bar{\gamma} = \cup ((B_i) \gamma)$.
\end{defi}

It is clear that $\bar{\gamma}$ is well-defined, and that its image is a partition that refines to $P'$.

\begin{theorem}\label{t:general}
Let $\phi \in \Inn(T(X))$. Then there exist
\begin{itemize}
\item partitions $P, P'$ of $X$;
\item $I, I' \subseteq X$;
\item bijections $\alpha: I \to I'$, $\beta: P \to P'$ satisfying $[i]_P \beta = [i \alpha]_{P'}$ for all $i \in I$
\end{itemize}
such that
\begin{itemize}
\item the domain of $\phi$ consists of all transformations $t \in T(X)$ with $\ima t \subseteq I$, $P \subseteq \ker t$;
\item the image of $\phi$ consists of all transformations $t \in T(X)$ with $\ima t \subseteq I'$, $P' \subseteq \ker t$;
\item given $t$ in the domain of $\phi$, and $x \in X$, we have
$(x)(t\phi)= i \alpha$, where $i\in I$ is the unique element in $(([x]_{P'})\beta ^{-1})t$.
\end{itemize}
Moreover, if $\phi_1,\phi_2 \in \Inn(T(X))$ have corresponding parameters
\[
(P_1,I_1, P'_1, I_1', \alpha_1, \beta_1)\mbox{ and }(P_2, I_2, P'_2, I_2', \alpha_2, \beta_2)
\]
then $\phi_1\phi_2$ corresponds to
\[
((P'_1 \vee P_2) \bar \beta_1^{-1}, (I'_1 \cap I_2) \alpha_1^{-1}, (P'_1 \vee P_2) \bar \beta_2, (I'_1 \cap I_2) \alpha_2, \alpha_1\alpha_2, \bar \beta_1\bar \beta_2)\,,
\]
where $\alpha_1\alpha_2$ refers to the partial composition $\alpha_1|_{(I'_1 \cap I_2) \alpha_1^{-1}}\alpha_2$.
\end{theorem}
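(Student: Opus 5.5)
The proof goes by induction on the length of a word in the generators of $\Inn(T(X))$. Every element of $\Inn(T(X))$ is a finite product of maps $\phi_{g,h}$ and their inverses, together with the identity (which is $\phi_{1,1}$). Since $\phi_{g,h}^{-1}=\phi_{h,g}$ by Theorem~\ref{Thm:phi_iso}, it suffices to show two things. First, every generator $\phi_{g,h}$ admits parameters $(P,I,P',I',\alpha,\beta)$ as in the statement. Second, the class of partial maps admitting such parameters is closed under composition, with the composite parameters given by the displayed formula. The closure claim is exactly the ``moreover'' part of the statement, so the whole theorem reduces to the base case and the composition formula.

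\emph{Base case.} If $|D_{g,h}|\ge 2$, Theorem~\ref{t:generators} supplies the parameters directly. Its requirement that $\beta$ extend the partial bijection induced by $\alpha$ is precisely $[i]_P\beta=[i\alpha]_{P'}$.

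If $D_{g,h}=\{c_i\}$ is a single constant map sent to $c_j$, take $P=P'=\{X\}$, $I=\{i\}$, $I'=\{j\}$, $\alpha\colon i\mapsto j$, and let $\beta$ be the unique bijection. The formula in the statement then returns $c_j$, as required.

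If $D_{g,h}=\varnothing$, take $I=I'=\varnothing$, $P=P'=\{X\}$, and let $\beta$ be the unique bijection. Then the domain consists of the $t$ with $\ima t\subseteq\varnothing$, which is empty as long as $X\neq\varnothing$. This is the degenerate case in which the parametrization is deliberately non-unique, since the statement only asserts existence.

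\emph{Composition.} Let $\phi_1,\phi_2$ have the given parameters. The domain of $\phi_1\phi_2$ consists of those $t\in D_{P_1,I_1}$ with $t\phi_1\in D_{P_2,I_2}$. Write $s=t\phi_1$. By the action formula, $\ima s=(\ima t)\alpha_1$, and $\ker s$ is the image of $\ker t$ under $\bar\beta_1$. The latter makes sense because $\ker t$ is coarser than $P_1$, and $x\,(t\phi_1)$ depends only on the block $[x]_{P'_1}\beta_1^{-1}$ and the value of $t$ on it.

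So $s\in D_{P_2,I_2}$ holds if and only if two conditions hold:
\begin{itemize}
\item[(a)] $\ima t\subseteq (I'_1\cap I_2)\alpha_1^{-1}$;
\item[(b)] $P_2\subseteq \ker s$. Since $\ker s$ already contains $P'_1$, this is equivalent to $P'_1\vee P_2\subseteq \ker s$, and hence to $(P'_1\vee P_2)\bar\beta_1^{-1}\subseteq\ker t$.
\end{itemize}
This gives the domain parameters. Symmetrically, the image of $\phi_1\phi_2$ is identified by applying $\alpha_2$ and $\bar\beta_2$ to the intermediate image set $I'_1\cap I_2$ and the intermediate partition $P'_1\vee P_2$. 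The action formula for the composite follows by substituting the action formula for $\phi_1$ into the one for $\phi_2$, block by block.

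Finally, the compatibility condition $[i]\bar\beta_1\bar\beta_2=[i\alpha_1\alpha_2]$ follows from the corresponding conditions for $\beta_1$ and $\beta_2$. These pass to the coarsenings $\bar\beta_1,\bar\beta_2$ because $\bar\gamma$ is defined by taking unions of blocks.

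\emph{Main obstacle.} The delicate step is (b): showing that $\ker(t\phi_1)$ is exactly the $\bar\beta_1$-image of $\ker t$, and that $\bar\beta_1^{-1}$ carries $P'_1\vee P_2$ to a genuine partition coarser than $P_1$. I would prove this by fixing $x,y$ and observing that $x(t\phi_1)=y(t\phi_1)$ holds if and only if $t$ takes the same value on the blocks $[x]_{P'_1}\beta_1^{-1}$ and $[y]_{P'_1}\beta_1^{-1}$. This uses injectivity of $\alpha_1$ together with the fact that $t$ is constant on $P_1$-blocks.

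A second point to address is the case where $I'_1\cap I_2$ is empty or a singleton. There the resulting parameters need not be partial sections, so the domain degenerates. The stated formula still describes the composite, because the statement allows arbitrary $I\subseteq X$ rather than partial sections.
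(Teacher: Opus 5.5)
Your proposal is correct and follows the paper's proof essentially step for step. Both use structural induction over products of generators, with the base case taken from Theorem~\ref{t:generators} and the degenerate cases handled by $P=P'=\{X\}$; both then identify the domain and image of $\phi_1\phi_2$ through $I_1'\cap I_2$ and $P_1'\vee P_2$, transported by $\alpha_1^{-1},\bar\beta_1^{-1}$ and $\alpha_2,\bar\beta_2$, and check compatibility and the action formula block by block. Your explicit description of $\ker(t\phi_1)$ as $(\ker t)\bar\beta_1$ fills in what the paper calls ``straightforward to check''.

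One small correction to your closing aside: empty sets and singletons are always partial sections. Where the partial-section property can genuinely fail is when $P_1'\vee P_2$ merges blocks containing distinct points of $I_1'\cap I_2$. This is harmless, as you note, since the statement allows arbitrary $I$.
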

\begin{proof}
We will show the assertions by structural induction over the involved elements $\phi, \phi_1, \phi_2$. The beginning of the induction corresponds to those $\phi$ of the form $\phi_{g,h}$, and follows from Theorem \ref{t:generators} (in the cases with $|D_{g,h}|\le 1$, we can chose $P=P'=\{X\}, \beta=\id_{\{\{X\}\}}$).

Suppose the theorem holds for $\phi_1, \phi_2\in \Inn(T(X))$. Then $L:=\ima \phi_1\cap \dom \phi_2$ consists of all transformations $t$ with $\ima t\subseteq I'_1\cap I_2$ and $P_1'\vee P_2 \subseteq \ker t$. It is now straightforward to check that
\[
L \phi_1^{-1}=D_{(P'_1 \vee P_2) \bar \beta_1^{-1}, (I'_1 \cap I_2) \alpha_1^{-1}} \mbox{  and }L\phi_2=D_{ (P'_1 \vee P_2) \bar \beta_2, (I'_1 \cap I_2) \alpha_2}
\]
and hence these parameters define the domain and image of $\phi_1\phi_2$.

Let $i \in(I'_1 \cap I_2) \alpha_1^{-1}\subseteq I$, then
\[
[i]_{ (P'_1 \vee P_2) \bar \beta_1^{-1}}\bar \beta_1 \supseteq [i]_{P_1} \beta_1=[i \alpha_1]_{P'}\,,
\]
 and so
\[
[i]_{ (P'_1 \vee P_2) \bar \beta_1^{-1}}\bar \beta_1=[i \alpha_1]_{P'_1 \vee P_2} \supset [i \alpha_1]_{P_2}\,.
\]
Because $i \alpha_1 \in I'_1 \cap I_2 \subseteq I_2$, we get that
\[
[i]_{ (P_1' \vee P_2) \bar \beta_1^{-1}}\bar \beta_1\bar \beta_2 \supset [i \alpha_1]_{P_2}\beta_2 =[i \alpha_1\alpha_2]_{P_2'}\,.
\]
 Hence we get
\[
[i]_{ (P_1' \vee P_2) \bar \beta_1^{-1}}\bar \beta_1\bar \beta_2 =[i \alpha_1\alpha_2]_{(P'_1 \vee P_2) \bar \beta_2}\,,
\]
as required.

Let $t \in L \phi_1^{-1}$, and $x \in X$. Pick an element $y\in [x]_{(P'_1 \vee P_2)\bar \beta_2}\bar \beta_2^{-1}$. Because $\bar \beta_2^{-1}$ is injective, we have $[x]_{(P'_1 \vee P_2)\bar \beta_2}\bar \beta_2^{-1}=[y]_{P_1' \vee P2}.$
It follows that
\[
([x]_{(P'_1 \vee P_2)\bar \beta_2}(\bar \beta_1 \bar \beta_2)^{-1})t=([x]_{(P'_1 \vee P_2)\bar \beta_2}\bar \beta_2^{-1} \bar \beta_1^{-1})t=([y]_{P'_1 \vee P_2} \bar \beta_1^{-1})t=([y]_{P'_1} \beta_1^{-1})t\,,
\]
where the last equality holds  because the kernel of $t$ contains $(P'_1 \vee P_2) \bar \beta_1^{-1}$. By induction, this set contains a unique element $i$ such that $y (t \phi_1)=i \alpha_1$.

Also by induction,  $x ((t\phi_1) \phi_2) = j\alpha_2$, where $j$ is the unique element in
\[
([x]_{(P'_1 \vee P_2)\bar \beta_2}\bar \beta_2^{-1}) (t \phi_1)=([y]_{P_1' \vee P2})(t \phi_1)=\{y (t \phi_1)\}=\{i \alpha_1\}\,.
\]
Hence $x ((t \phi_1)\phi_2)= (i \alpha_1) \alpha_2$. Because $i \in ([x]_{(P'_1 \vee P_2)\bar \beta_2}(\bar \beta_1 \bar \beta_2)^{-1})$, the result follows.
\end{proof}

We can now obtain results about the structure of $\Inn(T(X))$. For a set $X$, let $A(X)$, $B(X)$ be the set of all bijections between subsets of $X$, and bijections on partitions of $X$, respectively. We say that
$\alpha\in A(X), \alpha:I\to I'$ and $\beta\in B(X), \beta: P \to P'$ are compatible, written $\alpha\approx \beta$, if $[i]_P \beta = [i \alpha]_{P'}$ for all $i \in I$.

Let $V(X) = \{(\alpha, \beta):\alpha\in A(X), \beta\in B(X), \alpha \approx \beta\}$. On $V(X)$ we define a binary operation
\[
(\alpha_1, \beta_1)(\alpha_2, \beta _2)=(\alpha_1 \alpha_2, \bar \beta_1 \bar \beta_2)\,,
\]
where $\bar \beta_i$ is as in Theorem \ref{t:general}, and  where we fix the domain of  $\alpha_1 \alpha_2$ [of $ \bar \beta_1 \bar \beta_2$] as the largest subset of $X$ [finest partition on $X$] for which these expressions are well-defined. It is easy to check that domains and images of  $\alpha_1 \alpha_2$ and $ \bar \beta_1 \bar \beta_2$ are given by the expressions from  Theorem \ref{t:general}.

It will follow from our results below that $V(X)$ with this operation is an inverse monoid. Because for every partial bijection $\alpha$ on $X$, there is a compatible $\beta$, the projection of $V(X)$ to its the first component is essentially the symmetric inverse monoid on $X$.

On $V(X)$, define a binary relation
\[
\theta = \Delta_{V(X)} \cup \{ ((\alpha, \beta_1),(\alpha, \beta _2)):
\alpha \in A(X),  |\dom \alpha| \le 1,\beta_1,\beta_2 \in B(X) \}\,.
\]
Clearly, $\theta$ is an equivalence relation, and because $\{(\alpha, \beta): |\dom \alpha|\le 1\}$ is an ideal of $V(X)$, $\theta$ is compatible with the operation on $V(X)$.
We set $W(X)= V(X)/ \theta.$ For $[(\alpha, \beta)]_\theta \in W(X)$ we will also use the short  notation $[\alpha, \beta]$.

\begin{theorem}\label{t:embed}
 Let $X$ be any set. For $\phi \in \Inn(T(X))$, let $\alpha_\phi, \beta_\phi$ be the  bijections associated with $\phi$ by \textup{Theorem \ref{t:general}}. Then $\varphi: \Inn(T(X))\to W(X)$, given by $\varphi(\phi)=[(\alpha_\phi, \beta_\phi)]_\theta$ is an embedding.

In particular, $\Inn(T(X))$ is isomorphic to the substructure of $W(X)$ generated by all elements of $W(X)$ that can be represented as $[(\alpha, \beta)]_{\theta}$ such that $\dom \alpha$ is a partial section of $\dom \beta$, and all singleton parts of $\dom \beta$ intersect $\dom \alpha$.
\end{theorem}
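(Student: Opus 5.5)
We must show that $\varphi$ is well defined, a homomorphism, and injective; the final sentence will then follow from the description of the generators.

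\emph{Well-definedness.} For $\phi$ with $|\dom\phi|\ge 2$, the domain $D_{P,I}$ determines $I$. Because $|I|\ge 2$, it also determines $P$: the argument of Theorem~\ref{t:PI-sets} separating two partitions by a $2$-partition works for any $D_{P,I}$ with $|I|\ge2$, even when $I$ is not a partial section of $P$. Likewise $I',P'$ are determined by $\ima\phi$. The map $\alpha$ is recovered from the constant maps, via $c_i\phi=c_{i\alpha}$. The map $\beta$ is recovered exactly as in the last step of the proof of Theorem~\ref{t:generators}, using the transformations $t_B$, which only needs two distinct points of $I$. So for $|\dom\alpha|\ge2$ the pair $(\alpha_\phi,\beta_\phi)$ is uniquely determined by $\phi$.

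When $|\dom\phi|\le 1$, we have $|I|\le1$. The set $I$ is still determined, as the union of the images of the domain elements, and so $\alpha$ is determined, while $\beta$ may vary. This is precisely the ambiguity collapsed by $\theta$. Hence $\varphi(\phi)=[\alpha_\phi,\beta_\phi]$ is well defined.

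\emph{Homomorphism.} The composition formula of Theorem~\ref{t:general} shows that $\phi_1\phi_2$ admits the parameters $(\alpha_1\alpha_2,\bar\beta_1\bar\beta_2)$. These are exactly the product $(\alpha_1,\beta_1)(\alpha_2,\beta_2)$ in $V(X)$, since the domains and images coincide with those in Theorem~\ref{t:general}. Combined with well-definedness, this gives $\varphi(\phi_1\phi_2)=\varphi(\phi_1)\varphi(\phi_2)$. We should also check that the identity $\phi_{1,1}$ goes to the identity of $W(X)$. Its parameters are $I=X$, $P$ the discrete partition, and $\alpha,\beta$ identities, and this is indeed the identity of $V(X)$.

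\emph{Injectivity.} Suppose $\varphi(\phi_1)=\varphi(\phi_2)$. If $|\dom\alpha|\ge2$, the $\theta$-class is a singleton, so the pairs agree. By Theorem~\ref{t:general}, $\phi$ is completely determined by $(P,I,P',I',\alpha,\beta)$, and $P,I,P',I'$ are read off as the domain and image of $\alpha$ and $\beta$. Hence $\phi_1=\phi_2$.

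If $|\dom\alpha|\le1$, the domain of $\phi$ consists of the maps with image in $I$ whose kernel contains $P$. Since $|I|\le1$, this is either empty or the single constant map $c_i$, regardless of $P$. The map $\phi$ then sends $c_i$ to $c_{i\alpha}$ by the formula in Theorem~\ref{t:general}. So $\phi$ depends only on $\alpha$, and $\phi_1=\phi_2$.

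This degenerate case, together with the identification of $P$ from $D_{P,I}$ when $I$ is not a partial section, is where I expect the main care to be needed.

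\emph{Final claim.} $\Inn(T(X))$ is generated by the $\phi_{g,h}$. By Theorem~\ref{t:generators}, together with the constant and empty cases (which fall into the degenerate classes), their images under $\varphi$ are exactly the classes $[\alpha,\beta]$ in which $\dom\alpha$ is a partial section of $\dom\beta$ meeting all singleton parts. Here all such tuples arise by the converse part of Theorem~\ref{t:PI-sets} together with Theorem~\ref{t:generators}. Since $\varphi$ is an injective homomorphism, $\Inn(T(X))$ is isomorphic to the substructure of $W(X)$ generated by these classes.
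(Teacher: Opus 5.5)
Your proof of the embedding part follows the paper's argument. You get $\dom\alpha$ and $\ima\alpha$ as unions of images and $\alpha$ from $c_i\phi=c_{i\alpha}$. You get $\dom\beta$ from kernels of two-valued maps (your $2$-partition separation replaces the paper's minimal kernel classes of the $t_B$) and $\beta$ from $t_B\phi$. The homomorphism property comes from the composition formula of Theorem~\ref{t:general}, and injectivity from the action formula. You are, if anything, more careful than the paper about the case $|\dom\alpha|\le 1$ and about the identity.

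\textbf{The final step does not hold as you state it.} The generating set in the statement imposes the partial-section and singleton conditions only on $(\dom\alpha,\dom\beta)$. A generator $\varphi(\phi_{g,h})$ with $|D_{g,h}|\ge 2$ also satisfies them on $(\ima\alpha,\ima\beta)$, because $\ima\beta$ is the partition into connected components of the graph of $hg$, and a singleton component is a fixed point.

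The two sets really differ. Take $X=\{1,2,3,4\}$, $\alpha\colon 1\mapsto 1,\ 2\mapsto 3$, and $\beta\colon \{1\}\mapsto\{1,2\},\ \{2\}\mapsto\{3\},\ \{3,4\}\mapsto\{4\}$. Then $\alpha\approx\beta$ and the domain-side conditions hold. But $\{4\}$ is a singleton part of $\ima\beta$ that misses $\ima\alpha=\{1,3\}$. Since $|\dom\alpha|\ge 2$, its $\theta$-class is a singleton, so it is not the image of any $\phi_{g,h}$.

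Nor can the converse part of Theorem~\ref{t:PI-sets} give the surjectivity you claim. It only realizes a prescribed domain, with $g=\id$, and gives no control over $\alpha$, $\beta$ or the image.

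There are two ways to close this:
\begin{itemize}
\item If the condition is read symmetrically, on both $(\dom\alpha,\dom\beta)$ and $(\ima\alpha,\ima\beta)$, then your ``exactly'' is correct by Theorem~\ref{t:generators}.
\item Under the literal one-sided reading, you still have to show that the extra classes are products of generators. In the paper this only follows from Theorems~\ref{t:infinite} and~\ref{t:finite}. For infinite $X$, $\varphi$ is onto. For finite $X$ with $|\dom\alpha|\ge 2$, the one-sided conditions force condition (1) or (2) of Theorem~\ref{t:finite}: if $\dom\alpha=X$ then $\dom\beta$ is discrete, and otherwise any $x\notin\dom\alpha$ lies in a non-singleton part not contained in $\dom\alpha$.
\end{itemize}
The paper's one-line justification of this final assertion passes over the same point.
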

\begin{proof}
Our construction guarantees that $\varphi$ is a homomorphism, provided it is well defined.

Hence let $\phi \in \Inn(T(X))$, and $\alpha, \beta$ be the bijections associated with $\phi$.
Because $\dom \alpha$ and $\ima \alpha$ are the maximal images of all transformations in $\dom \phi$ and $\ima \phi$, respectively, they are uniquely determined by $\phi$.

For each $i\in \dom \alpha$, let $c_i$ be the constant function with image $i$. Then $c_i \in \dom \phi$, and $c_i \phi= c_{i \alpha}$. It follows that $\alpha$ is uniquely determined by $\phi$.

If $|\dom \alpha| \le 1$, then one $\theta$-class contains $(\alpha,\beta)$ for all choices of $\beta$. So assume otherwise, say $i,j \in \dom \alpha$.

Let $B\in \dom \beta$.
Because $\dom \phi$ contains the transformation $t_B$ that maps $B$ to $i$ and $X\setminus B$ to $j$, it follows that the parts of $\dom \beta$ are determined by all minimal kernel classes of transformations in $\dom \phi$. Hence $\dom \beta$ is unique, and similarly, we see that $\ima \beta$ is unique.

Finally, because $t_B \phi$ maps exactly $B \beta$ to $i \alpha$, we see that $\beta $ itself is uniquely determined. It follows that $\varphi$ is well-defined, and hence a homomorphism.

Moreover, for every $t \in \dom \phi$, and $x \in X$, we have $(x)(t\phi)= i \alpha$, where $i\in I$ is the unique element in $(([x]_{P'})\beta ^{-1})t$. Therefore $t\phi$ is uniquely determined by $\alpha, \beta$, and hence $\varphi$ is injective.

The final assertion follows from the description of the generators $\phi_{g,h}$ of $\Inn(T(X))$ in Theorem \ref{t:generators}, noting that in the case of $|\dom \alpha|\le 1$, we may always choose $\beta=\id_{\{\{X\}\}}$, in which case the representation $[\alpha,\beta]$ is as claimed.
\end{proof}

For a complete classification, it remains to determine the image of the embedding $\varphi$. We will have to distinguish between finite and infinite $X$. In the following, by the term ``generator'', we will mean an element of the form $\phi_{g,h} \varphi$.

\begin{theorem}\label{t:infinite}
Let $X$ be infinite. Then, $\Inn(T(X))$ is isomorphic to $W(X)$, and the embedding $\varphi$ from \textup{Theorem \ref{t:embed}} is an isomorphism.
\end{theorem}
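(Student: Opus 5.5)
By Theorem~\ref{t:embed}, $\varphi$ is an injective homomorphism. So it suffices to show surjectivity: every $[\alpha,\beta]\in W(X)$ is a product of generators, i.e.\ of classes $[\alpha_0,\beta_0]$ with $\dom\alpha_0$ a partial section of $\dom\beta_0$ meeting every singleton part. Inverses of generators are again generators, by the symmetric description in Theorem~\ref{t:generators}.

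If $|\dom\alpha|\le 1$, the class $[\alpha,\beta]$ does not depend on $\beta$. It is then the image of a map between constant transformations or of the empty map, and these are generators by the last clause of Theorem~\ref{t:generators}. So assume $\alpha:I\to I'$ with $|I|\ge 2$ and $\beta:P\to P'$ compatible. In general $I$ fails to be a partial section of $P$: a block may contain several points of $I$, or a singleton block may miss $I$. The plan is to use the two mechanisms in the product formula of Theorem~\ref{t:general}. Domains of $\alpha$ shrink by intersection $I'_1\cap I_2$, and partitions coarsen by the join $P'_1\vee P_2$. Both give room to repair these defects.

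\emph{Step 1 (shift into a proper subset).} Since $X$ is infinite, choose an injection $\sigma:X\to X$ whose image $Y$ has complement $Z=X\setminus Y$ with $|Z|=|X|$. Build a generator $\psi_1=[\alpha_1,\beta_1]$ with the following data:
\begin{itemize}
\item $P_1$ is the discrete partition and $I_1=X$;
\item $\alpha_1=\sigma$;
\item $P'_1$ consists of the singletons $\{y\}$, $y\in Y$, together with a partition of $Z$ into two-element blocks;
\item $I'_1=Y$.
\end{itemize}
Then $I'_1$ is a partial section of $P'_1$ meeting all singletons, and $|P_1|=|P'_1|=|X|$. Let $\beta_1$ extend the bijection induced by $\alpha_1$ by mapping the remaining singletons of $P_1$ onto the $Z$-blocks; no singletons are left over, since $\sigma$ is onto $Y$ and the $Z$-blocks biject with no singletons of $P_1$ by construction. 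One must choose these cardinalities correctly; this is where infiniteness is used.

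\emph{Step 2 (bridging).} Next build a generator $\psi_2$ with $I_2=I\sigma\subseteq Y$ and a partition $P_2$ such that, restricted to $Y$, $P_2\vee P'_1$ equals $P\sigma$ (transported). Points of $Y\setminus I_2$ are free to lie in blocks together with one point of $I_2$. Two points $i\sigma,j\sigma$ of $I_2$ lying in the same $P$-block are joined by a bridge $i\sigma \mathrel{P_2} z \mathrel{P'_1} z' \mathrel{P_2} j\sigma$, using a fresh two-element block $\{z,z'\}$ of $P'_1$. Each $P_2$-block then still contains at most one point of $I_2$. Since $|Z|=|X|$, there are enough bridges, and the unused $Z$-blocks can be absorbed into existing blocks. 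Singleton blocks of $P$ missing $I$ are avoided in $P_2$ by attaching an unused point of $Z$ to them, which does not alter the join on $Y$.

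\emph{Step 3 (target).} Choose the target data $P'_2,I'_2,\alpha_2,\beta_2$ so that the composite data $(I'_1\cap I_2)\alpha_1^{-1}=I$, $(P'_1\vee P_2)\bar\beta_1^{-1}=P$, $\alpha_1\alpha_2=\alpha$ and $\bar\beta_1\bar\beta_2=\beta$ hold. If $P'$ and $I'$ are themselves bad, apply the mirror image of Steps~1--2 on the image side and compose: write $[\alpha,\beta]=\psi_1\psi_2\psi_3^{-1}$. Here $\psi_1\psi_2$ realises $(P,I)$ and $\psi_3$ realises $(P',I')$, both with a common nice intermediate pair (a partial section with matching bijections). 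The middle connecting map is then a single generator.

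I expect the main obstacle to be the cardinality bookkeeping. Each generator must satisfy $|P|=|P'|$, all singleton parts of both partitions must meet the sections, and $\beta_i$ must extend the bijection induced by $\alpha_i$. Checking via Theorem~\ref{t:general} that the composed $\beta$ is exactly the prescribed one, rather than merely compatible, also requires care. Infiniteness of $X$ is what makes the auxiliary set $Z$ available and absorbable; the later finite case presumably fails precisely here.
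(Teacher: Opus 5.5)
Your overall strategy (push $X$ injectively into a copy of itself with room to spare, create the merges of $P$ by a join with bridge blocks, then pull back) is the paper's. However, the generator $\psi_1$ of Step~1 does not exist, and Step~2 is built on it. Since $P_1$ is discrete and $I_1=X$, every block of $P_1$ meets $I_1$. So the bijection induced by $\alpha_1=\sigma$ is already defined on all of $P_1$, namely $\{x\}\mapsto\{x\sigma\}$, and it is onto the $Y$-singletons. Because $\beta_1$ must extend it, there are no ``remaining singletons of $P_1$'' to send to the two-element $Z$-blocks, and $\beta_1$ cannot be onto $P'_1$. The equality $|P_1|=|P'_1|$ is necessary but not sufficient: by the last clause of Lemma~\ref{l:beta}, blocks missing $I$ must correspond to blocks missing $I'$, and here there are none on one side and $|X|$ many on the other. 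Directly: a generator whose domain is all of $T(X)$ is a $\phi_{g,h}$ with $gh=1$, and its image is $D_{\ker h,\,\ima g}$, where every block contains exactly one point of $\ima g$. Hence the bridge blocks $\{z,z'\}$ of $P'_1$ that Step~2 relies on are unavailable.

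The repair is to place the auxiliary points inside the $P'_1$-blocks of the points $x\sigma$. Take $P'_1=\{\{x\sigma\}\cup Z_x : x\in X\}$ with pairwise disjoint $Z_x$ of size $|X|$ (possible because $|X\times X|=|X|$). Then bridge $x,y$ in the same $P$-block by a two-element block joining a point of $Z_x$ to a point of $Z_y$. This is exactly the paper's construction: a bijection $\sigma:X\to X^2$, with the rows $(\{x\}\times X)\sigma\inv$ as $P'_1$ and the diagonal as $I'_1$. Next comes the identity generator on $\{(x,x)\sigma\inv : x\in I\}$ and $P'_1$. Then comes the bridge generator on the blocks $\{(x,y),(y,x)\}\sigma\inv$, with $I_3$ the union of all singleton blocks; this choice disposes of the singleton bookkeeping you worried about. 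Finally comes $[\alpha_1,\beta_1]\inv$, and the product is $[\id_I,\id_P]$.

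Step~3 also needs replacing. No element of $\Inn(T(X))$ with domain $D_{P,I}$ has a ``nice'' image. By Theorem~\ref{t:general}, $[i]_P\beta=[i\alpha]_{P'}$, so two points of $I$ in one $P$-block go to two points of the image set in one block. Thus the ``common nice intermediate pair'' for $\psi_1\psi_2\psi_3\inv$ cannot exist. It is also unnecessary. The image partition of a product is the coarsening $(P'_1\vee P_2)\bar\beta_2$, not $\ima\beta_2$, so bad image data arise automatically.

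The paper finishes with $[\alpha,\beta]=[\id_{\dom\alpha},\id_{\dom\beta}]\,[\alpha',\beta']$, where $[\alpha',\beta']$ is a single generator built as follows:
\begin{itemize}
\item refine each block $B$ of $\dom\beta$ meeting $\dom\alpha$ into parts containing exactly one point of $\dom\alpha$;
\item refine $B\beta$ correspondingly via $\alpha$;
\item add to $\dom\alpha'$ (resp.\ $\ima\alpha'$) one point from each block missing $\dom\alpha$ (resp.\ $\ima\alpha$).
\end{itemize}
The join with $\dom\beta$ then coarsens both sides back to $\dom\beta$ and $\ima\beta$, so the image side needs no separate treatment.
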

\begin{proof}
By Theorem \ref{t:embed}, it suffices to show that $W(X)$ is indeed generated by all generators.
Let $I \subseteq X$, and $P$ be a partition $X$. Clearly, $\id_I \approx \id_P$. We first show that $[(\id_I, \id_P)]_\theta$ is in the image of $\varphi$.

Choose a bijection $\sigma: X \to X^2$.  Let $P_1$ be the singleton partition on $X$, $P_1'= \{(\{x\} \times X)\sigma^{-1}: x \in X\}$, and define
$\alpha_1:X \to (\Delta_X) \sigma^{-1} $,
 $\beta_1: P_1 \to P_1'$ by
$x \alpha_1= (x,x) \sigma^{-1}$, $ \{x\} \beta_1= (\{x\}\times X) \sigma^{-1}$. It is straightforward to check
 that $[\alpha_1, \beta_1]$ is a generator.

Next let $\alpha_2$ and $\beta_2$ be the identities on $\{(x,x) \sigma^{-1}:x \in I\}$ and $P_1'$, respectively. Because $P_1'$ does not contain any singleton blocks, $[\alpha_2, \beta _2]$ is once again a generator.

Let $\beta_3$ be the identity on the partition $P_3$ consisting of all sets of the form $\{(x,y), (y,x)\}\sigma^{-1}$ for $x,y \in X$ with $[x]_P=[y]_P$, and singletons otherwise. Moreover, let $I_3$ be the union of all singleton sets in $P_3$ and $\alpha_3=\id_{I_3}$. Once again, $(\alpha_3, \beta_3)$ is a generator.

Finally, let $\alpha_4= \alpha_1^{-1}$, $\beta_4=\beta_1^{-1}$. We claim  $[(\id_I, \id_P)]_\theta =\Pi_{i=1}^4[(\alpha_i, \beta_i)]_\theta$.
Let $x \in I$, then
\[
x \alpha_1 \alpha_2 \alpha_3 \alpha_4= ((x,x) \sigma^{-1}) \alpha_2 \alpha_3 \alpha_4
=((x,x) \sigma^{-1}) \alpha_3 \alpha_4= ((x,x) \sigma^{-1})\alpha_4= x\,.
\]

If $x \notin I$, then $\alpha_2$ is undefined at $x \alpha_1=((x,x) \sigma^{-1})$. Hence $\alpha_1 \alpha_2 \alpha_3 \alpha_4= \id_I$.
Let $B \in P$, and $C \subseteq B$. Then,
\[
C \bar \beta_1\bar \beta_2 \bar \beta_3\bar \beta_4= ((C\times X) \sigma^{-1})\bar \beta_2 \bar \beta_3\bar \beta_4= ((C\times X) \sigma^{-1}) \bar \beta_3\bar \beta_4= ((B\times X) \sigma^{-1}) \bar \beta_4=B\,.
\]
From this it follows that the domain of $\bar \beta_1\bar \beta_2 \bar \beta_3\bar \beta_4$ is indeed $P$ (as opposed to a refinement), and that $\bar \beta_1\bar \beta_2 \bar \beta_3\bar \beta_4$ acts as the identity. Hence $[(\id_I, \id_P)]_\theta$ is in the image of $\varphi$, as claimed.

For the general case, let  $[\alpha, \beta]_\theta \in W(X)$ be arbitrary. Construct $[\alpha', \beta']$ as follows:
If $B_i \in \dom \beta$ intersects $\dom \alpha$, choose a partition $P_{B_i}$ of $B_i$ that contains exactly one element of $\dom \alpha$ in each part, and let $\dom \beta'$ be the union of the $P_{B_i}$, together with all $B \in \dom \beta$ not intersecting $\dom \alpha$. Note that $\dom \beta'$ is a refinement of $\dom \beta$.
Let $\ima \beta'$ be the  refinement obtained from $\ima \beta$ in the same way. If $B'_i\in \dom \beta'$ contains a (unique) element $i \in \dom \alpha$, then let $B'_i \beta'= [i \alpha]_{\ima \beta'}$, otherwise, set $B'_i \beta'= B_i' \beta$.
If $B_i \in \dom \beta $ does not intersect $\dom \alpha$, choose an element $b_i \in B_i$. Let $\dom \alpha'$ be obtained from $\dom \alpha$ by adjoining all the elements $b_i$. Similarly enlarge $\ima \alpha$ to $\ima \alpha'$ by choosing one element from each $B_i \in \ima \beta$ that does not intersect $\ima \alpha$. Now let $x \alpha'$ be the unique element in $\ima \alpha' \cap [x]_{\dom \beta'}\beta'$.

Then $[\alpha', \beta'] $ is a generator. Since $[\id_{\dom \alpha}, \id_{\dom \beta}]\in \ima \varphi$, this also holds for
$[\id_{\dom \alpha}, \id_{\dom \beta}] [\alpha', \beta']$. A straightforward check shows that this product is $[\alpha,\beta]$, and the result follows.
\end{proof}

\begin{theorem}\label{t:finite}
Let $X$ be finite, and $[\alpha,\beta]_{\theta} \in W(X)$. If $|\dom \alpha| \ge 2$, then
$[\alpha,\beta]_{\theta} \in \ima \varphi$ if and only if one of the following holds:
\begin{itemize}
\item[\textup{(1)}] $\dom \alpha=X$ and $\dom \beta$ is the partition of $X$ into singletons;
\item[\textup{(2)}] there exists $B \in \dom \beta$ with $|B| \ge 2$, $B \not\subseteq \dom \alpha$.
\end{itemize}
If $|\dom \alpha| \le 1$, then $[\alpha,\beta]_{\theta} \in \ima \varphi$, unless $|X|=1$ and $\dom \alpha= \varnothing$.
\end{theorem}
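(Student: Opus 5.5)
The proof splits into three parts: the degenerate case $|\dom\alpha|\le 1$, necessity of (1) or (2), and sufficiency of (1) or (2).

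\emph{The case $|\dom \alpha|\le 1$.} Here the $\theta$-class does not depend on $\beta$, so only $\alpha$ matters. By Theorem~\ref{t:generators}, every map sending one constant transformation to another is some $\phi_{g,h}$, which handles $|\dom\alpha|=1$. For $|X|\neq 1$ the empty map is a generator, which handles $\dom\alpha=\varnothing$. When $|X|=1$, $T(X)$ is trivial, so $\Inn(T(X))$ consists of the identity alone and the empty map is not reached. This is exactly the stated exception.

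\emph{Necessity.} I would first classify the generators $[\alpha,\beta]$ with $|\dom\alpha|\ge2$ using Theorem~\ref{t:generators}. There $I=\dom\alpha$ is a partial section of $P=\dom\beta$ that meets every singleton block. If $P$ is the partition into singletons, then $I=X$, which is case (1). Otherwise some block $B$ has $|B|\ge 2$, and since $|B\cap I|\le 1$ we get $B\not\subseteq I$, which is case (2). So it suffices to show that the set
\[
\mathcal{G}=\{[\alpha,\beta] : |\dom\alpha|\le 1\ \text{or (1) or (2) holds}\}
\]
is closed under the product formula of Theorem~\ref{t:general}.

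Factors of type (1) are units: they are permutations paired with the induced map on singletons. Multiplying by a unit only relabels via $\alpha_1$ and $\beta_1$ and visibly preserves (2). So the real work is a product of two type-(2) factors whose result has $|\dom\alpha|\ge 2$. If the witness block $B$ lies in the first factor, the block of $(P_1'\vee P_2)\bar\beta_1^{-1}$ containing $B$ still has size at least $2$. It is also not contained in $(I_1'\cap I_2)\alpha_1^{-1}\subseteq I_1$, since $B\not\subseteq I_1$. So (2) persists.

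The hard case is when the witness $B$ lies only in $P_2$. Let $C$ be the block of $P_1'\vee P_2$ containing $B$; then $C\not\subseteq I_1'\cap I_2$. The pull-back $C\bar\beta_1^{-1}$ is a union of $P_1$-blocks, and compatibility makes $\alpha_1^{-1}$ carry $C\cap I_1'\cap I_2$ into it. I would argue by contradiction using finiteness. Suppose every block of size at least $2$ in the product's domain partition lies inside its domain set. Count $|X|$ as the sum of block sizes and compare it with the counts for $P_1$, $P_1'$, $P_2$, using that $\beta_1$ is a bijection of blocks and that $|I_1|=|I_1'|$. The aim is to force $C\bar\beta_1^{-1}$ to be a single block of size $1$ while $C$ has size at least $2$, and then push that mismatch back through all the other blocks to get a contradiction with $|X|<\infty$.

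This counting step is the one I expect to be the main obstacle, and I would first test it on $|X|=3$ to find the right invariant. The natural candidate is a quantity such as
\[
|X|-|\dom\beta|+\bigl|\{B\in\dom\beta : B\cap\dom\alpha=\varnothing\}\bigr|,
\]
which I would try to show is zero exactly for units and can never drop to zero in a product when it was positive in one factor.

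\emph{Sufficiency.} Elements of type (1) are generators, namely $\phi_{g,g^{-1}}$ with $g$ a permutation. For type (2) I would imitate the proof of Theorem~\ref{t:infinite}. Write $[\alpha,\beta]=[\id_{\dom\alpha},\id_{\dom\beta}]\,[\alpha',\beta']$, where $[\alpha',\beta']$ is the generator built there by refining blocks and adjoining representatives; that construction is purely finite. It then remains to show that $[\id_I,\id_P]$ lies in $\ima\varphi$ whenever some $B\in P$ has $|B|\ge2$ and $B\not\subseteq I$. In the infinite proof the bijection $X\to X^2$ supplied spare room. Here the spare room is the point of $B\setminus I$: I would factor $[\id_I,\id_P]$ as a product of generators that merge the blocks of $P$ one step at a time. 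Each step uses $B$ to host a partial section with one point outside $I$, and each step's domain has the required partial-section form.
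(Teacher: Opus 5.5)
Your necessity argument is already complete after your ``easy case''. The ``hard case'' you flag as the main obstacle never arises. In a product $xy$ with $x$ of type (2), the first factor carries its own witness $B\in P_1=\dom\beta_1$. The product's domain partition $(P_1'\vee P_2)\bar\beta_1^{-1}$ coarsens $P_1$, and its domain set $(I_1'\cap I_2)\alpha_1^{-1}$ lies in $I_1$, so that witness survives whatever $y$ is. If $x\in\mathcal{G}$ is not of type (2), then $|\dom\alpha_1|\le 1$ or $x$ is a unit, and you handled both cases. This is precisely the paper's argument: take a factorization of minimal length, note that the first factor cannot be a unit, and observe that its witness block persists.

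Finiteness enters only in ``type (1) elements are units''. There $|\ima\alpha|=|X|$ and $|\ima\beta|=|X|$ force $\ima\alpha=X$ and an image partition into singletons, which fails for infinite $X$ (compare $[\alpha_1,\beta_1]$ in the proof of Theorem~\ref{t:infinite}). So drop the counting invariant. It would not suffice anyway: take $[\id_I,\id_{P_0}]$ with $P_0$ the partition into singletons and $2\le|I|<|X|$. Its invariant is positive, yet it satisfies neither (1) nor (2).

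The genuine gap is in sufficiency, at the one step you only gesture at: producing $[\id_I,\id_P]$. If the ``merging'' steps are partial identities $[\id_{I_j},\id_{Q_j}]$, this cannot work. Their products are $[\id_{\bigcap I_j},\id_{\bigvee Q_j}]$. A block $C$ of $\bigvee Q_j$ with $C\subseteq\bigcap I_j$ is, for every $j$, a union of $Q_j$-blocks each meeting $I_j$ at most once, hence a union of singletons, so $|C|=1$. Likewise every singleton block of $\bigvee Q_j$ lies in $\bigcap I_j$. Thus already $X=\{1,2,3,4\}$, $P=\{\{1,2\},\{3,4\}\}$, $I=\{1,2,3\}$ is out of reach, even though it satisfies (2) via $B=\{3,4\}$: a spare point $x_m\in B\setminus I$ can only glue points into its own block.

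The missing idea is a generator with non-identity $\beta$-component that detaches the spare point from a finished block and reattaches it to the next one. The paper lists $X$ block by block, with $B$ last and $x_m$ last in $B$. It then combines three kinds of generators:
\begin{itemize}
\item $k_j$, with single non-trivial block $\{x_j,x_m\}$, which glues $x_j$ to $x_m$;
\item $l_J$, with non-trivial block $J\cup\{x_m\}$, which removes $J=C_i\setminus I$ from the domain set;
\item $s_j$, whose $\alpha$-component is a partial identity and whose $\beta_j$ sends $\{x_j,x_m\}$ to $\{x_j\}$ and $\{x_{j+1}\}$ to $\{x_{j+1},x_m\}$.
\end{itemize}
The resulting product $x$ has $\alpha$-component $\id_I$ and image partition $P$. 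On its domain side, however, $x_m$ is still attached to the first block when $P$ has several blocks, so take $x^{-1}x=[\id_I,\id_P]$. Your reduction via $[\alpha',\beta']$ then finishes exactly as in the paper.
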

\begin{proof}
Suppose first that $|\dom \alpha|\geq 2$. If $[\alpha,\beta]$ satisfies condition 1, then it is a generator, and hence in the image of $\varphi$ (in fact its preimage will be a unit of $T(X)$).

So assume that there exists a set $B\in \dom \beta$ with $|B|\geq 2$, $B\not\subseteq \dom \alpha$.  Let $I = \dom \alpha, P= \dom \beta$. As in the infinite case, we first show that $[(\id_I, \id_P)]_\theta$ is in the image of $\varphi$.

Enumerate $X$ as $x_1, x_2, \dots, x_m$, such that the parts of $P$ correspond to consecutive index ranges in $\{1, \dots,m\}$, with $x_m \in B \setminus I$. We will use three different types of generators to obtain $[\id_I, \id_P]$.

For $J\subseteq I\setminus \{x_m\}$, let $Q_J$ be the partition with part $J \cup \{x_m\}$, and singletons otherwise. If $J=\{x_j\}$, we will just write $Q_{x_j}$.
We set $k_j=[\id_{I\setminus \{x_m\}},\id_{Q_{x_j}}]$, and $l_J =[\id_{I\setminus J},\id_{Q_J}]$. Moreover, let $\beta_j: Q_j \to Q_{j+1}$ be defined by $\{x_j, x_m\} \beta_j= \{x_j\}$, $\{x_{j+1}\} \beta_j= \{x_{j+1},x_m\}$, and the identity otherwise.
Set $s_j= [\id_{I \setminus \{x_m\}}, \beta_j]$. It is easy to check that all $k_j, l_J,$ and $s_j$ are generators.

Let $C_1,\ldots, C_r=B$ be the parts of $P$, in the order of their index ranges. For each $C_i=\{x_{d_i},\ldots, x_{e_i}\}$, $i=1,\ldots, r-1$, let
$J_i= C_i \setminus I$, and set
$p_i= k_{d_i} k_{d_i+1}\ldots k_{e_i} l_{J_i}s_{e_i}$.
For $C_r=B=\{x_{d_r},\ldots, x_m\}$, let $J_r= B \setminus I$ and set $p_r=k_{d_r} k_{d_r+1} \ldots k_{m-1} l_{J_r}$.

We leave it up to the reader to confirm that $[\id_I, \id_P]=p_1 \cdots p_r$.
We now can show that $\ima \varphi$ contains any $[\alpha, \beta]$ with $\dom \alpha =I, \dom \beta =P$ exactly as in the infinite case in Theorem \ref{t:infinite}.

For the converse, suppose that $a=[\alpha,\beta]_{\theta} \in \ima \varphi$, say $a=g_1\cdots g_n$ for some generators $g_i=[\alpha_i, \beta_i]$.

If $\dom \alpha=X$, then by finiteness, $\dom \alpha_i=X$ for all $i$, and hence (as the $g_i$ are generators), $\dom \beta_i$ is the partition into singletons. From this, we get that $\dom \alpha=X$ and $\dom \beta$ is the partition of $X$ into singletons, as well.

Let $\dom \alpha\neq X$. We may assume that the number of generators $n$ is the smallest possible. If $\dom \alpha_1=X$, then it is easy to see that $g_1g_2$ is a generator as well (note that this requires finiteness, which forces $g_1 \varphi ^{-1}$ to be a unit of $T(X)$).

Hence by minimality, $\dom \alpha_1 \neq X$. As $g_1$ is a generator, it follows that $\dom \beta_1$ contains a set $B'$, $|B'| \ge 2$ with $B'\not\subseteq \dom \alpha_1$. But then $\dom \beta$ contains a set $B$ with $B'\subseteq B$ and $\dom \alpha \cap B'\subseteq \dom \alpha_1$. It follows that $B$ satisfies the criteria in condition (2).

If $|\dom \alpha|=1$ then $[\alpha, \beta]_\theta=[\alpha,\id_{\{X\}}]_\theta$, which is a generator. If
$|\dom \alpha|=0$ and $|X| \neq 1$,  then $[\alpha,\beta]$, which is the empty mapping, is the generator $[\varnothing,  \id_{\{X\}}]$.
Conversely, if $|X|=1$, then $\Inn(T(X))$ only contains the trivial full automorphism.
The result follows.
\end{proof}


\section{The partial inner automorphism monoid of the endomorphism monoid of a finite abelian $G$-set}
\label{Sec:mappingG-set}
\label{rightleft2}

Let $G$ be a group with identity $e$. A (left) $G$-set is a set $X$ together with an action $\cdot: G \times X \to X$, such that for all $k,l \in G$, $ x\in X$, we have
$(kl)\cdot x= k\cdot (l \cdot x)$ and $e \cdot x=x$.  Throughout this section, we will assume that $X$ and $G$ are finite, and that $G$ is abelian.

A \emph{$G$-endomorphism} of $X$ is a function $f\in T(X)$ such that $f(k\cdot x)= k \cdot f(x)$ for all $k \in G, x \in X$. Exceptionally in \S\ref{rightleft2}, we will compose transformations from right to left, that is, $(f\circ g)(x)=f(g(x))$. This change is for compatibility with the common practice of using left instead of right $G$-sets.
With this convention, the set $\End_G(X)$ of all $G$-endomorphisms of $X$ is a submonoid of $T(X)$.

If $x,y \in X$ lie in the same $G$-orbit $O$, then it follows from the commutativity of $G$ that $x$ and $y$ have the same point stabilizer $G_x=G_y$. We set $G_O=G_x$ for any $x \in O$. 

Let $f \in \End_G(X)$. The following facts about $f$ are easily checked:
\begin{itemize}
\item $f$ maps $G$-orbits to $G$-orbits;
\item if $f(x)=y$ then $G_x \le G_y$;
\item if $x\in X$ lies in the $G$-orbit $O$, then $f(x)$ determines $f(y)$ for all $y \in O$ as $f(k\cdot x)=k \cdot f(x)$.
\end{itemize}
We use $G_B$ to refer to the set-wise stabilizer of $B$, for any $B \subseteq X$.

We will adopt our results from \S\ref{Sec:mappingT(X)} to the case of the endomorphism monoid of a finite $G$-set $X$, where $G$ is a finite abelian group.

A partition $P$ is called $G$-invariant if the corresponding equivalence relation $\tau$ on $X$ satisfies $(x,y) \in \tau \Rightarrow (k \cdot x, k \cdot y) \in \tau$ for all $x,y \in X, k \in G$. Clearly, if $\phi \in \End_G(X)$, then $\ker \phi$ corresponds to a $G$-invariant partition, and if $P$ is a $G$-invariant partition, then $G$ has an induced action on $P$. We will call a $G$-invariant partition $P$
on a $G$-set $X$ \emph{non-null} if for each $B_i \in P$ there exists $x_i \in B_i$ such that $G_x \le G_{x_i}$ for all $ x \in B_i$. In this case, we set  $G^{B_i}=G_{x_i}$.

We will often use the following construction, the correctness of which is easy to check: after picking representatives $B_i$ for the $G$-orbits on $P$, we may obtain a set of representatives $\{x_j\}$ for the $G$-orbits on $X$ with $\{x_j\}\subseteq \bigcup B_i$ by separately picking representatives for the $G$-orbits on $X$ that intersect each $B_i$, and then taking the union all such sets. After we choose a mapping $h:\{x_j\} \to X$ such that $G_{x_j} \le G_{h(x_j)}$ for all $j$, we can extend $h$ to a unique and well-defined $G$-endomorphism of $X$ by setting $h(l \cdot x_j) = l \cdot h(x_j)$.
If, in addition, $h(x_j)=h(x_{j'})$ whenever $[x_j]_P=[x_{j'}]_P$, and $G_{[x_j]_P} \le G_{h(x_j)}$ for all $x_j$, then $P \subseteq \ker(h)$.

Given a partition $P$ of $X$, and a subset $I \subseteq X$, we say that $I$ is  \emph{accessible} from $P$ if for every $B \in P$, there exists an $i \in I$ with $G_B \le G_i$, and \emph{non-accessible} for $P$ otherwise. We say that an element $i$ of such a set $I$  is a $\emph{sink}$, if $i$
is the unique member of $I$ satisfying $G_i=G$.

We will now prove results for the partial automorphism monoid of $\End_G(X)$, following closely the outline of \S\ref{Sec:mappingT(X)} on the automorphism monoid of $T(X)$, while also relying on some of that section's results.
Throughout, we will blur the distinction between partitions and their corresponding equivalence relations.

\begin{theorem}\label{t:tauI-sets}
Let $g,h \in \End_G(X)$ and
\[
D_{g,h}= \{x \in \End_G(X)\,:\, ghx=xgh=x\}\,.
\]
Then there exist a non-null $G$-invariant partition  $P$ of $X$ and a union of $G$-orbits $I$ of $X$ (possibly empty),  such that:
\begin{itemize}
\item[\textup{(1)}] each $P$-class contains at most one element of $I$;
\item[\textup{(2)}] for each $i \in I$, if $x \in [i]_P$, then $G_x \le G_i$;
\item[\textup{(3)}]  if $i \in X$ satisfies $G_x < G_i$ for all $x \in [i]_p \setminus \{i\}$, then $i \in I$;
\item[\textup{(4)}] for each $B \in P$, the quotient $G_{B}/G^B$ is cyclic;
\item[\textup{(5)}]  $D_{g,h}$ consists of all $G$-endomorphisms $\phi$ with $\ima \phi\subseteq I$ and $P \subseteq \ker \phi$.
\end{itemize}

Conversely, suppose that $P$ is a non-null $G$-invariant partition of $X$ and $I$ is a union of $G$-classes  such that \textup{(1)} to \textup{(4)} hold. Then there exist $g,h\in \End_G(X)$ such that \textup{(5)} holds.

In the above constructions, $D_{g,h}\neq \varnothing$ if and only if $I$ is accessible from $P$, in which case $I$ is uniquely determined by $D_{g,h}$.
\end{theorem}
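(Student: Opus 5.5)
Throughout, compose right to left as in this section, and put $p=g\circ h\in\End_G(X)$. Then $D_{g,h}=\{\phi\in\End_G(X): p\circ\phi=\phi\circ p=\phi\}$ depends only on $p$. I would mirror the proof of Theorem~\ref{t:PI-sets}: let $I$ be the set of fixed points of $p$, and let $P$ be the partition of $X$ into connected components of the functional graph of $p$.

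\textbf{Forward direction.} Since $p$ commutes with the action of $G$, the set $I$ is a union of $G$-orbits and $P$ is $G$-invariant: each $k\in G$ maps components onto components. Because $X$ is finite, each component $B$ contains exactly one cycle $C_B$ of $p$, and every $x\in B$ satisfies $p^n(x)\in C_B$ for some $n$. Stabilizers increase along $p$, since $G_x\le G_{p(x)}$. Going once around the cycle therefore forces all points of $C_B$ to have the same stabilizer, and every $x\in B$ has $G_x$ below it. This shows $P$ is non-null with $G^B=G_y$ for any $y\in C_B$, and it also gives (2).

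The remaining properties follow from the cycle structure.
\begin{itemize}
\item[(1)] A fixed point is a cycle of length one, so each component contains at most one element of $I$.
\item[(3)] If $G_i$ strictly exceeds $G_x$ for every other $x\in[i]_P$, then $i$ must lie on $C_B$, since cycle points have maximal stabilizer. Then $C_B=\{i\}$, because all cycle points share a stabilizer, and so $i\in I$.
\item[(4)] Each $k\in G_B$ maps $C_B$ to itself. Fix $y\in C_B$ and write $k\cdot y=p^{j}(y)$. The map $k\mapsto j \bmod |C_B|$ is a homomorphism $G_B\to\Z_{|C_B|}$, using $k\cdot p^j(y)=p^j(k\cdot y)$. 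Its kernel is $G_y=G^B$, so $G_B/G^B$ is cyclic.
\item[(5)] The equality $p\circ\phi=\phi$ says that $p$ fixes $\ima\phi$, i.e.\ $\ima\phi\subseteq I$. The equality $\phi\circ p=\phi$ says that $\phi$ is constant along edges of the graph, i.e.\ $P\subseteq\ker\phi$. This is verbatim the argument in Theorem~\ref{t:PI-sets}.
\end{itemize}

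\textbf{Converse.} Take $g=\id$, so $p=h$, and build $h$ by the orbit-representative construction described before the theorem. Choose representatives $B$ of the $G$-orbits on $P$, and for each $B$ choose $x_B\in B$ with $G_{x_B}=G^B$.

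If $B\cap I=\{i\}$, send every point of $B$ to $i$. This is legitimate by (2). Moreover every $k\in G_B$ fixes $i$, since $k\cdot i\in B\cap I$ and (1) applies, so $G_B\le G_i$ and the kernel condition holds.

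If $B\cap I=\varnothing$, I must create a cycle of length at least $2$ inside $B$. Let $c$ generate $G_B/G^B$, which is cyclic by (4), and let $n$ be its order.
\begin{itemize}
\item If $n\ge2$, set $h(x_B)=c\cdot x_B$, so that $G_B$-equivariance produces the cycle $x_B\mapsto c\cdot x_B\mapsto\cdots$ of length $n$.
\item If $n=1$, apply (3) to $x_B\notin I$. It yields $x'\in B\setminus\{x_B\}$ with $G_{x'}\ge G_{x_B}$, hence $G_{x'}=G^B$ by non-nullness. The point $x'$ is not in the $G$-orbit of $x_B$: if $x'=k\cdot x_B$, then $k\cdot B=B$, so $k\in G_B=G^B$ and $x'=x_B$. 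Swap the orbits of $x_B$ and $x'$ by $x_B\leftrightarrow x'$.
\end{itemize}
In both cases, send the remaining orbit representatives in $B$ to $x_B$, and extend equivariantly.

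It remains to check four things: $h$ is well defined as a $G$-map, its fixed points are exactly $I$, its components are exactly the blocks of $P$, and so (5) follows from the forward argument. This case analysis for blocks missing $I$, and its compatibility with $G$-equivariance, is the step I expect to be the main obstacle.

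\textbf{Final claim.} Suppose $\phi\in D_{g,h}$. Then $\phi$ maps each block $B$ to a single point $i\in I$, and that point must satisfy $G_B\le G_i$. Hence $D_{g,h}\neq\varnothing$ forces $I$ to be accessible from $P$. Conversely, if $I$ is accessible, choose such an $i$ for each block-orbit representative and extend equivariantly to obtain an element of $D_{g,h}$.

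For uniqueness, each $i\in I$ lies in the image of some element of $D_{g,h}$: send the orbit of $[i]_P$ to the orbit of $i$, which is allowed because $G_{[i]_P}\le G_i$, and send the other blocks to accessible points. Hence $I$ is the union of the images of elements of $D_{g,h}$, and is determined by $D_{g,h}$.
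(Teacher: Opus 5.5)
Your proposal is correct and follows essentially the same route as the paper. You take $I$ to be the fixed points of $p=g\circ h$ and $P$ its functional-graph components, inherit (5) from the $T(X)$ case, prove the converse with $g=\id$ and an equivariant $h$ built on orbit representatives, and finish with the same accessibility and image-union arguments. Your homomorphism $G_B\to\Z_{|C_B|}$ with kernel $G^B$ is a cleaner proof of (4) than the paper's computation, and your explicit appeal to (3) when $G_B=G^B$ makes visible a point that the paper's cycle construction relies on without comment.
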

\begin{proof}
Assume first that $g,h \in \End_G(X)$, and let $D=D_{g,h}$. Clearly $D$ only depends on the product $p=gh$. As $g,h \in \End_G(X) \subseteq T(X)$, we have that
\[
D = \{t \in T(X)\,:\, ght=tgh=t\} \cap \End_G(X)\,.
\]
Hence, \textup{(5)} holds if we define $I, P$ as in Theorem \ref{t:PI-sets}, that is, $I$ is the set of fixed points of $p$, and $P$ is the partition into connected components of the function graph of $p$.  As $p$ is a $G$-endomorphism, $I$ is a union of  $G$-orbits and $P$ is $G$-invariant. Moreover, that $P$ is non-null and properties \textup{(1)} to \textup{(3)} follows from this description.

To show \textup{(4)}, let $B \in P$, and $x$ any element in the cyclic part of $B$ (as a connected component of the function graph of $p$), so that $G^B=G_x$. Let $O$ be the $G$-orbit containing $x$, then $G_B=G_{O \cap B}$.

Let $n$ be the smallest positive integer such that $p^n(x)$ is in $O$, say $p^n(x)= l\cdot x$ for $l \in G$. Then $p^m(x) $ is in $O$ if and only if $n$ divides $m$, in which case $p^m(x)= l^{m/n} \cdot x$. Moreover, let $y \in O \cap B$, with $k' \cdot y$ for some $j' \in G$.  Because $x$ is on the cyclic part of $B$, there exist an $m' >0$, such that $p^{m'}(y)=x$. This implies that $p^{m'(|k'|-1)}(x)=y$. Therefore,
$y$  is in the cycle of $B$, and hence $n$ divides $m'(|k'|-1)$, and $y=l^{m'(|k'|-1)/n} \cdot x$. Thus $\langle lG^B\rangle=\langle lG_x\rangle=G_B/G_x=G_B/G^B$.

Conversely, suppose that $P$ is a non-null $G$-invariant partition of $X$ and $I$ is a union of $G$-classes  such that \textup{(1)} to \textup{(4)} hold. Let $g\in \End_G(X)$ be the identity, and define $h\in \End_G(X) $ as follows:

Choose sets of representatives $\{B_i\}$ and $\{x_{j}\}\subseteq \bigcup B_i$ as outlined in the introduction of this section. Let $B \in P$ such that $i \in B$ for some $i \in I$. As $i$ is unique in $I \cap P$, and $I$ is closed under $G$, it follows that $G_B = G_i$. Hence if we set $h(x_j) =i$ for all
 $x_j \in B$, the resulting induced function will satisfy $h(B)=\{i\}$, and every $B'$ from the orbit of $B$ will also map to an element of $I$. Suppose instead that $B \cap I= \varnothing$. Without loss of generality, let $x_{1}, \dots, x_n \in B$ be all the representatives with $G_x=G^B$, and let $l\in G$ be such that $lG^B$ generates $G_B/G^B$. Set $h(x_i)=x_{i+1}$ for $i=1, \dots, n-1$, and
$h(x_n)=l \cdot x_1$. For any other representative $x_j \in B$, set $h(x_j)=x_1$. Then the resulting function $h$ has $B$ as one of the connected components of its function graph, and the same holds for any $P$-part from the $G$-orbit of $B$.

It is now straightforward to check that $gh$ has $I$ as its sets of fixed points, and $P$ as its partition into connected components of the function graphs. By the first part of the theorem, 5.\ holds for this $g$ and $h$.

Now consider any pair $(P,I)$, satisfying all the given conditions. If $I$ is non-accessible from $P$, and this failure is witnessed by $B \in P$, then there is no $G$-endomorphism that can map $B$ to an element of $I$, and hence $D_{g,h}$ is empty.
For the converse, let $I$ be accessible. Choose a set of representatives $\{B_i\}$ and $\{x_{j}\}\subseteq \bigcup B_i$ as above. For each $B_i$ pick a $k_i \in I$ such that $G_{k_i}$ contains $G_{B_i}$, and set $t(x_j)=k_i$ for all $x_j \in B_i$. It is straightforward to check that the extension of $t$ to all of $X$ is a $G$-endomorphism with  $\ima t\subseteq I$ and $P \subseteq \ker t$.

Finally assume that  $P,I$ satisfy the conditions \textup{(1)} to \textup{(5)}  for some non-empty $D_{g,h}$, noting that this implies that $I$ is accessible from $P$.  We claim that $I$ is the union of all images of $G$-endomorphisms in $D_{g,h}$ and hence uniquely determined by $D_{g,h}$. Clearly $I$ contains this union.

For the converse inclusion, given $i \in I$, we may choose a set of orbit representatives that includes $i$. We claim that we can define a $G$-endomorphism $t_i$ with $t_i(i)=i$ and $P \subseteq \ker t_i$.
Indeed, as $[i]_P$ contain only one element of $I$ and $I$ is closed under $G$, it follows that  $G_{[i]_P}=G_i$, which allows the assignment $t_i(i)=i$. Because $I$ is accessible from $P$, and because of our other conditions, we may define $t_i$ on the other representatives such that  $t_i(x_j) \in I$, $G_{[x_j]_P} \le G_{t_i(x_j)}$, and $t_i(x_j)=t_i(x_{j'})$ whenever $[x_j]_P=[x_{j'}]_P$. This way, we obtain the desired $G$-endomorphism $t_i$. The uniqueness of $I$ follows.
\end{proof}

For any non-null $G$-invariant partition $P$ of $X$ and union of $G$-orbits $I \subseteq X$, we will use the notation $D_{P,I}$ to refer to the set of $\phi\in \End_G(X)$ with $\ima \phi\subseteq I$, $P\subseteq\ker \phi$, where we also include such $I,P$ which do not satisfy any of the other condition of Theorem \ref{t:tauI-sets}.

We will next address  to which extent $P$ is determined by $D_{g,h}$.

\begin{lemma}\label{l:Pmerge}
Suppose that $P,I$ satisfy the conditions of  Theorem \ref{t:tauI-sets}, and that $D_{P,I}\neq \varnothing$. For each $B\in P$, let
\[
G'_B= \bigcap_{i \in I, G_B \le G_i}G_i\,.
\]
Define the binary relation $\sim_P$ on $P$ containing exactly the following pairs:
\begin{enumerate}
\item[(a)] $(B, l\cdot B)\in \sim_P$, for all $B \in P$ and $l \in G'_B$;
\item[(b)] if $I$ has a sink $i'$, $(B,B')\in\sim_P$ whenever $G_B, G_{B'} \not\le G_i$, for all $ i \in I\setminus\{i'\}$.
\end{enumerate}
Then $\sim_P$ is an equivalence relation. Let $P'$ be the partition of $X$ induced by the equivalence classes of $\sim_P$. Then $P'$ is a $G$-invariant, non-null partition of $X$, such that $D_{P,I}=D_{P',I}$. Moreover,
$(P',I)$ satisfy conditions \textup{(1)}-\textup{(3)} of \textup{Theorem \ref{t:tauI-sets}}.

Conversely, suppose that $P''$ is a $G$-invariant partition such that $D_{P,I}=D_{P'',I}$. Then $P''$ is a refinement of $P'$.
\end{lemma}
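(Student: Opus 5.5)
The guiding principle is to describe $D_{P,I}$ in terms of where individual blocks may be sent. For $t\in\End_G(X)$ with $P\subseteq\ker t$, a block $B$ collapses to one point $t(B)$, and equivariance forces $G_B\le G_{t(B)}$. Conversely, by the representative construction at the start of this section, any choice of targets $t(B_i)\in I$ for orbit representatives $B_i$ of $P$ with $G_{B_i}\le G_{t(B_i)}$ extends uniquely to an element of $D_{P,I}$. So $D_{P,I}$ is in bijection with such assignments, and the coarsest partition $P''$ with $D_{P,I}=D_{P'',I}$ should be exactly the kernel-intersection $\bigcap_{t\in D_{P,I}}\ker t$. The plan is to show this intersection equals the partition $P'$ induced by $\sim_P$. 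This gives $D_{P,I}=D_{P',I}$ and the converse statement at once: if $D_{P'',I}=D_{P,I}$, then $P''\subseteq\ker t$ for every $t\in D_{P,I}$, so $P''$ refines $P'$.

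I would establish the inclusion $P'\subseteq\ker t$ for all $t\in D_{P,I}$ first.
\begin{itemize}
\item For pairs of type (a): $t(l\cdot B)=l\cdot t(B)$, and $t(B)=i$ with $G_B\le G_i$, so $l\in G'_B\le G_i$ fixes $i$.
\item For pairs of type (b): a block $B$ with $G_B\not\le G_i$ for every non-sink $i$ can only be sent to the sink $i'$ (accessibility guarantees it goes somewhere, and $G_B\le G=G_{i'}$). Hence any two such blocks are identified.
\end{itemize}
Since $\ker t$ is an equivalence relation, this also shows that the transitive closure of $\sim_P$ lies in every kernel. However, the statement claims $\sim_P$ is already an equivalence, so I would check that separately:
\begin{itemize}
\item Reflexivity and symmetry are clear, since $G'_{l\cdot B}=G'_B$ because stabilizers in an abelian group are orbit-invariant.
\item Transitivity within (a) holds because $G'_B$ is a subgroup.
\item Mixing (a) and (b) works because the (b)-condition depends only on $G_B$, which is constant on $G$-orbits.
\end{itemize}

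For the reverse inclusion, suppose $B\not\sim_P B'$. I would build $t\in D_{P,I}$ separating them, in two cases.
\begin{itemize}
\item If $B,B'$ lie in different $G$-orbits and not both are forced to the sink, choose the image of the representative of one orbit to be some non-sink $i$ and the other to be a different point of $I$.
\item If $B'=l\cdot B$ with $l\notin G'_B$, pick $i\in I$ with $G_B\le G_i$ and $l\notin G_i$. Sending $B$ to $i$ gives $t(B')=l\cdot i\ne i$.
\end{itemize}
The main obstacle is the first case: it needs at least two admissible target points, and this requires care with how the sink and accessibility interact. If the only admissible target for both orbits is a single point, that point must be the sink, which is exactly the situation (b) covers.

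Finally I would verify the properties of $P'$.
\begin{itemize}
\item $G$-invariance follows from $G'_{k\cdot B}=G'_B$ and orbit-invariance of condition (b).
\item Condition (1): two blocks each containing a point of $I$ can only be merged by (a) if $l$ fixes $i$, or by (b) if both are sink blocks. In either case they share the same $I$-point, since $I$ meets each block in at most one point and $i\in B$ implies $G_B=G_i$.
\item Conditions (2), (3) and non-nullness: merged blocks inherit a maximal-stabilizer point. This point is the $I$-point when present. When no $I$-point is present, (a)-merged translates have identical stabilizer profiles, and for (b)-merges I would pick the maximal point among the finitely many merged blocks. I expect this last sub-step to need a small extra argument, possibly using that all such blocks map to the sink.
\end{itemize}
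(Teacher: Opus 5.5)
Your overall plan follows the paper's proof. You show $t(B)=t(B')$ for all $t\in D_{P,I}$ whenever $B\sim_P B'$, which gives $D_{P,I}=D_{P',I}$. You then separate non-related blocks by explicit endomorphisms in the two cases ``different $G$-orbits'' and ``$B'=l\cdot B$ with $l\notin G'_B$''. Phrasing this as $P'=\bigcap_{t\in D_{P,I}}\ker t$ is a harmless improvement: it gives the converse directly, instead of via the paper's reduction to the case where $P$ refines $P''$. Your checks that $\sim_P$ is an equivalence relation and that condition (1) holds are also fine.

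The gap is in the final step: non-nullness of $P'$, and condition (2), for the class created by (b). Your plan to ``pick the maximal point among the finitely many merged blocks'' does not work, because stabilizers in different blocks can be incomparable. For instance, take $G=\mathbb{Z}_2\times\mathbb{Z}_2=\langle a,b\rangle$, let $I=\{i'\}$ be a single $G$-fixed point, and let $P=\{\{i'\},O_1,O_2\}$, where $O_1,O_2$ are orbits with stabilizers $\langle a\rangle$ and $\langle b\rangle$. The quantifier in (b) ranges over $I\setminus\{i'\}=\varnothing$, so (b) merges all blocks. Yet $O_1\cup O_2$ contains no point whose stabilizer contains all the others. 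Knowing that every $t$ sends all (b)-blocks to the sink does not by itself produce a suitable point inside the class.

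The missing observation uses a fact you already noted: $i\in B$ forces $G_B=G_i$, so $G_{[i']_P}=G$. Since $G\not\le G_i$ for every non-sink $i\in I$, the block $[i']_P$ itself satisfies the (b)-condition. Hence every (b)-class contains $i'$, and $G_{i'}=G$ dominates every stabilizer. This is exactly how the paper gets non-nullness, and it also gives (2) for $i'$. For every other $i\in I$, the $P'$-class of $i$ is just $[i]_P$, because $G'_{[i]_P}=G_i$. In particular, your case ``no $I$-point present but (b)-merges'' is vacuous.

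Condition (3) needs no maximal-point argument at all. Since $[i]_P\subseteq[i]_{P'}$, the hypothesis of (3) for $P'$ implies the hypothesis for $P$, and (3) for $P$ then gives $i\in I$.
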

\begin{proof}
It is clear that $\sim_P$ is an equivalence relation and compatible with the action of $G$.

The elements in $B$ and $l \cdot B$ have the same set of stabilizers, while any block $B$ satisfying condition (b) is contained in $[i']_{P'}$ with $G_{i'}=G.$ Hence $P'$ is non-null.

As $P$ is a refinement of $P'$, we have $D_{P',I}\subseteq D_{P,i}$. For the other inclusion, let $t \in D_{P,I}$, $B \in P$, and $i \in I$, with $t(B)=\{i\}$. If  $l \in G'_B$ and $B'=l \cdot B$, then
 $t(B')=t(l\cdot B)=l\cdot t(B)=l \cdot \{i\}=\{i\}$. Hence $t(B)=t(B')$ for all $t \in D_{P,I}$.
If $I$ has a sink $i'$ and $B$ and $B'$ are as in (b), we obtain $t(B)=t(B')= \{i'\}$ for all $t \in D_{P,I}$, as $\{i'\}$ is the only possible image for $B$ and $B'$. Hence $t(B)=t(B')$ whenever $B\sim_PB'$.
It follows that $D_{P',I}=D_{P,I}$.

If $i\in I \cap B$ for  some $B \in P$, then $G'_B=G_B=G_i$. Hence $[i]_P=[i]_{P'}$ for all $i \in I \setminus \{i'\}$. This immediately implies  condition \textup{(1)} from Theorem  \ref{t:tauI-sets}. Condition (b) follows in connection with $G_{i'} =G$. Finally, the premise of condition \textup{(3)} from Theorem  \ref{t:tauI-sets} only holds in a $P'$-block $B$ if it also holds in a $P$-block $B$ with $B \subseteq B'$. Condition (3) follows because it was true for $P$.

Conversely, suppose that $B,B'\in P$ are such that  $B \not\sim_P B'$. If $B$ and $B'$ do not lie in the same $G$-orbit on $P$, then there exists $t \in D_{P,I}$ with $t(B) \neq t(B')$ as long as at least one of $B,B'$ has two possible images in $I$. As (b) does not hold for $B,B'$, this is always the case.

If instead $B'= l \cdot B$ for some $l \in G$, then the falsehood of (1) guarantees that there is an $i\in I $ with $G_B \le G_i$ and $l \notin G_i$. Hence there exists a $t \in D_{P,I}$ with $t(B)=\{i\} \ne\{l\cdot i\}= t(B')$.

If a $G$-invariant partition $P''$ satisfies  $D_{P,I}=D_{P'',I}$, we my assume without loss of generality that it has $P$ as a refinement. The results from the previous two paragraphs show that $P''$ does not merge any $P$-blocks not satisfying (a) or (b). Hence $P'$ is the coarsest  such partition.
\end{proof}

We remark that condition \textup{(4)} of Theorem \ref{t:tauI-sets} might not be true for $(P',I)$. This implies that there may not exist $g,h \in \End_G(X)$, such that $P'$ consists of  the connected components of the function graph of $gh$.

Clearly, $P'$ as constructed in Lemma \ref{l:Pmerge} is unique for each pair $(P,I)$ satisfying the conditions of the lemma. Hence, we set $\overline{(P,I)}=(P',I)$. In addition, we set $\overline{(P,I)}=(\{X\}, \varnothing)$ whenever $I $ is not accessible from $P$. Clearly, $\overline{\overline{(P,I)}}=\overline{(P,I)}$. We will refer to $\overline{(P,I)}$  as the \emph{standard pair} for $(P,I)$.

More generally, we will refer to $(P,I)$ as a \emph{standard pair} if either $(P,I)=(\{X\},\varnothing)$ or the following hold:
\begin{enumerate}
\item $P$ is a $G$-invariant non-null partition of $X$;
\item $I\subseteq X$ is a union of $G$-orbits;
\item $I$ is accessible from $P$;
\item conditions \textup{(1)-(3)} of Theorem \ref{t:tauI-sets} hold;
\item $\overline{(P,I)}=(P,I)$.
\end{enumerate}
Given a standard pair $(P,I)$ it might not be possible to use the construction from the second half of Theorem  \ref{t:tauI-sets} due to the lack of condition \textup{(4)}. The next result shows when a standard pair arises from two $G$-endomorphisms.

\begin{lemma}\label{l:spairs}
Let $(P,I)\neq (\{X\},\varnothing)$ be a standard pair. Then $g,h \in \End_G(X)$ with $D_{g,h}=D_{P,I}$ exist if and only if for every $B\in P$ not intersecting $I$, there exists an $l_B \in G_B$ such that
\begin{equation}\label{eqt}
\langle l_B, G^B\rangle \le G_i \Rightarrow G_B \le G_i
\end{equation}
for all $i \in I$.

If $(P,I)=(\{X\},\varnothing)$ then there exist $g,h \in \End_G(X)$ such that $D_{g,h}=D_{P,I}$ if and only if
\[
|\{x\in X: G_x=G\}|\neq 1\,.
\]
\end{lemma}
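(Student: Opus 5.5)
The plan is to reduce everything to the description of $D_{g,h}$ via $p=gh$ from Theorem~\ref{t:tauI-sets}. There, $I_p$ is the fixed-point set of $p$, $P_p$ is the partition into connected components of the functional graph of $p$, and $D_{g,h}=D_{P_p,I_p}$. Since $g$ may be taken to be the identity, the question becomes: for which standard pairs $(P,I)$ is there a $G$-endomorphism $p$ with $\overline{(P_p,I_p)}=(P,I)$? By the uniqueness statements in Theorem~\ref{t:tauI-sets} and Lemma~\ref{l:Pmerge}, $D_{p}=D_{P,I}$ forces $I_p=I$ and makes $P_p$ a refinement of $P$ whose $\sim$-merge is $P$.

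\emph{Necessity, for $(P,I)\neq(\{X\},\varnothing)$.} Fix $B\in P$ with $B\cap I=\varnothing$, and pick a component $C$ of $P_p$ with $C\subseteq B$. Then $C$ contains no fixed point, so its cycle has length at least one without being a fixed point. As in the proof of Theorem~\ref{t:tauI-sets}(4), take $x$ on the cycle and $l\in G$ with $p^n(x)=l\cdot x$; then $G_C/G_x=\langle lG_x\rangle$ and $G^C=G_x$. The candidate is $l_B:=l$. We must show $l\in G_B$ and that $G^B=G^C$ can be arranged by choosing $C$ with $G_x$ minimal in $B$. To prove \eqref{eqt}, suppose $\langle l,G^B\rangle\le G_i$ but $G_B\not\le G_i$. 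Then $G_C=\langle l,G_x\rangle\le G_i$. Since $C$ and $B$ have the same accessibility behaviour modulo the merge of Lemma~\ref{l:Pmerge}, we want $G_C$ and $G_B$ to generate the same candidate targets in $I$. The merges in (a) use $l'\in G'_C$, which fixes every admissible target, so $B$ arises from $C$ by translating by elements of $G'_C$, plus possibly the sink merges in (b). This gives $G_B\le G'_C\cdot G_C$, whence $G_B\le G_i$, a contradiction. I expect the sink case (b) to need a separate check. There the target set is only the sink $i'$, and $G_{i'}=G$ makes the implication trivial.

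\emph{Sufficiency.} This adapts the construction in the converse part of Theorem~\ref{t:tauI-sets}. Blocks meeting $I$ are handled exactly as there. For each $G$-orbit representative $B$ disjoint from $I$, I would not build $B$ itself as a single component. Instead, take a point $x\in B$ with $G_x=G^B$ and build a cycle component $C\ni x$ with rotation element $l_B$, so that $G_C=\langle l_B,G^B\rangle$. Every other representative point of $B$ is sent into this cycle; this is allowed because $G^B$ is the minimum stabilizer in $B$. The components inside $B$ are then the translates $k\cdot C$ for $k\in G_B$. Condition \eqref{eqt} says exactly that $G'_C\supseteq G_B$, so all these translates are $\sim$-merged by rule (a) into $B$, and $\overline{(P_p,I)}=(P,I)$. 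The other blocks of $P$ must not get merged further; this holds because $(P,I)$ is standard and the admissible targets of $C$ coincide with those of $B$ by \eqref{eqt}. I expect this bookkeeping of the stabilizers of $C$ versus $B$ to be the main obstacle, and I would verify it orbit by orbit.

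\emph{The case $(\{X\},\varnothing)$.} Here $D_{P,I}=\varnothing$, and we need a $p$ whose $I_p$ is not accessible from $P_p$. Suppose the set of $G$-fixed points has exactly one element $z$. Every $p$ satisfies $p(z)=z$, so $z\in I_p$ and $\{z\}$ is accessible from every block. Hence $D_p\neq\varnothing$, since the constant map to $z$ lies in it. If there are no fixed points, take $p=\mathrm{id}$; the constant maps to a point are not $G$-maps, and any other target block fails accessibility, so $D$ is empty provided we check that no $G$-endomorphism lands in $I$ with the required kernel. If there are at least two fixed points $z_1,z_2$, let $p$ swap them and fix everything else. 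Then the component $\{z_1,z_2\}$ has stabilizer $G$ and no fixed point, while $I_p$ contains no $G$-fixed point, so $I_p$ is not accessible and $D_p=\varnothing$.
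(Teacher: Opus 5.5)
The genuine gap is in the case $(P,I)=(\{X\},\varnothing)$, direction ``number of $G$-fixed points $\neq 1$ $\Rightarrow$ some $D_{g,h}=\varnothing$''.

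\begin{itemize}
\item If $X$ has no $G$-fixed points, you take $p=\mathrm{id}$. But $D_{\mathrm{id},\mathrm{id}}=\End_G(X)$ contains $\mathrm{id}_X$. In your own terms, $I_p=X$, $P_p$ is the partition into singletons, and every $\{x\}$ accesses $x\in I_p$.
\item If there are at least two fixed points, swapping $z_1,z_2$ and fixing everything else works only when there are exactly two. A third $G$-fixed point $z_3$ lies in $I_p$ and is an admissible target for the block $\{z_1,z_2\}$. So the $G$-endomorphism sending $z_1,z_2$ to $z_3$ and fixing every other point lies in $D_p$.
\end{itemize}

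The missing idea, which the paper uses, is to make $p$ fixed-point-free. When the number of $G$-fixed points is not $1$, every $x$ has some $x'\neq x$ with $G_{x'}=G_x$: a translate $k\cdot x$ with $k\notin G_x$ if $G_x\neq G$, and another fixed point otherwise. Set $h(x_j)=x_j'$ on orbit representatives and extend equivariantly. This gives $h\in\End_G(X)$ with $h(k\cdot x_j)=k\cdot x_j'\neq k\cdot x_j$. With $g=\mathrm{id}$ you get $I_{gh}=\varnothing$, hence $D_{g,h}=\varnothing$.

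For $(P,I)\neq(\{X\},\varnothing)$ your route is essentially the paper's. Necessity takes $l_B$ to be the rotation element of the cycle of a component $C\subseteq B$ of the function graph of $gh$. It then uses that $B$ is a union of $G'_C$-translates of $C$ to get $G_B\le G'_C$. In the sufficiency part, your translates $k\cdot C$ are exactly the classes of the paper's $\theta_B$, and \eqref{eqt} is equivalent to $G_B\le G'_C$. Two points need fixing.

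\begin{itemize}
\item $G^B$ is the \emph{largest} point stabilizer in $B$, not the smallest. Sending every representative of $B$ into the cycle through $x$ is allowed precisely because $G_y\le G_x=G^B$ for all $y\in B$. In the necessity part, $G^C\le G^B$ is automatic, which already gives $G_C\le\langle l,G^B\rangle$. So no special choice of $C$ is needed.
\item If the given $l_B$ lies in $G^B$, then $p(x)=l_B\cdot x=x$ is a fixed point outside $I$. You must either run the cycle through all representatives of $B$ with stabilizer $G^B$, as in the proof of Theorem~\ref{t:tauI-sets}, or replace $l_B$ by an element of $G_B\setminus G^B$. The replacement still satisfies \eqref{eqt}, since enlarging $\langle l_B,G^B\rangle$ only strengthens the hypothesis of \eqref{eqt}. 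Condition (3) for the standard pair, together with $B\cap I=\varnothing$, forces at least two points of $B$ with stabilizer $G^B$, so one of these options is always available. The paper's appeal to Theorem~\ref{t:tauI-sets} tacitly needs the same observation.
\end{itemize}
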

\begin{proof}
Let $(P,I)\neq (\{X\},\varnothing)$, and assume that $l_B$ exists for every $B$ not intersecting $I$. We want to use the second part of Theorem  \ref{t:tauI-sets} after first constructing a refinement $P'$ of $P$ that satisfies both condition \textup{(4)} of Theorem  \ref{t:tauI-sets} and $D_{P,I}=D_{P'\!,I}$.

Let $B \in P$. If $B$ intersect $I$, then it follows from conditions \textup{(1)} and \textup{(2)} of Theorem \ref{t:tauI-sets} that  $G_B/G^B$ is trivial. If $B$ does not intersect $I$, consider a set of representatives $R$ for the $G$-orbits intersecting $B$, and the relation
\[
\theta_B=\{(l\cdot x_j, l' \cdot x_{j'}): l,l' \in G_B,  x_j,x_{j'}\in R, l l'^{-1} \in \langle l_B, G^B\rangle\}
\]
on $B$. Because $G_x \le G^B$ for all $x \in B$, $\theta_B$ is a well-defined equivalence relation on $B$. Let $P_B$ the corresponding partition of $B$, and choose an $x_B \in B \cap R$ with $G_{x_B}=G^B$. For each $B' \in P_B$, if $l \cdot x_j \in B'$, for some $l \in G$ and $x_j \in R$, then $l \cdot x_B \in B'$ and hence $G^{B}=G_{x_B}= G_{l \cdot x_B}=G^{B'}$.
Moreover, we have that $G_{B'}= \langle l_B, G^B\rangle=\langle l_B, G^{B'}\rangle$ and hence $G_{B'}/G^{B'}$ is cyclic. Now if $t \in \End_G(B)$,
(\ref{eqt}) shows that if $t(B')=\{i\}$ for some $i \in I$, then $t(B)=\{i\}$.

Repeating the above construction for each $B \in P$ not intersecting $I$, we obtain a refinement $P'$ of $P$, such that for each $B' \in P'$, the quotient $G_{B'}/G^{B'}$ is cyclic, and such that $D_{P,I}=D_{P',I'} $. We can now use the second part of   Theorem  \ref{t:tauI-sets} to construct the required $g,h$.

Conversely, if $g,h $ exist with $D_{g,h}= D_{P,I}$, then $D_{P,I}=D_{P',I}$ where $P'$ consist of the connected components of the function graph of $gh$, by the proof of  Theorem \ref{t:tauI-sets}. The theorem moreover shows that for each $B'\in P'$,  $G_{B'}/G^{B'}$ is cyclic. Let $l_{B'}$ a cyclic generator of this group, so that $G_{B'}=\langle l_{B'},G^{B'}\rangle$.

As $(P,I)$ is a standard pair, we have that $\overline{(P',I)}=(P,I)$. If $B \in P$ is a block not intersecting $I$, then $B$ must be obtained by merging blocks of $P'$  using the process (1) of Lemma \ref{l:Pmerge}. Let $B'\in P'$ be one of the blocks with $B' \subseteq B$, and consider $l_{B'}$. We claim that we may choose $l_B=l_{B'}$.

As $B$ is a union of translates of $B'$, it follows that  that $G^{B'}=G^B$. Moreover, $l_{B'} \in G_{B'} \le G_B$. Suppose that $i \in I$ satisfies  $G_{B'}=\langle l_{B'}, G^{B'}\rangle=\langle l_{B}, G^B\rangle \le G_i$.  If  $k \in G_B$, then $k \cdot B' \subseteq B$. It follows that $B'$ and $k\cdot B'$ were merged when obtaining $B$ using process (1), and hence that $k \in G'_{B'}$.
But as $G_{B'} \le G_i$, we have that   $k \in G_i$, as required.

Let $(P,I)=(\{X\},\varnothing)$ so that $D_{P,I}=\varnothing$. If $|\{x\in X: G_x=G\}|\neq 1$, then for any $x \in X$, there exists an $x' \in X$, $x \neq x'$ with $G_x=G_{x'}$. Hence, we may construct an $h \in \End_G(X)$ that does not fix any elements of $X$. Taking $g$ as the identity, we obtain $D_{g,h}=\varnothing=D_{P,I}$.

Conversely, let $x'$ be the unique element in  $\{x\in X: G_x=G\}$. Then every $G$-endomorphism of $X$ fixes $x'$. It follows that for any $g,h \in \End_G(X)$, the constant endomorphism with image $x'$ will be in $D_{g,h}$, and so $D_{g,h} \neq D_{P,I}$.
\end{proof}

We call a standard pair $(P,I)$ \emph{valid} if it satisfies the conditions of Lemma \ref{l:spairs} that guarantee the existence of $g,h \in \End_G(X)$ with $D_{P,I}=D_{g,h}$. The following theorem summarizes our results.

\begin{theorem}\label{t:taudomains}
The sets of the form $D_{g,h}$, with $g,h \in \End_G(X)$ are in bijective correspondence with the valid standard pairs of the $G$-set $X$.
\end{theorem}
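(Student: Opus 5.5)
The plan is to define the correspondence explicitly and check that it is well defined, injective and surjective. To a set of the form $D_{g,h}$ we assign a standard pair as follows. By Theorem~\ref{t:tauI-sets} there is a pair $(P,I)$ satisfying \textup{(1)}--\textup{(5)} with $D_{g,h}=D_{P,I}$; we can take $I$ to be the fixed points of $gh$ and $P$ the connected components of its function graph. We then set $\Psi(D_{g,h})=\overline{(P,I)}$. If $I$ is not accessible from $P$, this is $(\{X\},\varnothing)$, and otherwise it is the pair produced by Lemma~\ref{l:Pmerge}.

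\textbf{Well-definedness.} We have to show that $\overline{(P,I)}$ depends only on the set $D_{g,h}$ and not on the chosen pair. We split by whether $D_{g,h}$ is empty.
\begin{enumerate}[(a)]
\item If $D_{g,h}=\varnothing$, the last clause of Theorem~\ref{t:tauI-sets} shows that $I$ is non-accessible for every representing pair, so the image is always $(\{X\},\varnothing)$.
\item If $D_{g,h}\neq\varnothing$, the same clause says $I$ is uniquely determined. Lemma~\ref{l:Pmerge} then shows that $P'$ is the coarsest $G$-invariant partition $P''$ with $D_{P'',I}=D_{P,I}$. This characterization is intrinsic to the set $D_{P,I}$ together with $I$. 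Hence any two admissible $P_1,P_2$ with $D_{P_1,I}=D_{P_2,I}$ yield the same $P'$.
\end{enumerate}
The resulting pair is standard, since $\overline{\overline{(P,I)}}=\overline{(P,I)}$ and Lemma~\ref{l:Pmerge} supplies conditions \textup{(1)}--\textup{(3)}, non-nullness and $G$-invariance. It is valid by the definition of validity, because $D_{P',I}=D_{P,I}=D_{g,h}$ (Lemma~\ref{l:spairs} characterizes exactly this).

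\textbf{Injectivity.} Suppose two sets $D_{g,h}$ and $D_{g',h'}$ have the same standard pair $(P',I)$. If the pair is not $(\{X\},\varnothing)$, then both sets equal $D_{P',I}$ by Lemma~\ref{l:Pmerge}, so they coincide. If the pair is $(\{X\},\varnothing)$, I need to be careful, and I expect this to be the main obstacle. A nonempty $D_{g,h}$ has accessible $I$, and so maps to a pair with $I\neq\varnothing$. The exception is when $I=\varnothing$ is accessible, but accessibility of $\varnothing$ forces $P=\varnothing$, which is impossible for nonempty $X$. So only empty $D_{g,h}$ map to $(\{X\},\varnothing)$, and the correspondence is injective.

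\textbf{Surjectivity.} Take a valid standard pair. By definition, via Lemma~\ref{l:spairs}, there are $g,h$ with $D_{g,h}=D_{P,I}$. For the nontrivial case we must check that $\Psi(D_{g,h})=(P,I)$. By well-definedness we may compute $\Psi$ from any admissible representative. The proof of Lemma~\ref{l:spairs} gives a representative $(P^*,I)$ that refines $P$, satisfies \textup{(1)}--\textup{(4)} and has $D_{P^*,I}=D_{P,I}$. Its closure is the coarsest partition with the same $D$. Because $(P,I)$ is standard, $\overline{(P,I)}=(P,I)$, so $P$ is already that coarsest partition. Hence the closure equals $(P,I)$.

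For the pair $(\{X\},\varnothing)$, validity means $|\{x:G_x=G\}|\neq 1$. Lemma~\ref{l:spairs} then gives $g,h$ with $D_{g,h}=\varnothing$, which maps to $(\{X\},\varnothing)$. This completes the bijection.
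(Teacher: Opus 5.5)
Your proposal is correct and follows the paper's approach. The paper gives no separate proof: it presents the theorem as a summary of Theorem~\ref{t:tauI-sets} (representing pairs, uniqueness of $I$, and emptiness exactly when $I$ is not accessible), Lemma~\ref{l:Pmerge} (the merged partition is the coarsest one with the same $D$) and Lemma~\ref{l:spairs} (validity), and you assemble the bijection $D_{g,h}\mapsto\overline{(P,I)}$ from exactly these pieces. The one check you leave implicit is that $I$ is accessible from the merged partition $P'$, which is needed for standardness; it holds because any $t\in D_{P',I}=D_{g,h}\neq\varnothing$ sends each block $B\in P'$ to a single point $i\in I$, and then $G_B\le G_i$.
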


We will now examine the partial automorphisms of $\End_G(X)$ of the form $\phi_{g,h}$.

\begin{lemma}
Let $g,h\in \End_G(X)$, with  $D_{g,h}= D_{P,I}$ and $D_{h,g}=D_{P',I'}$. Then $I$ is accessible from $P$ if and only if $I'$ is accessible from $P'$.
\end{lemma}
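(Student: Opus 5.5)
The proof will go through the criterion at the end of Theorem~\ref{t:tauI-sets}. There, for $(P,I)$ chosen as in that theorem (fixed points and connected components of $gh$), $I$ is accessible from $P$ if and only if $D_{g,h}\neq\varnothing$. Applying the same theorem to the pair $(h,g)$, where $P',I'$ come from $hg$, gives that $I'$ is accessible from $P'$ if and only if $D_{h,g}\neq\varnothing$.

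One subtlety needs care first. The lemma only assumes $D_{g,h}=D_{P,I}$ for some pair $(P,I)$, not necessarily the canonical one. But accessibility of $I$ from $P$ is equivalent to $D_{P,I}\neq\varnothing$ for any $G$-invariant $P$ and union of orbits $I$. Indeed, if some block $B$ has no $i\in I$ with $G_B\le G_i$, then no $G$-endomorphism can send $B$ to a single point of $I$, since $t(B)=\{i\}$ forces $G_B\le G_i$. Conversely, if $I$ is accessible, the representative construction in the proof of Theorem~\ref{t:tauI-sets} produces an element of $D_{P,I}$; that construction uses only accessibility together with the stabilizer conditions. I will state this equivalence explicitly, so the lemma reduces to showing that $D_{g,h}\neq\varnothing$ if and only if $D_{h,g}\neq\varnothing$.

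This reduction is immediate from Theorem~\ref{Thm:phi_iso}. That theorem holds in any semigroup, so in particular in $\End_G(X)$, and it says $\phi_{g,h}$ is a bijection from $D_{g,h}$ onto $D_{h,g}$. Hence one set is empty exactly when the other is. Concretely, if $t\in D_{g,h}$ then $htg\in D_{h,g}$, and symmetrically.

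The only real obstacle is the subtlety above, namely not silently assuming $(P,I)$ is the canonical pair. I expect it to be routine to verify that the accessibility characterization of nonemptiness does not depend on that choice.
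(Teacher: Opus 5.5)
Your proposal is correct and follows the paper's proof. The bijection $\phi_{g,h}\colon D_{g,h}\to D_{h,g}$ from Theorem~\ref{Thm:phi_iso} shows that $D_{P,I}$ and $D_{P',I'}$ are empty simultaneously, and the accessibility criterion at the end of Theorem~\ref{t:tauI-sets} then gives the equivalence. Your extra check, that accessibility is equivalent to $D_{P,I}\neq\varnothing$ for any $G$-invariant $P$ and union of orbits $I$ and not only for the canonical pair coming from $gh$, is valid, and it makes explicit a point the paper passes over.
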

\begin{proof}
As $\phi_{g,h}$ is an automorphism from $D_{g,h}$ to $D_{h,g}$,  we obtain that  $D_{P,I}= \varnothing$ if and only if $D_{P',I'}= \varnothing$. The result now follows with Theorem \ref{t:tauI-sets}.
\end{proof}

\begin{lemma}\label{l:taualpha}
Let $g,h\in \End_G(X)$, with  $D_{P,I}=D_{g,h} \neq \varnothing$ and $D_{P',I'}=D_{h,g} \neq \varnothing$.
Then $g|_{I'}: I'\to I$, $h|_{I}:I\to I'$ are inverse $G$-isomorphisms. In particular, $I$ and $I'$ contain the same number of $G$-orbits of $X$ for each point stabilizer.

Conversely, let $I$ and $I'$ be unions of $G$-orbits such that  they contain the same number of $G$-orbits of $X$ for each point stabilizer. Then there exist a $G$-isomorphism $h'$ from $I$ to $I'$. Moreover,  if $\{G_i: i \in I\}$ contains every maximal stabilizer of $\{G_x:x \in X\}$, then any such $h'$ and its inverse $g'$ can be extended to
 $g,h \in \End_G(X)$ such that   $D_{g,h}= D_{P,I}$ and $D_{h,g}=D_{P',I'}$ for some $P, P'$ that can access $I,I'$, respectively.
\end{lemma}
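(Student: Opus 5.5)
The plan is to follow Lemma~\ref{l:alpha}, working directly with fixed points rather than test endomorphisms. Throughout, composition is right-to-left. By the proof of Theorem~\ref{t:tauI-sets}, $I$ is the set of fixed points of $p=g\circ h$. The uniqueness part of that theorem (available because $D_{g,h}\neq\varnothing$) shows that the $I$ in $D_{g,h}=D_{P,I}$ is exactly this fixed point set. Likewise, $I'$ is the set of fixed points of $h\circ g$.

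\emph{Direct part.} Let $i\in I$. Then $(h\circ g)(h(i))=h((g\circ h)(i))=h(i)$, so $h(i)$ is fixed by $h\circ g$, and hence $h(i)\in I'$. Symmetrically, $g(I')\subseteq I$. Since $g\circ h$ is the identity on $I$ and $h\circ g$ is the identity on $I'$, the maps $h|_I$ and $g|_{I'}$ are mutually inverse bijections. They are $G$-maps because $g,h\in\End_G(X)$, so they are $G$-isomorphisms. A $G$-isomorphism maps orbits bijectively onto orbits and preserves point stabilizers, since $G_x\le G_{h(x)}$ and $G_{h(x)}\le G_{g(h(x))}=G_x$. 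Therefore $I$ and $I'$ contain the same number of orbits with each stabilizer.

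\emph{Converse, construction of $h'$.} Two orbits with the same stabilizer $K$ are both $G$-isomorphic to $G/K$ (recall $G$ is abelian, so stabilizers are constant on orbits). Fix a bijection between the orbits of $I$ and those of $I'$ that preserves stabilizers, and glue the corresponding orbit isomorphisms. This gives a $G$-isomorphism $h'\colon I\to I'$ with inverse $g'$.

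\emph{Converse, extension to $g,h$.} This is the step needing care: we must make the fixed point sets of $g\circ h$ and $h\circ g$ equal to exactly $I$ and $I'$. The plan is to force $h(X)\subseteq I'$ and $g(X)\subseteq I$.
\begin{itemize}
\item For each orbit representative $x_j$ outside $I$, choose a maximal stabilizer $M\ge G_{x_j}$ among $\{G_x:x\in X\}$. By hypothesis some orbit of $I$ has stabilizer $M$, and by the equal counts so does some orbit of $I'$.
\item Set $h(x_j)$ to be a point of that orbit of $I'$, and extend by $h(l\cdot x_j)=l\cdot h(x_j)$. This is well defined since $G_{x_j}\le M$ (the construction described at the start of this section).
\item Put $h=h'$ on $I$, and define $g$ symmetrically, with $g=g'$ on $I'$ and $g(X)\subseteq I$.
\end{itemize}

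\emph{Converse, verification.} The map $g\circ h$ sends $X$ into $g(I')=I$ and is the identity on $I$, so its fixed point set is exactly $I$. Symmetrically, $h\circ g$ has fixed point set exactly $I'$. Moreover $g\circ h$ is idempotent, so $g\circ h\in D_{g,h}$, and in particular $D_{g,h}\neq\varnothing$.

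Applying the first part of Theorem~\ref{t:tauI-sets}, $D_{g,h}=D_{P,I}$ with $P$ the partition into connected components of the function graph of $g\circ h$, and similarly $D_{h,g}=D_{P',I'}$. Since both sets are nonempty, the final assertion of Theorem~\ref{t:tauI-sets} shows that $I$ is accessible from $P$ and $I'$ is accessible from $P'$.

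The only point I expect to need checking is that the chosen images of orbit representatives respect stabilizers. This is exactly where the maximal-stabilizer hypothesis is used: it guarantees a target orbit with stabilizer containing $G_{x_j}$.
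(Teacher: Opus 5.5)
Your proof is correct and essentially follows the paper's route. The converse is the same construction: you send representatives of orbits outside $I$ to points of $I'$ with larger stabilizer (these exist by the maximal-stabilizer hypothesis, transferred to $I'$ via the equal counts), so that $g\circ h$ and $h\circ g$ fix exactly $I$ and $I'$. Your accessibility argument via $g\circ h\in D_{g,h}$ is equivalent to the paper's observation that every component of $g\circ h$ meets $I$ in exactly one point. The only difference is in the direct part: the paper appeals to Lemma~\ref{l:alpha} with $g,h$ swapped, whereas you argue directly from $h\circ(g\circ h)=(h\circ g)\circ h$ on fixed points, which is shorter and self-contained.
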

\begin{proof}
Assume the conditions of the first part of the lemma.
By the proof of Theorem  \ref{t:tauI-sets}, $I$ and $I'$ are obtained in $\End_G(X)$ as in $T(X)$. Hence by Lemma \ref{l:alpha} $g|_{I'}$ and $h|_I$ are inverse bijections (note that the roles of $g$ and $h$ are switched in this lemma, as we reversed the order of composition in this section). As $g$ and $h$ are $G$-endomorphisms, the corresponding restrictions are $G$-isomorphisms. The result about the $G$-orbits of $I$ and $I'$ now
 follows from the $G$-isomorphism $h|_I$.

For the converse part, it is clear the $h'$ exists.  Pick a set of representatives $\{x_j\}$ for the $G$-orbits of $X$ not intersecting $I$, and extend $h'$ to $h$ by assigning $h(x_j)$ to any element $i'$ in $I'$ with $G_{x_j} \le G_{i'}$. Dually, extent $g'$ to a $G$-endomorphism. Then $gh$ and $hg$ fix exactly the elements of $I$ and $I'$, respectively. The construction from Theorem \ref{t:tauI-sets} now shows that $D_{g,h}=D_{P,I}$,
where  $P$ is constructed as in the theorem. Due to our choices above, this $P$ has the additional property that every $B \in P$ intersects $I$ in exactly one point, implying that $I$ is accessible from $P$. The results about $D_{h,g}$ follows dually.
\end{proof}

We remark that if $D_{g,h}=D_{h,g} = \varnothing$, and  if we represent $D_{g,h}$ and $D_{h,g}$ by the standard pair $(\{X\}, \varnothing)$,  then this results holds trivially as well.

\begin{lemma}\label{l:taubeta}
Let $g,h\in \End_G(X)$, with  $D_{g,h}= D_{P,I}$ and $D_{h,g}=D_{P',I'}$. Assume that $(P,I)$ are $(P',I')$ are standard pairs.

Then $\hat{h}: P \to P'$, given by $\hat{h}([x]_P)  = [hx]_{P'}$, and $\hat{g}: P'\to P$, given by
$ \hat{g}([x']_{P'}) = [gx']_{P}$, are well-defined inverse bijections that are compatible with the action of $G$ on $P$ and $P'$.
If $\hat{h}B=B'$ for some $B \in P$, then $G^B=G^{B'}$ and $G_B=G_{B'}$. A dual statement holds for $\hat{g}$.

In addition, for all $B \in P$, $B' \in P'$, we get $B\cap I = \varnothing \iff\hat{h} B\cap I'= \varnothing$
and $B' \cap I' = \varnothing \iff \hat{g} B' \cap I= \varnothing$.

Conversely, suppose $P$ and $P'$ are non-null $G$-invariant partitions, $\{B_i\}$ and $\{B_i'\}$ are representatives of the orbits of $P$ and $P'$ of the same size, indexed such that $G_{B_i}=G_{B'_i}$ and $G^{B_i}=G_{B'_i}$ for all $i$. Then there exist  $g,h\in \End_G(X)$ such that $\hat{h}$ and $\hat{g}$ are inverse bijections between $P$ and $P'$ satisfying $\hat{h}B_i=B'_i$.
\end{lemma}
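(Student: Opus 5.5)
The plan is to transfer the argument of Lemma~\ref{l:beta} to the $G$-equivariant setting, with composition now written right to left, so that the roles of $g$ and $h$ are swapped. Here $D_{g,h}=\{t: t\circ g\circ h=t=g\circ h\circ t\}$, and $\phi_{g,h}(t)=h\circ t\circ g$ maps $D_{g,h}$ onto $D_{h,g}$. The nonempty case is the main one. If $D_{g,h}=\varnothing$, the standard pairs are $(\{X\},\varnothing)$ on both sides, and $\hat h,\hat g$ are the trivial maps on one-block partitions, so that case is immediate.

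For the first part, I would follow Lemma~\ref{l:beta} step by step, but with the test transformations replaced by $G$-endomorphisms.
\begin{enumerate}
\item To show $g\circ h$ maps each $B\in P$ into itself: since $(P,I)$ is standard, Lemma~\ref{l:Pmerge} says $P$ is the coarsest partition giving $D_{P,I}$. So for $x\notin[ghx]_P$ there is some $t\in D_{P,I}$ separating $[x]_P$ from $[ghx]_P$. On the other hand $t=t\circ g\circ h$ gives $t(x)=t(ghx)$, a contradiction.
\item To show $\hat h$ is well defined: if $x,y\in B$ but $hx,hy$ lie in different $P'$-blocks, coarseness of $P'$ gives $t'\in D_{P',I'}$ with $t'(hx)\neq t'(hy)$. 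Then $g\circ t'\circ h\in D_{g,h}$ forces $g t'(hx)=g t'(hy)$. This contradicts injectivity of $g|_{I'}$ from Lemma~\ref{l:taualpha}.
\item Dually $\hat g$ is well defined, and step 1 gives $\hat g\hat h=\id$; symmetrically $\hat h\hat g=\id$.
\item Compatibility with the $G$-action is immediate from $h(k\cdot x)=k\cdot hx$.
\end{enumerate}

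Next come the stabilizers. For $G_B$: since $\hat h$ is a $G$-equivariant bijection, $k\cdot B=B$ if and only if $k\cdot\hat hB=\hat hB$, so $G_B=G_{\hat hB}$. For $G^B$: endomorphisms only enlarge point stabilizers, so $G_{hx}\ge G_x$ and $G_{gx'}\ge G_{x'}$. Take $x_B$ with $G_{x_B}=G^B$. Then $G^{B'}\ge G_{hx_B}\ge G^B$, and symmetrically $G^B\ge G^{B'}$, where I use that $G^{B'}$ is the maximum point stabilizer in $B'$ (non-null). This gives equality. The statement about blocks meeting $I$ follows from Lemma~\ref{l:taualpha}, since $h$ maps $I$ onto $I'$ bijectively and $\hat h$ is induced by $h$.

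For the converse, I would build $h$ on orbit representatives using the construction described at the start of the section. For each orbit representative $x_j\in B_i$, set $h(x_j)$ to be a fixed point $y_i\in B'_i$ with $G_{y_i}=G^{B'_i}$. This is allowed because $G_{x_j}\le G^{B_i}=G^{B'_i}$. Since $G_{B_i}\le G_{B'_i}$, the resulting $G$-endomorphism maps $B_i$ into $B'_i$, and by equivariance it sends $k\cdot B_i$ into $k\cdot B'_i$. Defining $g$ symmetrically, $\hat h$ and $\hat g$ are the induced maps $k\cdot B_i\mapsto k\cdot B'_i$ and back. These are well defined and inverse to each other because the orbit of $B_i$ and the orbit of $B'_i$ are both isomorphic to $G/G_{B_i}$, the stabilizers being equal. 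In this converse reading, $\hat h,\hat g$ are defined directly relative to $P,P'$, not as the partitions of $D_{g,h}$.

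I expect the main obstacle to be step 1, together with the analogous separation used in step 2. Separating two blocks by a $G$-endomorphism into $I$ needs care when both blocks are forced to map to a sink, or when they are $G$-translates whose translating element lies in every relevant $G_i$. These are exactly the pairs merged in Lemma~\ref{l:Pmerge}, and that is why standardness is assumed.
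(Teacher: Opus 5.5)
Your proof is correct, but it is organized differently from the paper's. The paper does not work on the standard partitions directly. It first applies Lemma~\ref{l:beta} (in $T(X)$) to get inverse bijections $\bar h,\bar g$ between the partitions $\bar P,\bar P'$ into connected components of the function graphs of $g\circ h$ and $h\circ g$. It reads off $G$-compatibility and $G_B=G_{\bar hB}$, $G^B=G^{\bar hB}$ at that level. It then checks that $\bar h,\bar g$ respect both merging rules (a) and (b) of Lemma~\ref{l:Pmerge}, so they descend to $P=\overline{(\bar P,I)}$ and $P'=\overline{(\bar P',I')}$.

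You instead rerun the argument of Lemma~\ref{l:beta} directly on $P,P'$, using $G$-endomorphisms as test maps. In place of the freedom to pick arbitrary transformations in $T(X)$, you use the fact that distinct blocks of a standard partition are separated by some element of $D_{P,I}$.

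What your route buys:
\begin{itemize}
\item It is shorter and more uniform, avoiding the detour through $\bar P$.
\item It avoids transporting $G'_B$ and sinks under $\bar h$, which the paper treats only briefly.
\item It needs no special case $|I|=1$, which the paper must set aside because Lemma~\ref{l:beta} assumes $|I|\ge 2$.
\item Your derivation of $G^B=G^{\hat hB}$ from maximality of $G^{B'}$ is more explicit than the paper's.
\end{itemize}

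The price is that all the $G$-specific difficulty sits in the separation property, so state precisely where it comes from. Lemma~\ref{l:Pmerge} as stated only says $P$ is coarsest among $G$-invariant partitions with the same $D_{P,I}$. To get a separating $t$, you can cite the explicit constructions in the converse part of its proof, which separate every non-$\sim_P$ pair. Alternatively, argue as follows: if $B_1\neq B_2$ were never separated, then by equivariance of every $t\in D_{P,I}$, merging $kB_1$ with $kB_2$ for all $k\in G$ would give a strictly coarser $G$-invariant partition with the same $D_{P,I}$, contradicting coarseness.

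The paper's route, in turn, records that $\hat h$ is induced by the finer bijection between the component partitions of $g\circ h$ and $h\circ g$. Your construction for the converse matches the paper's.
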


\begin{proof}
If $D_{g,h}=\varnothing$, or $|I|=|I'|=1$, then $P=P'=\{X\}$, and the first part of the statement holds trivially. So assume that this is not the case.

Then we may apply Lemma \ref{l:beta} to $g,h$, which shows that $h,g$ induce inverse bijections $\bar{h}$ and $\bar{g}$ between $\bar{P}$ and $\bar{P}'$, the partitions corresponding to the connected components of the function graphs of $h,g$ (we recall that the roles of $g,h$ are exchanged compared to the lemma, as we are using right-to-left composition in this section). Moreover, the lemma shows that $\hat{h}$  maps blocks  intersecting $I$ to blocks intersecting $I'$, and vice versa for $\hat{g}$.
 As $g,h \in \End_G(X)$, we also obtain that these bijections are compatible with the action of $G$, and moreover that $G_B=G_{\bar{h}B}$ and $G^B=G^{\bar{h}B}$ for all $B \in \bar{P}$ (and dually for $\bar{g}$).

We claim that $\bar{h}$, and $\bar{g}$ in turn induce inverse bijections  from  $P$ to $P'$, using hat $\overline{(\bar{P},I)}=(P,I)$ and $\overline{(\bar{P}',I)}=(P',I)$. Suppose first that $B, l \cdot B \in \bar{P}$ are such that $l \in G'_B$, as calculated in the pair $(\bar{P},I)$. Because $\bar{h}$ is compatible with $G$, and $h$ maps $I$ to $I'$ by Lemma \ref{l:taualpha}, it follows that $G'_B=G'_{\hat{h}B}$, where the latter is calculated according to the pair $(\bar{P}',I')$. Hence $\bar{h}B$ and $l \cdot \bar{h}B$ lie in the same block of $P$. A dual statement holds for the action of $\bar{g}$. Hence if $B$ and $l\cdot B$ are merged according to part (1)\ of Lemma \ref{l:Pmerge} if and only if the same holds for their images under $\bar{h}$. It is straightforward to check that all other claimed properties hold as well.

By a very similar argument, we can show that $B,B' \in \bar{P}$ satify the conditions of part (2) of Lemma \ref{l:Pmerge} (with respect to $(\bar{P},I)$) if and only if $\bar{h}B$ and $\bar{h}B'$ do so  (with respect to $(\bar{P'},I')$). It follows that $\bar{h}$ and $\bar{g}$ induce bijections $\hat{h}$ and $\hat{g}$, as required.

For the converse part, let the $B_i, B'_i$ be as stated. For each $i$, choose representatives $x_{i,j}$ of the $G$-orbits intersecting $B_i$. As mentioned before, the union of the $x_{i,j}$ is a set of representatives for all $G$-orbits on $X$. For each $B'_i$ pick an $x'_i \in G^{B'_i}$, set $h(x_{i,j})=x'_i$, and extend to a $G$-endomorphism. We have that $h$ is well-defined, as $G_{x_{i,j}}\le G^{B_i}=G^{B'_i}=G_{x'_i}$. Moreover, as $G_{B_i}=G_{B'_i}$ the induced function $\hat{h}$ on $P$ is injective. Defining $g$ dually, we obtain $\hat{h}$ and $\hat{g}$ with the claimed properties.
\end{proof}

We now can derive a classification theorem for the generating elements  $\phi_{g,h}$ of the partial inner automorphism monoid.

\begin{theorem}\label{t:taugenerators}
The partial inner automorphisms of $\End_G(X)$ having the form $\phi_{g,h}$ are in bijective correspondence with the
tuples $(P,  P',I, I', \alpha, \beta)$, where
\begin{itemize}
\item $(P,I)$ and $(P',I')$ are valid standard pairs;
\item $\alpha$ is a bijection from $I$ to $I'$ that is compatible with the action of $G$;
\item $\beta$ is a bijection from $P$ to $P'$ that is compatible with the action of $G$, extends the function induced by $\alpha$ on $P$, and satisfies
$G^{\beta(B)}=G^B$
\end{itemize}
such that
\begin{itemize}
\item the domain of $\phi_{g,h}$ consists of all endomorphisms $t \in \End_G(X)$ with $\ima t \subseteq I$, $P \subseteq \ker t$;
\item the image of $\phi_{g,h}$ consists of all endomorphisms $t \in \End_G(X)$ with $\ima t \subseteq I'$, $P' \subseteq \ker t$;
\item given $t$ in the domain of $\phi_{g,h}$, and $x \in X$, we have
$\phi_{g,h}(t)(x)= \alpha i$, where $i\in I$ is the unique element in $t(\beta^{-1}([x]_{P'}))$.
\end{itemize}
For given valid standard pairs  $(P,I)$ and $(P',I')$, $\alpha$ and $\beta$ as above exist if and only if:
\begin{enumerate}
\item for each subgroup $H \le G$, the number of orbits in $I$ and $I'$ with point stabilizer $H$ are equal;
\item associating to each $G$-orbit $[B]$ of $P$ and $P'$ the pair $(G_B, G^B)$, the number of orbits corresponding to each such pair in $P$ and $P'$ are equal.
\end{enumerate}
\end{theorem}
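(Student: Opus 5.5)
The plan is to mirror the proof of Theorem~\ref{t:generators}, now using the $G$-set versions of the ingredients. Given $g,h\in\End_G(X)$, Theorem~\ref{t:taudomains} yields unique valid standard pairs $(P,I)$ and $(P',I')$ with $D_{g,h}=D_{P,I}$ and $D_{h,g}=D_{P',I'}$. I would then set $\alpha=h|_I$ and $\beta=\hat h$. Lemma~\ref{l:taualpha} shows that $\alpha$ is a $G$-isomorphism $I\to I'$ with inverse $g|_{I'}$. Lemma~\ref{l:taubeta} shows that $\beta$ is a $G$-compatible bijection $P\to P'$ with $G^{\beta B}=G^B$ and $G_{\beta B}=G_B$; since $h(i)\in[h i]_{P'}$, it extends the partial map on $P$ induced by $\alpha$.

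For the action formula, take $t\in D_{P,I}$ and $x\in X$. We have $\phi_{g,h}(t)=hta$ with $a=g$ in right-to-left notation (equivalently, the image of $t$ under conjugation). Since $\hat g=\beta^{-1}$, the point $g x$ lies in $\beta^{-1}([x]_{P'})$, and $t$ is constant on that block with value some $i\in I$. Hence $\phi_{g,h}(t)(x)=h(i)=\alpha i$.

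Injectivity of the correspondence works as in Theorem~\ref{t:generators}. The domain and image determine $(P,I)$ and $(P',I')$ by the uniqueness statements in Theorem~\ref{t:tauI-sets} and Lemma~\ref{l:Pmerge}, since these are standard pairs. To recover $\alpha$, I would use endomorphisms $t_i\in D_{P,I}$ with $i$ in the image, built as in the last paragraph of the proof of Theorem~\ref{t:tauI-sets}, and then read off $\phi_{g,h}(t_i)$. To recover $\beta$, I would separate blocks with endomorphisms that take different values in $I$ on $B$ and on a chosen other block. Such separating endomorphisms exist exactly because, in a standard pair, blocks not merged by $\sim_P$ can be separated by some $t\in D_{P,I}$; this is the converse part of the proof of Lemma~\ref{l:Pmerge}. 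The degenerate case of the pair $(\{X\},\varnothing)$, where the domain is empty, is handled separately, with trivial $\alpha$ and $\beta$.

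Surjectivity is the part I expect to be the main obstacle. Given a tuple, one must produce $g,h$ such that $gh$ realises $(P,I)$, $hg$ realises $(P',I')$, and $\hat h=\beta$ and $h|_I=\alpha$ hold simultaneously. I would first refine $P$ and $P'$ into the connected-component partitions guaranteed by validity, as in the proof of Lemma~\ref{l:spairs}. Because $\beta$ preserves $(G_B,G^B)$ and is $G$-compatible, it can be transported to a matching of these refinements. Next I would define $h$ on orbit representatives: on $I$ by $\alpha$, and on each block by sending representatives to the corresponding refined block with the same cyclic structure. I would define $g$ dually using $\alpha^{-1}$ and $\beta^{-1}$. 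This combines the constructions in the converse parts of Lemmas~\ref{l:taualpha} and \ref{l:taubeta}. The remaining work is to check that $gh$ has fixed point set $I$ and component partition equal to the refinement of $P$.

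The existence criterion follows from these facts. For necessity, $\alpha$ is a $G$-isomorphism, so it preserves the number of orbits with each stabilizer, which gives condition~1. Also $\beta$ is $G$-compatible and preserves $(G_B,G^B)$, so it matches $G$-orbits of $P$ and $P'$ type by type, which gives condition~2. For sufficiency, I would choose $\alpha$ orbitwise using condition~1. I would then choose $\beta$ by matching orbit representatives of equal type using condition~2, first forcing blocks meeting $I$ to go to $[\alpha i]_{P'}$, and extend $G$-equivariantly. The step needing care is consistency with $\alpha$ on blocks meeting $I$: such blocks satisfy $G^B=G_B=G_i$, so the type counts of the remaining blocks still agree.
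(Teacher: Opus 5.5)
Your forward direction, the action formula, the recovery of $(P,I)$ and $(P',I')$, and the existence criterion all go through as you describe, essentially as in the paper.

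\textbf{The gap: recovering $\alpha$.} The step that fails is the recovery of $\alpha$ in the injectivity part. From $\phi_{g,h}$ you only get the values $\phi_{g,h}(t_i)(x)=\alpha\bigl(t_i(\beta^{-1}([x]_{P'}))\bigr)$. To read off $\alpha(i)$ you must know for which $x$ the block $\beta^{-1}([x]_{P'})$ is sent to $i$ by $t_i$, i.e.\ you must already know $\beta$. But your recovery of $\beta$ takes $\alpha$ as given. In $T(X)$ this circularity was broken by the constant maps $c_i$, which lie in $\End_G(X)$ only when $G_i=G$.

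Here it cannot be broken. For every $z\in G$, the pair $\alpha_z(i)=z\cdot\alpha(i)$, $\beta_z(B)=z\cdot\beta(B)$ again satisfies all the listed requirements: it is $G$-compatible, $\beta_z([i]_P)=[\alpha_z(i)]_{P'}$, and $G^{z\cdot\beta(B)}=G^{\beta(B)}=G^B$. Because $\phi_{g,h}(t)$ is $G$-equivariant,
\[
\alpha_z\bigl(t(\beta_z^{-1}([x]_{P'}))\bigr)=z\cdot\phi_{g,h}(t)(z^{-1}\cdot x)=\phi_{g,h}(t)(x).
\]

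\textbf{A counterexample.} Let $G=\{e,s\}\cong\Z_2$ act regularly on $X=\{a,b\}$. Then $\End_G(X)=\{\id,\sigma\}$ with $\sigma$ the swap. It is commutative, so $\phi_{\id,\id}=\phi_{\sigma,\sigma}=\id$ and $\phi_{\id,\sigma}=\phi_{\sigma,\id}=\varnothing$. The valid standard pairs are $(\{\{a\},\{b\}\},X)$ and $(\{X\},\varnothing)$. Both $(\alpha,\beta)=(\id,\id)$ and $(\alpha,\beta)=(\sigma,\{a\}\leftrightarrow\{b\})$ satisfy every condition for the identity map. These are exactly what $\alpha=h|_I$, $\beta=\hat h$ produce from $(g,h)=(\id,\id)$ and $(\sigma,\sigma)$.

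So there are three admissible tuples but only two maps $\phi_{g,h}$. Injectivity as stated is false. The correspondence can only become bijective after passing to equivalence classes of tuples that contain at least these translations. In general the classes are larger still. For $X=G/H_1\sqcup G/H_2$ with $H_1,H_2$ incomparable and $H_1H_2\neq G$, $\End_G(X)$ is commutative. Translating the two orbits by $z_1,z_2$ with $z_1^{-1}z_2\notin H_1H_2$ then also describes the identity, although it is not a translation by a single element of $G$.

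\textbf{Comparison with the paper.} The paper's proof makes exactly the same move (``for the uniqueness of $\alpha$, it suffices to show that for each $i\in I$ there exists $t_i\in D_{P,I}$ with $i\in\ima t_i$''). So the defect is in the statement, not just in your route. The uniqueness of $\beta$ for a fixed $\alpha$, via the separation property behind Lemma~\ref{l:Pmerge}, is fine in both.

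For surjectivity, your plan is better than the paper's. The paper simply takes the $g,h$ from the converse of Lemma~\ref{l:taubeta}. Those send each block onto the $G_B$-orbit of a single point. For trivial $G$ they make $gh$ fix a point in every block, including blocks disjoint from $I$.

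Building $g,h$ jointly on the refinement from Lemma~\ref{l:spairs} does work. The same witness serves for $B$ and $\beta(B)$, because $\alpha$ preserves the set of stabilizers occurring in $I$. But ``dually'' has to be made precise: put the twist in one factor only. For example:
\begin{itemize}
\item let $h$ send every representative in a refined block $C$ to a point $x'\in C'$ with $G_{x'}=G^{C'}$;
\item let $g(x')=l\cdot x$, where $x\in C$ with $G_x=G^C$;
\item let $g$ send the other representatives of $C'$ to $x$.
\end{itemize}
Then $gh$ and $hg$ act on the cycles by $y\mapsto l\cdot y$, which is fixed-point free exactly when $l\notin G^B$.

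You can always choose such an $l$ when $G_B\neq G^B$: if the validity witness lies in $G^B$, every element of $G_B$ is a witness. When $G_B=G^B$, condition (3) of Theorem~\ref{t:tauI-sets} guarantees at least two maximal orbits in $B$ and in $\beta(B)$, and you cycle through those instead.
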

\begin{proof}
Given $g,h \in\End_G(X)$, let $(P,I)$, $(P',I')$ be the  valid standard pairs with $D_{g,h}=D_{P,I}$, $D_{h,g}=D_{P',I'}$ (uniquely determined by Theorem \ref{t:taudomains}), and define $\alpha=h|_I$ and $\beta= \hat{h}$, as constructed in Lemma \ref{l:taubeta}.

That $\alpha$ and $\beta$ are bijections with the stated compatibility properties follows from Lemmas \ref{l:taualpha} and \ref{l:taubeta}.  Moreover if $i \in I$, then $\beta([i]_P)= \hat{h}([i]_p)=[h(i)]_{P'}=[\alpha(i)]_{P'}$, and so $\beta$ extends the function induced by $\alpha$.

The domain and image of $\phi_{g,h}$ have the stated form by Theorem \ref{t:taudomains}. Finally if $t \in \dom \phi_{g,h}$, and $x \in X$, let $i \in I$ be the unique element in $t(\beta^{-1}([x]_{P'}))$, then
\[
\phi_{g,h}(t)(x)=htg(x) \in htg([x]_{P'})= ht\hat{g}([x]_{P'})=ht\hat{h}^{-1}([x]_{P'})=ht\beta^{-1}([x]_{P'})=h(\{i\})=\{\alpha(i)\}\,,
\]
where we used that $\hat{g}=\hat{h}^{-1}$ by Lemma \ref{l:taubeta}.

The map $\phi_{g,h}$ uniquely determines the standard pairs $(P,I)$ and $(P',I')$  by Theorem \ref{t:taudomains}. We claim that this also holds for $\alpha$ and $\beta$.  For the uniqueness of $\alpha$, it suffices to show that for each $i \in I$, there exists $t_i \in D_{P,I}$ with $i \in \ima t_i$ (noting that $\ima \beta^{-1}=P$).
We may  construct such a $t_i \in D_{P,I}$ with $t_i(i)=i$ as in the last paragraph of the proof of Theorem \ref{t:tauI-sets}.

Suppose that $\beta, \beta':P\to P'$ are two bijections, that, together with some $\phi_{g,h},\alpha, P,I,P',I'$ satisfy the conditions of the theorem. We may assume that $\phi_{g,h}$ is not the empty mapping.

Let $C \in P'$, $B= \beta^{-1}(C), B'=\beta'^{-1}(C)$. For any $t \in D_{P,I}$, we obtain that
\[
\alpha(t(B))=\alpha(t(\beta^{-1}(C)))=\phi_{g,h}(C)=\alpha(t(\beta'^{-1}(C)))=\alpha(t(B'))\,.
\]
As $\alpha$ is bijective, and $t(B), t(B')$ are singletons, we obtain that $t(B)=t(B')$ for all $t \in D_{P,I}$.

Assume, by way of contradiction, that $B$ and $B'$ do not lie in the same $G$-orbit on $P$. Pick representative sets $B_i$, and representative elements $\{x_n\} \subseteq \bigcup B_i$, as before, with $B =B_1, B'=B_2$.
As $(P,I) \neq (\{X\}, \varnothing)$ is a standard pair, $I$ is accessible from $P$, and hence for each $B_i$ there exists $j_i \in I$ with $G_{B_i} \le G_{j_i}$. Hence we may set $t(x_j) =j_i $ for all $x_j \in B_i$ and extent to a $G$-endomorphism $t \in D_{P,I}$.
If for either $j_1$ or $j_2$ there is more than one possible choice, we may obtain a $t$ with $t(B) \neq t(B')$ by chosing $j_1 \neq j_2$. Therefore, by our assumption, there is only one choice for $j_1$ and $ j_2$. However this implies that  $j_1=j_2$ is a sink, and $G_B, G_{B'} \not\le G_j$ for $j \neq j_1$. As $(P,I)$ is a standard pair, it follows that $B=B'$, for a contradiction.

Hence  $B'= l\cdot B$ for some $l \in G$. Once again because $(P,I)\neq (\{X\},\varnothing)$ is a standard pair, there exists $i \in I$ such that $G_B \le G_i$.
Clearly, there also exsist a $t\in D_{P,I}$ with $t(B)=\{i\}$ (for example, we can obtain $t$ by a straigtforward adoptation of the construction from the previous paragraph). Hence $\{i\}=t(B)=t(B')=l \cdot t(B)=\{l \cdot i\}$, and so $l \in G_i$. As this  holds for all $i\in I$ with $G_B \le G_i$, it follows that $l \in G'_B$. As $(P,I)$ is a standard pair, we obtain $B=B'$. Hence $\beta^{-1}, \beta'^{-1}$ agree on each $C \in P'$, and so $\beta=\beta'$, as required. It follows that the correspondence between $\phi_{g,h}$ and the given tuples is injective.

Before addressing the surjectivity of this correspondence, we first prove the final statement about the existence of $\alpha $ and $\beta$. Assume that $(P,I)$ and $(P',I')$ are valid standard pairs. If $I$ and $I'$ do not have the same number of orbits for every point stabilizer, then clearly $\alpha$ does not exist.  The same holds $P$ and $P'$ do not satisfy the condition (2), where we note that $G_B=G_{\beta (B)}$ is necessary for $\beta $ to be invertible.

Assume instead that $P,P',I,I'$ satisfy conditions (1) and (2). By Lemma \ref{l:taualpha}, there exists a $G$-isomorphism $\alpha: I \to I'$. Because of (2), we may pick orbit representatives $B_i, B'_i$ of $P,P'$ such that $G_{B_i}=G_{B'_i}$ and $G^{B_i}=G^{B'_i}$. Moreover, we note that for the standard pair $(P,I)$, if $j\in I$ then $G_{[j]_P}=G^{[j]_P}=G_j$, and that a corresponding statement holds for $(P',I')$. Thus we may assume that if $B_i=[j]_P$ then $B'_i=[\alpha j]_{P'}$. Now the second part of Lemma \ref{l:taubeta} shows the existence of $\beta$ with the required properties. Hence $\alpha , \beta$ exist if and only if (1) and (2) hold.

In fact, Lemma \ref{l:taubeta} shows that $\beta$ has the form $\hat{h}$ for some $h \in \End_G(X)$, and that $\beta^{-1}=\hat{g} $ for some $g\in \End_G(X)$, where $\hat{h}([x]_P)=[h(x)]_{P'}$ and  $\hat{g}([x]_{P'})=[g(x)]_{P}$.
By direct calculation, we obtain that $\phi_{g,h}$ and $(P,I,P',I',\alpha, \beta)$ satisfy the conditions of the theorem, and therefore our correspondence is surjective. The result follows.
\end{proof}

We will now turn our attention to general elements of $\Inn(\End_G(X))$. For consistency with our other conventions in the section, we will also redefine the operation of $\Inn(\End_G(X))$ as right-to-left partial composition. We accordingly also adapt the following definition from the previous section.

\begin{defi}\label{d:bar}
Let $P,P'$ be partitions of $X$, and $\gamma: P \to P'$ a bijection. If $\bar P= \{B_i\}$ is a partition that refines to $P$, we define
$\bar{\gamma}$ on $\bar P$ by $ \bar{\gamma} (\cup B_i)= \cup (\gamma (B_i) )$.
\end{defi}

Clearly, $\bar{\gamma}$ is well-defined, and its image is a partition that refines to $P'$. Moreover, if $\gamma $ is compatible with $G$ and $P,P', \bar P$ are $G$-invariant, then the same properties hold for $\bar{\gamma}$ and $\bar{\gamma}(\bar{P})$.  The next theorem describes the general elements of $\Inn(\End_G(X))$.

\begin{theorem}\label{t:taugeneral}
Let $\phi \in \Inn(\End_G(X))$. Then there exist
\begin{itemize}
\item $G$-invariant partitions $P, P'$ of $X$;
\item $I, I' \subseteq X$, each of which are unions of $G$-orbits;
\item $G$-compatibe bijections $\alpha: I \to I'$, $\beta: P \to P'$ satisfying $\beta ([i]_P) = [\alpha i ]_{P'}$ for all $i \in I$
\end{itemize}
such that
\begin{itemize}
\item the domain of $\phi$ consists of all endomorphisms $t \in \End_G(X)$ with $\ima t \subseteq I$, $P \subseteq \ker t$;
\item the image of $\phi$ consists of all endomorphisms $t \in \End_G(X)$ with $\ima t \subseteq I'$, $P' \subseteq \ker t$;
\item given $t$ in the domain of $\phi$, and $x \in X$, we have
$\phi(t)(x)= \alpha i $, where $i\in I$ is the unique element in $t(\beta^{-1}([x]_{P'}))$.
\end{itemize}
Moreover, if $\phi_1,\phi_2 \in \Inn(\End_G(X))$ have corresponding parameters
\[
(P_1,I_1, P'_1, I_1', \alpha_1, \beta_1)\mbox{ and }(P_2, I_2, P'_2, I_2', \alpha_2, \beta_2)
\]
then $\phi_2\phi_1$ corresponds to
\[
(\bar \beta_1^{-1}(P'_1 \vee P_2) ,\alpha_1^{-1} (I'_1 \cap I_2) ,\bar \beta_2 (P'_1 \vee P_2) ,  \alpha_2(I'_1 \cap I_2), \alpha_2\alpha_1, \bar \beta_2\bar \beta_1)\,,
\]
where $\alpha_2\alpha_1$ refers to the partial composition $\alpha_2\alpha_1|_{\alpha_1^{-1}(I'_1 \cap I_2) }$, and $\bar\beta_1^{-1}, \bar\beta_2$ are defined with respect to the partition $\bar{P}=P'_1 \vee P_2$.
\end{theorem}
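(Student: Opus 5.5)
We argue by structural induction on the length of a product of generators and their inverses, in parallel with the proof of Theorem~\ref{t:general}. Since $\phi_{g,h}^{-1}=\phi_{h,g}$, it suffices to handle products of maps $\phi_{g,h}$ with $g,h\in\End_G(X)$.

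For the base case, Theorem~\ref{t:taugenerators} gives parameters $(P,P',I,I',\alpha,\beta)$ with all the required properties for any nonempty $\phi_{g,h}$. The $G$-invariance of $P,P'$ and the $G$-compatibility of $\alpha,\beta$ are part of that statement. The relation $\beta([i]_P)=[\alpha i]_{P'}$ is the statement that $\beta$ extends the map induced by $\alpha$. For the empty map we take $P=P'=\{X\}$, $I=I'=\varnothing$ and $\alpha,\beta$ empty and identity, respectively; the description is then vacuous.

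For the inductive step, let $\phi_1,\phi_2$ have the stated parameters and put $\bar P=P'_1\vee P_2$, which is $G$-invariant as a join of $G$-invariant partitions. The set $L=\ima\phi_1\cap\dom\phi_2$ consists of all $t\in\End_G(X)$ with $\ima t\subseteq I'_1\cap I_2$ and $\bar P\subseteq\ker t$, because $\ker t$ contains both $P'_1$ and $P_2$ exactly when it contains their join. The set $I'_1\cap I_2$ is a union of $G$-orbits.

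We then identify $\phi_1^{-1}(L)$ and $\phi_2(L)$ via the inductive formula for $\phi_1$.
\begin{itemize}
\item If $t\in\dom\phi_1$, then $\phi_1(t)$ is constant on $P'_1$-blocks with value $\alpha_1$ of the value of $t$ on $\beta_1^{-1}$ of that block.
\item Hence $\phi_1(t)$ has image in $I'_1\cap I_2$ exactly when $\ima t\subseteq \alpha_1^{-1}(I'_1\cap I_2)$.
\item Likewise $\phi_1(t)$ is constant on each $\bar P$-block exactly when $t$ is constant on each $\bar\beta_1^{-1}$-image of that block, that is, when $\bar\beta_1^{-1}(\bar P)\subseteq\ker t$.
\end{itemize}
This gives $\phi_1^{-1}(L)=D_{\bar\beta_1^{-1}(\bar P),\,\alpha_1^{-1}(I'_1\cap I_2)}$ inside $\End_G(X)$. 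The same argument applied to $\phi_2$ gives $\phi_2(L)=D_{\bar\beta_2(\bar P),\,\alpha_2(I'_1\cap I_2)}$. Here we use the remark after Definition~\ref{d:bar}: $\bar\beta_i^{\pm1}$ are $G$-compatible and send $G$-invariant partitions to $G$-invariant ones, so all resulting parameters are $G$-invariant. The map $\alpha_2\alpha_1$ restricted to $\alpha_1^{-1}(I'_1\cap I_2)$ is a composite of $G$-compatible partial bijections, hence a $G$-compatible bijection onto $\alpha_2(I'_1\cap I_2)$.

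The compatibility $\bar\beta_2\bar\beta_1([i])=[\alpha_2\alpha_1 i]$ and the pointwise formula for $\phi_2\phi_1(t)(x)$ follow by the same block-chasing computation as in the proof of Theorem~\ref{t:general}, translated to right-to-left notation. Pick $y$ with $\bar\beta_2^{-1}([x])=[y]_{\bar P}$, and use that $\ker t$ contains $\bar\beta_1^{-1}(\bar P)$ to reduce $t(\bar\beta_1^{-1}[y]_{\bar P})$ to $t(\beta_1^{-1}[y]_{P'_1})$. Then apply the inductive formula twice.

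The main obstacle is the domain computation: checking that the $T(X)$ description of the preimage survives restriction to $\End_G(X)$. Every step above is a statement about arbitrary transformations, intersected with $\End_G(X)$, so no existence of $G$-endomorphisms is needed. The description is therefore purely set-theoretic and does not require $(P,I)$ to be standard or accessible, which is why the theorem asserts neither.
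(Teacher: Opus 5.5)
Your proposal is correct and follows the paper's proof essentially step for step. Both use structural induction with the base case taken from Theorem~\ref{t:taugenerators}, describe $L=\ima\phi_1\cap\dom\phi_2$ via the join $P'_1\vee P_2$, identify $\phi_1^{-1}(L)$ and $\phi_2(L)$ while noting that $G$-invariance and $G$-compatibility are preserved, and defer the block chase to the proof of Theorem~\ref{t:general}. Your explicit check of the domain and image computation, which the paper calls straightforward, and your remarks that $\phi_{g,h}^{-1}=\phi_{h,g}$ and on the empty map are useful detail but do not change the route.
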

\begin{proof}
Since $\Inn(\End_G(X))$ is defined in terms of generators, we will prove the assertions by inducting over the involved elements $\phi, \phi_1, \phi_2$, starting with those $\phi$ of the form $\phi_{g,h}$. This base case
follows from Theorem \ref{t:taugenerators}.

Now suppose the theorem holds for $\phi_1, \phi_2\in \Inn(\End_G(X))$. Then $L:=\ima \phi_1\cap \dom \phi_2$ consists of all endomorphisms $t$ with $\ima t\subseteq I'_1\cap I_2$ and $P_1'\vee P_2 \subseteq \ker t$. It is now straightforward to check that
\[
\phi_1^{-1} L =D_{ \bar \beta_1^{-1}(P'_1 \vee P_2), \alpha_1^{-1}(I'_1 \cap I_2) } \mbox{  and }\phi_2 L=D_{  \bar \beta_2(P'_1 \vee P_2),  \alpha_2(I'_1 \cap I_2)}
\]
and hence these parameters define the domain and image of $\phi_2\phi_1$, where we note that
\begin{itemize}
\item $\vee$, $\bar \beta_1^{-1}$, and $\bar \beta_2$ preserve $G$-invariance (the latter two because of $G$-compatibility of $\beta_2$ and $\beta_1$), and thus $\bar \beta_1^{-1}(P'_1 \vee P_2)$ and $\bar \beta_2(P'_1 \vee P_2)$ are $G$-invariant;
\item $\cap$, $\alpha_1^{-1}, \alpha_2$ preserve the property of being unions of $G$-orbits, hence  $ \alpha_1^{-1}(I'_1 \cap I_2)$ and $ \alpha_2(I'_1 \cap I_2)$ are such unions.
\end{itemize}
Moreover, $\alpha_2\alpha_1$ and $\bar \beta_2 \bar \beta_1$ are clearly $G$-compatible. The remaining proof is identical to the proof of Theorem \ref{t:general}, except for adjustments related to the inverted order of composition.
\end{proof}

We can now describe the structure of $\Inn(\End_G(X))$. For a finite Abelian $G$-set $X$, let $A(X)$ consists of all $G$-isomorphisms from $I$ to $I'$ where $I,I' \subseteq X$ are unions of $G$-orbits. In addition, let  $B(X)$ be the set of all $G$-compatible bijections from $P$ to $P'$, where $P, P'$ are $G$-invariant partitions of $X$.
We say that
$\alpha\in A(X), \alpha:I\to I'$ and $\beta\in B(X), \beta: P \to P'$ are compatible, written $\alpha\approx \beta$, if $\beta([i]_P) = [\alpha i]_{P'}$ for all $i \in I$.

Let $V(X) = \{(\alpha, \beta):\alpha\in A(X), \beta\in B(X), \alpha \approx \beta\}$.
For $z=(\alpha, \beta) \in V(X)$ we set $\dom z= (\dom \beta, \dom \alpha)$.
On $V(X)$ we define a binary operation by
\[
(\alpha_2, \beta_2)(\alpha_1, \beta _1)=(\alpha_2 \alpha_1, \bar \beta_2 \bar \beta_1)\,,
\]
where $\bar \beta_i$ is as in Theorem \ref{t:taugeneral}, and  where we define the domain of  $\alpha_2 \alpha_1$ [of $ \bar \beta_2 \bar \beta_1$] as the largest subset of $X$ [finest partition on $X$] for which these expressions are well-defined. It is easy to check that domains and images of  $\alpha_2 \alpha_1$ and $ \bar \beta_2 \bar \beta_1$ are given as in Theorem \ref{t:taugeneral}. Moreover, by essentially the same argument as in this theorem, we have that  $\alpha_2 \alpha_1 \in A(X), \bar \beta_2 \bar \beta_1 \in B(X)$, and $\alpha_2 \alpha_1 \approx \bar \beta_2 \bar \beta_1$, hence $(\alpha_2, \beta_2)(\alpha_1, \beta _1)\in V(X)$.

We now define a normal form on $V(X)$. We first extend the operator $\overline{(P,I)}$ to arbitrary $G$-invariant partitions $P$ and unions of $G$-orbits $I$, noticing that its definition only uses properties of the stabilizers $G_B$ and $G_i$ for $B \in P$ and $i \in I$, and hence is well-defined for general $(P,I)$. Concretely,
if $I$ is not accessible from $P$, then $\overline{(P,I)}=(\{X\}, \varnothing)$, otherwise, $\overline{(P,I)}=(P/\sim_P, I)$, where $\sim_P$ is defined as in Lemma \ref{l:Pmerge}.

For $(\alpha, \beta) \in V(X)$ with corresponding parameters $P,I,P',I'$, let $\overline{(\alpha, \beta)}:\overline{(P,I)} \to \overline{(P',I')}$ be as follows:
\begin{itemize}
\item if  $\dom \alpha$ is not accessible from $\dom \beta$, let $\overline{(\alpha, \beta)}=(\varnothing, \id_{\{X\}})$.
\item otherwise, let $\overline{(\alpha, \beta)}=(\alpha, \beta')$, where $\beta'$ is the mapping induced on  $P/\sim_P$ by $\beta$. 
\end{itemize}

\begin{lemma}
The mapping $\overline{(\alpha, \beta)}$ is well defined.
\end{lemma}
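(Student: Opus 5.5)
The plan is to show that the relation $\sim_P$ of Lemma~\ref{l:Pmerge}, and the accessibility condition, are determined entirely by data that $\alpha$ and $\beta$ preserve. Then $\beta$ maps $\sim_P$-classes onto $\sim_{P'}$-classes, and both cases in the definition of $\overline{(\alpha,\beta)}$ are consistent on the two sides. Throughout, $(\alpha,\beta)\in V(X)$, with $\alpha\colon I\to I'$ a $G$-isomorphism and $\beta\colon P\to P'$ a $G$-compatible bijection satisfying $\alpha\approx\beta$.

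\emph{Step 1: invariants.} Since $\beta$ is a bijection with $\beta(l\cdot B)=l\cdot\beta(B)$, and the same holds for $\beta^{-1}$, the setwise stabilizers agree: $G_{\beta(B)}=G_B$ for all $B\in P$. Since $\alpha$ is a $G$-isomorphism, $G_{\alpha i}=G_i$ for all $i\in I$. Hence the families $\{G_i: i\in I\}$ and $\{G_{i'}: i'\in I'\}$ coincide, with $\alpha$ matching elements with equal stabilizers. In particular, $i$ is a sink of $I$ if and only if $\alpha i$ is a sink of $I'$.

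\emph{Step 2: the first case.} Accessibility of $I$ from $P$ says that for each $B\in P$ there is an $i\in I$ with $G_B\le G_i$. By Step~1 this holds for $(P,I)$ exactly when it holds for $(P',I')$. So if $\dom\alpha$ is not accessible from $\dom\beta$, then both $\overline{(P,I)}$ and $\overline{(P',I')}$ equal $(\{X\},\varnothing)$, and the assignment $(\varnothing,\id_{\{X\}})$ is a legitimate map between them.

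\emph{Step 3: the accessible case.} By Step~1, $G'_B=\bigcap_{i\in I,\,G_B\le G_i}G_i$ equals $G'_{\beta(B)}$ computed in $(P',I')$. For a pair of type (a), $B\sim_P l\cdot B$ with $l\in G'_B$, we get $\beta(l\cdot B)=l\cdot\beta(B)$ with $l\in G'_{\beta(B)}$, so $\beta(B)\sim_{P'}\beta(l\cdot B)$. For a pair of type (b), the condition involves only $G_B$, $G_{B'}$, the sink $i'$, and the stabilizers $G_i$ for $i\in I\setminus\{i'\}$; all of these are transported to the corresponding data for $(P',I')$ by $\beta$ and $\alpha$. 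Since $\sim_P$ consists exactly of the pairs of types (a) and (b), $\beta$ maps $\sim_P$ into $\sim_{P'}$. Applying the same argument to $(\alpha^{-1},\beta^{-1})$, which also lies in $V(X)$, gives the converse: $B\sim_P C\iff\beta(B)\sim_{P'}\beta(C)$. Therefore $\beta'([B]_{\sim_P})=[\beta(B)]_{\sim_{P'}}$ is a well-defined bijection from $P/\sim_P$ to $P'/\sim_{P'}$, with inverse induced by $\beta^{-1}$. It is $G$-compatible because $\beta$ is.

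\emph{Step 4: compatibility with $\alpha$.} Each $\sim_P$-class containing $[i]_P$ is $[i]_{P/\sim_P}$, so $\beta'([i]_{P/\sim_P})=[\beta([i]_P)]_{\sim_{P'}}=[\alpha i]_{P'/\sim_{P'}}$. Hence $\alpha\approx\beta'$, and $\overline{(\alpha,\beta)}\in V(X)$ is a map $\overline{(P,I)}\to\overline{(P',I')}$.

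The main obstacle is the type (b) clause in Step~3. I would verify carefully that the phrase ``for all $i\in I\setminus\{i'\}$'' transfers correctly under $\alpha$, which requires $\alpha$ to map the sink to the sink; Step~1 supplies exactly this. I would also check that the transitive closure implicit in taking classes causes no trouble, which it does not, since Lemma~\ref{l:Pmerge} already shows that $\sim_P$ is an equivalence relation.
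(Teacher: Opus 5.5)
Your proposal is correct and follows the paper's proof essentially step for step. You transfer non-accessibility via $G_{\beta B}=G_B$ and $\{G_i:i\in I\}=\{G_{\alpha i}:i\in I\}$, and you show that $\beta$ carries type (a) merges (using $G'_B=G'_{\beta B}$) and type (b) merges (using that $\alpha$ sends the sink to the sink) to merges in $P'$; applying the same argument to $(\alpha^{-1},\beta^{-1})$ then gives $\beta(Q)=Q'$, and your Step 4 just spells out the compatibility $\alpha\approx\beta'$ that the paper states as clear.
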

\begin{proof}
Let $(\alpha, \beta) \in V(X)$ have corresponding parameters $P,I,P',I'$.
Suppose first that $\dom \alpha$ is not accessible from $\dom \beta$. Then there exists $B \in P$, such that $G_B \not\le G_i$ for all $i \in I$. As $\alpha$ is a $G$-isomorphism, $\{G_i:i \in I\}=\{G_{\alpha i}:i \in I\}$, and as $\beta $ is $G$-compatible, $G_B=G_{\beta B}$. Hence, $\beta B$ witnesses that $\ima \alpha$ is not accessible from $\ima \beta$, and so $\overline{(P',I')}= (\{X\},\varnothing)$ and $\overline{(\alpha, \beta)}=(\varnothing, \id_{\{X\}})$ is well-defined.

Suppose instead that $\dom \alpha$ is accessible from $\dom \beta$. If $(Q,I)=\overline{(P,I)}$, then $Q$ is obtained from $P$ by merging blocks according to one of the processes described in Lemma \ref{l:Pmerge}. Suppose first that $B'=l \cdot B$, where $B \in P$, and $l \in G'_B$. Once again, we have that $\{G_i:i \in I\}=\{G_{\alpha i}:i \in I\}$, $G_B=G_{\beta B}$, and in addition, that $\beta B'=l \cdot \beta B$. It follows that $l \in G'_B=G'_{\beta B}$, and hence we obtain that $\beta B$ and $\beta B'$ are contained in the same block of $Q'$, where $\overline{(P',I')}=(Q',I)$.

By a similar argument, we show that if $B,B'\in P$ are merged by the second construction from Lemma  \ref{l:Pmerge}, then so are $\beta B$ and $\beta B'$, where we notice that if $i\in I$ is a sink, then so is $\alpha(i) \in I'$. It follows that $\beta(Q)$ is a refinement of $Q'$. Applying the same argument to $\alpha^{-1}$ and $\beta^{-1}$, we see that $\beta^{-1}(Q')$ is a refinement of $Q$, and so $\beta Q=Q'$. Thus $\beta$ induces a well defined function $\beta'$ on $Q$.

Clearly,  $\beta'$  is $G$-compatible, and  $\alpha \approx \beta'$ holds. The results follows.
\end{proof}

It immediately follows from the definition that $\overline{\overline{(\alpha, \beta)}}= \overline{(\alpha, \beta)}$.
On $V(X)$, let $\theta$ be the relation defined by $(\alpha, \beta)\theta (\alpha', \beta')$ if and only if $\overline{(\alpha, \beta)}= \overline{(\alpha', \beta')}$. Clearly, $\theta$ is an equivalence relation.

\begin{lemma}\label{l:tau-well}
The relation $\theta$ is compatible with the operation of $V(X)$.
\end{lemma}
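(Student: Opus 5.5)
Compatibility of $\theta$ means: if $z_1\,\theta\, z_1'$ and $z_2\,\theta\, z_2'$, then $z_2z_1\,\theta\, z_2'z_1'$. Since $\theta$ is an equivalence relation, it suffices to change one factor at a time. It therefore suffices to prove
\[
\overline{z_2 z_1}=\overline{z_2\,\overline{z_1}}
\quad\text{and}\quad
\overline{z_2 z_1}=\overline{\overline{z_2}\,z_1}
\]
for all $z_1,z_2\in V(X)$. Applying these twice gives $\overline{z_2z_1}=\overline{\overline{z_2}\,\overline{z_1}}=\overline{z_2'z_1'}$. By the symmetry of the situation (inverting both factors reverses products and interchanges domain and image data, and the bar operator commutes with inversion by the proof of the preceding lemma), I would only treat the first identity in detail.

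\textbf{Degenerate case.} Suppose first that $\dom\alpha_1$ is not accessible from $\dom\beta_1$, so $\overline{z_1}=(\varnothing,\id_{\{X\}})$. Then $z_2\overline{z_1}$ has $\alpha$-part with empty domain. Its bar is $(\varnothing,\id_{\{X\}})$ unless the $\beta$-domain is somehow accessible from $\varnothing$, which is impossible for a nonempty partition. For $z_2z_1$ itself, the domain parameters are $\bar\beta_1^{-1}(P_1'\vee P_2)$ and $\alpha_1^{-1}(I_1'\cap I_2)\subseteq I_1$. Take a block $B\in P_1$ witnessing non-accessibility. It is contained in a block $C$ of the coarser partition, and $G_B\le G_C$, so $G_C\not\le G_i$ for every $i\in I_1$, hence in particular for every $i$ in the smaller set. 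Thus the product is also non-accessible, and both sides equal $(\varnothing,\id_{\{X\}})$. The same argument applies whenever the product becomes non-accessible, using the description of domains in Theorem \ref{t:taugeneral}.

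\textbf{Accessible case.} Here $\overline{z_1}=(\alpha_1,\beta_1')$ with $\beta_1'$ induced on $Q_1=P_1/\sim_{P_1}$. The $\alpha$-components of $z_2z_1$ and $z_2\overline{z_1}$ agree, namely $\alpha_2\alpha_1$. For the $\beta$-components I would avoid block-by-block combinatorics and use the semantic characterization: by Lemma \ref{l:Pmerge}, the standard partition of $(P,I)$ is the coarsest $G$-invariant partition $P''$ with $D_{P'',I}=D_{P,I}$. The lemma is stated under the hypotheses of Theorem \ref{t:tauI-sets}, but its proof only uses stabilizers, so it extends to general $(P,I)$ as the paper remarks. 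So $\overline{(P,I)}$ is determined by the set $D_{P,I}$. Hence it suffices to show that the domain sets $D$ (and similarly the image sets) of $z_2z_1$ and $z_2\overline{z_1}$ coincide, and that the induced $\beta$-maps agree after coarsening.

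For domains, $D_{P_1,I_1}=D_{Q_1,I_1}$ and likewise $D_{P_1',I_1'}=D_{Q_1',I_1'}$. Then $\ima\cap\dom$ is computed as a $D$-set via the join in Theorem \ref{t:taugeneral}, and $D_{P_1'\vee P_2,\,I_1'\cap I_2}=D_{P_1',I_1'}\cap D_{P_2,I_2}$. Pulling back along $\beta_1$ versus $\beta_1'$ gives partitions that generate the same $D$-set. The reason is that $\beta_1$ is $G$-compatible and preserves $G_B$ and $G^B$, and it sends $\sim_{P_1}$-classes to $\sim_{P_1'}$-classes, as shown in the previous lemma. Consequently the two products have the same standard domain and image pairs.

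\textbf{Main obstacle.} The hardest step is checking that the induced bijections on the standard partitions coincide, that is, that $\overline{\bar\beta_2\bar\beta_1}$ and $\overline{\bar\beta_2\bar\beta_1'}$ induce the same map. Since $\bar\beta_1'$ is just $\bar\beta_1$ followed by passing to the coarser partition $Q_1$, the composite $\bar\beta_2\bar\beta_1'$ is the map induced by $\bar\beta_2\bar\beta_1$ on a coarsening. I expect to need that the coarsening $(P_1'\vee P_2)$ versus $(Q_1'\vee P_2)$ is absorbed by the final bar. I would argue this via the uniqueness part of Theorem \ref{t:taugenerators}: there, $\beta$ is determined by the action of $\phi$ on the $D$-set through the formula $\phi(t)(x)=\alpha i$. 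Both sides define the same partial map on $D$, because $t\in D_{P,I}$ is constant on the merged blocks. Hence the induced $\beta$'s on the standard partition agree.
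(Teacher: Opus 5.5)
Your proof is correct in substance but takes a different route from the paper. Both arguments reduce to showing that the two products have the same standardized domain and image pairs. Once that holds, the $\beta$-parts agree because both are induced from the same map. The paper compares $\overline{z_2z_1}$ with $\overline{\overline{z_2}\,\overline{z_1}}$ in one step; your one-factor-at-a-time reduction via the inversion anti-automorphism is equally valid.

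The two proofs differ in how they compare domains. The paper works block by block. It checks that each merge used to form $\bar P_1'$ (your $Q_1'$) and $\bar P_2$ is reproduced when standardizing $\bar\beta_1^{-1}(P_1'\vee P_2)$. For rule (a) this goes via $G'_B\le G'_{[B]}$ in $P_1'\vee P_2$; for rule (b), a sink outside $I_2$ forces non-accessibility. You instead use a semantic characterization: for fixed $I$ and accessible pairs, the standard partition is the coarsest $G$-invariant partition with the same $D$-set. This is the converse part of Lemma \ref{l:Pmerge}, whose proof only counts possible images of blocks and so holds for arbitrary pairs. You combine it with $D_{P_1'\vee P_2,\,I_1'\cap I_2}=D_{P_1',I_1'}\cap D_{P_2,I_2}$ and $D_{P_1',I_1'}=D_{Q_1',I_1'}$. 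This avoids the case analysis of rules (a) and (b) and the sink subtleties altogether.

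Three points to tighten.

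\textbf{The pull-back step.} Listing properties of $\beta_1$ is not an argument. Moreover, an element of $B(X)$ need not preserve $G^B$, which is not even defined for null partitions; fortunately you do not need it. What you need is the following. By the formula of Theorem \ref{t:taugeneral}, $(\alpha_1,\beta_1)$ defines a bijection $\Phi\colon D_{P_1,I_1}\to D_{P_1',I_1'}$. It is well defined because $G_{\beta_1 B}=G_B$ and $G_{\alpha_1 i}=G_i$. It satisfies $\Phi^{-1}(D_{R,J})=D_{\bar\beta_1^{-1}(R),\,\alpha_1^{-1}(J)}$ for every coarsening $R$ of $P_1'$ and every $J\subseteq I_1'$. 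You also need $(\bar\beta_1')^{-1}(R)=\bar\beta_1^{-1}(R)$ whenever $R$ coarsens $Q_1'$. Images are handled by the analogous bijection for $z_2$.

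\textbf{What determines the standard pair.} $\overline{(P,I)}$ is determined by $D_{P,I}$ together with $I$, not by $D_{P,I}$ alone. This is harmless here, since the $I$-components of the two products coincide.

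\textbf{Your ``main obstacle'' is easier than you expect.} Once the standardized domain and image pairs agree, the $\beta$-part of $z_2\overline{z_1}$ is the map induced by that of $z_2z_1$ on a coarsening. So both induce the same bijection on the common standard partition. The uniqueness argument of Theorem \ref{t:taugenerators} also works, but you would have to recheck it for arbitrary elements of $V(X)$, not just generators.
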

\begin{proof}
We first note that if $I$ is not accessible from $P$, then the same will holds for any subset of $I$ and coarsening of $P$. It follows that if $(\alpha, \beta) \in V(X)$ has $\overline{(\alpha,\beta)}=(\varnothing, \id_{\{X\}})$, then so will $(\alpha, \beta)z$ and $z(\alpha, \beta)$, for any $z \in V(X)$.

Hence consider $z_1=(\alpha_1,\beta_1)$, $z_2=(\alpha_2,\beta_2) \in V(X)$ with $\overline{(\alpha_1,\beta_1)}\neq (\varnothing, \id_{\{X\}})\neq \overline{(\alpha_2,\beta_2)}$. It suffices to show that $z_2z_1 \theta \bar z_2 \bar z_1$, in other words that $\overline{z_2z_1}=\overline{\bar z_2 \bar z_1}$. This equation clearly holds if $ \overline{z_2z_1}=(\varnothing, \id_{\{X\}})$, and hence we may assume that does not occur as well.

Suppose that $\beta_1: P_1 \to P'_1$, $\alpha_1: I_1 \to I'_1$, $\beta_2:P_2 \to P'_2$ and $\alpha_2:I_2 \to I'_2$.
Then the domain of the $\beta$-component of $z_2z_1$  is
\[
K:=\bar \beta_1^{-1}(P'_1 \vee P_2)\,,
\]
where $\bar \beta_1^{-1}$ refers to the construct  from Definition \ref{d:bar}. Similarly, the domain of the $\beta$-component of  $\bar z_2 \bar z_1$ is given by
\[
L:=\bar \beta_1^{-1}(\bar P'_1 \vee \bar P_2)\,,
\]
with $(\bar P'_1, I'_1)= \overline{(P'_1, I'_1)}$ and $(\bar P_2, I_2)= \overline{(P_2, I_2)}$.
Suppose that $B,C \in P'_1$ are  such that $B,C$ are contained in the same block of $\bar P'_1$. Then either $C= l \cdot B$ where $l \in G'_B$ or there is a sink $i' \in I'$ such that $G_B, G_C \not\le G_i$ for $i \neq i'$.

Consider first the case where $C= l \cdot B$. Let $[B], [C]$ be the blocks containing $B,C$ in $P'_1 \vee P_2$. Then $G_B \le G_{[B]}$. Moreover, suppose we let
\[
G'_{[B]}=\bigcap_{i \in I'_1 \cap I_2: G_{[B]}\le G_i}G_i\,,
\]
then $G'_{B}\le G'_{[B]}$. Hence $l \in G'_{[B]}$. Let $B'=\beta_1^{-1}([B])$, $C'=\beta_1^{-1}([C])$. We obtain that in $L$, $C'= l \cdot B'$ with $l \in G'_{B'}$, where $G'_{B'}$ is calculated with respect to the partition $L$, and the set  $ \alpha^{-1}_1(I'_1 \cap I_2)$. Hence,  if
\[
(L',  \alpha^{-1}_1(I'_1 \cap I_2))=\overline{(L,  \alpha^{-1}_1(I'_1 \cap I_2))}\,,
\]
then $B'$ and $C'$,  lie in the same block of $L'$, which is the domain of the $\beta$-component of $\overline{z_2z_1}$.

Assume instead that there is a sink $i'\in I'_1$ such that $G_B, G_C \not\le G_i$ for $i \neq i'$.
 If $i'\in I_2$, then a similar argument shows that $B',C'$ lie in the same block of $L'$. If instead $i' \notin I_2$, then $B'$ cannot access $\alpha^{-1}_1(I'_1 \cap I_2)$. In this case $\overline{z_2z_1}= (\varnothing, \id_{\{X\}})$, which we already excluded above.

It follows that if $B,C \in P'_1$ lie in the same block of $\bar P'_1$, then $\bar \beta_1^{-1} [B]$ and $\bar \beta_1^{-1} [C]$  lie in the same block of $L'$, the domain of the $\beta$-component of $\overline{z_2z_1}$. A corresponding results holds for blocks $B,C \in P_2$ that are merged in $\bar P_2$. Hence, $\beta_1^{-1}(\bar P'_1 \vee \bar P_2 )$ is a refinement of $L'$, and therefore
\[
\dom \overline{\bar z_1 \bar z_2}=\overline{(\beta_1^{-1}(\bar P'_1 \vee \bar P_2 ), \alpha_1^{-1}(I'_1 \cap I_2))}
\]
is a refinement in the first coordinate (and equal in the second) of
\[
\overline{(L, \alpha_1^{-1}(I'_1 \cap I_2))}=(L', \alpha_1^{-1}(I'_1 \cap I_2))= \dom\overline{z_2z_1}\,.
\]
Clearly,  in the first coordinate $\dom\overline{z_2z_1}$ is a refinement of $\dom \overline{\bar z_1 \bar z_2}$. Hence these two domains are equal. By a dual argument, we obtain that their images are equal as well. As the components of  $\overline{z_2z_1}$ and $\overline{\bar z_1 \bar z_2}$ are both induced by $\alpha_2\alpha_1$ and $\beta_2\beta_1$ on the same domains, we obtain  $\overline{z_2z_1} =\overline{\bar z_1 \bar z_2}$, as required.
\end{proof}

Set $W(X)= V(X)/ \theta.$ For $[(\alpha, \beta)]_\theta \in W(X)$ we will use the short  notation $[\alpha, \beta]$. We are now able to describe $\Inn(\End_G(X))$ as a substructure of $W(X)$.

\begin{theorem}\label{t:embed2}
Let $X$ be any finite abelian $G$-set. For $\phi \in \Inn(\End_G(X))$, let $\alpha_\phi: I \to I', \beta_\phi:P \to P'$ be the  bijections associated with $\phi$ by \textup{Theorem \ref{t:taugeneral}}. Then $\varphi: \Inn(\End_G(X))\to W(X)$, given by $\varphi(\phi)=[\alpha_\phi, \beta_\phi]$ is an embedding.

In particular, $\Inn(\End_G(X))$ is isomorphic to the substructure of $W(X)$ generated by all elements of $W(X)$ that can be represented as $[\alpha, \beta]$ such that there exist $P,I,P',I',\alpha,\beta$ satisfying the following:
\begin{itemize}
\item $(P,I)$ and $(P',I')$ are valid standard pairs;
\item $\alpha$ is a bijection from $I$ to $I'$ that is compatible with the action of $G$;
\item $\beta$ is a bijection from $P$ to $P'$ that is compatible with the action of $G$, extends the function induced by $\alpha$ on $P$, and satisfies $G^{\beta(B)}=G^B$.
\end{itemize}
\end{theorem}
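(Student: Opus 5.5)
The plan follows the proof of Theorem~\ref{t:embed}, with one extra ingredient: the relation $\theta$ now identifies a pair $(\alpha,\beta)$ with its normal form $\overline{(\alpha,\beta)}$ rather than only collapsing the small-domain cases.

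\emph{Well-definedness.} Theorem~\ref{t:taugeneral} assigns to each $\phi\in\Inn(\End_G(X))$ some parameters $(P,I,P',I',\alpha,\beta)$, but these need not be unique. I will show that any two admissible choices for the same $\phi$ have the same normal form, so they are $\theta$-equivalent.
\begin{enumerate}
\item By Theorem~\ref{t:tauI-sets}, $\dom\phi=D_{P,I}$ is empty exactly when $I$ is not accessible from $P$. In that case both choices normalize to $(\varnothing,\id_{\{X\}})$.
\item Otherwise, $I$ is the union of the images of the endomorphisms in $\dom\phi$. This uses the maps $t_i$ built at the end of the proof of Theorem~\ref{t:tauI-sets}, which work for any accessible pair. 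So $I$, and dually $I'$, are determined by $\phi$.
\item Lemma~\ref{l:Pmerge} shows that the coarsest $G$-invariant $Q$ with $D_{Q,I}=D_{P,I}$ is the first coordinate of $\overline{(P,I)}$. Hence the normalized partitions agree.
\item Using $t_i$ with $t_i(i)=i$, the formula $\phi(t_i)(x)=\alpha i$ for $x\in[\alpha i]_{P'}$ recovers $\alpha$.
\item For $\beta$, I repeat the injectivity argument in the proof of Theorem~\ref{t:taugenerators}, now applied to the induced maps on the normalized partitions. If two candidates send a block $C$ back to $B$ and $B'$, then $t(B)=t(B')$ for every $t\in D_{P,I}$. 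The standard-pair property forces $B=B'$: distinct orbits are excluded by the sink condition, and translates $l\cdot B$ by $l\in G'_B$ are already merged.
\end{enumerate}

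\emph{Homomorphism.} Once $\varphi$ is well defined, it is a homomorphism by the composition formula of Theorem~\ref{t:taugeneral}, which matches the operation on $V(X)$. Lemma~\ref{l:tau-well} shows this operation descends to $W(X)$.

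\emph{Injectivity.} Suppose $\varphi(\phi_1)=\varphi(\phi_2)$. If both classes equal $[\varnothing,\id_{\{X\}}]$, both maps are empty. Otherwise the normal forms coincide. Domains and images are $D_{P,I}$, which is unchanged under normalization by Lemma~\ref{l:Pmerge}. The formula $\phi(t)(x)=\alpha i$, where $i$ is the unique element of $t(\beta^{-1}([x]_{P'}))$, also survives normalization. This is because merged blocks have the same image under every $t\in D_{P,I}$, which is exactly how $\sim_P$ was built. So $\phi_1$ and $\phi_2$ agree pointwise.

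\emph{The final assertion.} $\Inn(\End_G(X))$ is generated by the $\phi_{g,h}$. Theorem~\ref{t:taugenerators} shows that their images are exactly the classes $[\alpha,\beta]$ represented by tuples with valid standard pairs and the stated compatibility properties. The image of $\varphi$ is therefore the substructure of $W(X)$ generated by these classes.

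\emph{Main obstacle.} I expect the hardest step to be showing that $\beta$ is recovered from $\phi$ up to normalization when $\phi$ is a product rather than a generator. The parameters produced by Theorem~\ref{t:taugeneral} need not form standard pairs. I would pass to $\overline{(\alpha,\beta)}$ first and then apply the argument of Theorem~\ref{t:taugenerators} verbatim. That argument only uses that $(P,I)$ is a standard pair with $I$ accessible, not that it is valid.
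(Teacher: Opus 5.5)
\textbf{The gap is in step 2, and step 4 inherits it because it uses the same maps.}

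The endomorphism $t_i$ with $t_i(i)=i$ and $P\subseteq\ker t_i$ can exist only if $G_{[i]_P}\le G_i$. In the proof of Theorem~\ref{t:tauI-sets} this inequality comes from condition (1): a block meets $I$ at most once. Standard pairs satisfy condition (1). The parameters $\alpha_1^{-1}(I_1'\cap I_2)$ and $\bar\beta_1^{-1}(P_1'\vee P_2)$ that Theorem~\ref{t:taugeneral} produces for a product need not, because the join can place $i$ and $l\cdot i$ with $l\notin G_i$ in one block. Normalizing does not repair this, so the claim in your last paragraph, that $\overline{(\alpha,\beta)}$ sits on standard pairs, is also false.

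Here is a concrete case. Let $G=\{e,l\}\cong\Z_2$ act on $X=\{x,l\cdot x,y,l\cdot y,z\}$, with $z$ the only fixed point.
\begin{itemize}
\item Let $h'$ fix $x,l\cdot x,z$ and send $y\mapsto x$, $l\cdot y\mapsto l\cdot x$.
\item Let $h''$ fix $x,l\cdot x,z$ and send $y\mapsto l\cdot x$, $l\cdot y\mapsto x$.
\end{itemize}
Put $I=\{x,l\cdot x,z\}$, $P_1=\{\{x,y\},\{l\cdot x,l\cdot y\},\{z\}\}$ and $P_2=\{\{x,l\cdot y\},\{l\cdot x,y\},\{z\}\}$. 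Then $\phi_{\id,h'}$ and $\phi_{\id,h''}$ are the identity maps on $D_{P_1,I}$ and $D_{P_2,I}$, and both pairs are valid standard pairs. Their product carries the parameters $I$ and $P_1\vee P_2=\{\{x,l\cdot x,y,l\cdot y\},\{z\}\}$. This pair is accessible, yet $D_{P_1\vee P_2,I}=\{c_z\}$, where $c_z$ is the constant map onto $z$. The reason is that an equivariant map constant on $\{x,l\cdot x\}$ must send $x$ to a fixed point. So the union of the images is $\{z\}\ne I$, and neither $I$ nor the values of $\alpha$ on $\{x,l\cdot x\}$ can be recovered from $\phi$.

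\textbf{This breaks the statement itself, not only your argument.} The identity on $\{c_z\}$ is also the generator $\phi_{\id,c_z}$, whose standard pair is $(\{X\},\{z\})$. Normalization never changes $\dom\alpha$ when the pair is accessible. So the two descriptions normalize to forms with $\dom\alpha=\{x,l\cdot x,z\}$ and $\dom\alpha=\{z\}$, and these are not $\theta$-related.

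Choosing representatives does not help either. Let $k',k''$ be the analogous maps fixing $y,l\cdot y,z$, sending $x\mapsto y$ and $x\mapsto l\cdot y$ respectively. Then $\phi_{\id,k''}\phi_{\id,k'}=\phi_{\id,h''}\phi_{\id,h'}$ in $\Inn(\End_G(X))$. However, the products of their classes in $W(X)$ have $\dom\alpha=\{y,l\cdot y,z\}$ and $\{x,l\cdot x,z\}$ respectively. Hence no multiplicative $\varphi$ can send every generator to the class of its tuple from Theorem~\ref{t:taugenerators}.

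So the uniqueness you are trying to prove in steps 2--5 is false for $\theta$ as defined, and $\varphi$ is not well defined. The paper's proof only checks that $\overline{(\alpha,\beta)}$ still describes $\phi$. It never addresses whether the normalized description is unique, so it has the same hole. Your injectivity and final-assertion steps match the paper's and are fine.

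A repair needs a stronger normal form. One option is to discard the orbits of $i\in I$ with $G_B\not\le G_i$ for all $B\in P$, and to restrict $\alpha$ and $I'$ accordingly. Lemma~\ref{l:tau-well} would then have to be re-proved.
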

\begin{proof}
Our construction guarantees that $\varphi$ is a homomorphism, provided it is well defined.

Hence let $\phi \in \Inn(\End_G(X))$, and $P,I,P',I',\alpha, \beta$ be the parameters associated with $\phi$ by Theorem \ref{t:taugeneral}. It suffices to show that
$\phi$ can also be described by the pair $\overline{(\alpha, \beta)}$. This is clear if $\phi$ is the empty mapping. Otherwise, let $\overline{(P,I)}=(Q,I)$, $\overline{(P',I')}=(Q',I')$, and  $\overline{(\alpha, \beta)}= (\alpha, \beta')$. 

We can show that $D_{P,I}=D_{Q,I}$ and $D_{P',I'}=D_{Q',I'}$ exactly as in the proof of Lemma \ref{l:Pmerge}, as conditions (1)--(4)\ of Lemma \ref{t:tauI-sets} were not needed for proving these equalities.  Hence, let $t \in D_{P,I}=D_{Q,I}$, then for any $x \in X$, $t(\beta^{-1}([x]_{Q})=t(\beta'^{-1}([x]_{Q'})$, as $\beta'$ is the function induced by $\beta$ on $Q$. Thus $(\alpha, \beta')=\overline{(\alpha, \beta)}$ also describes $\phi$. It follows that $\varphi$ is a well-defined homomorphism.

That $\varphi$ is injective follows immediately from Theorem \ref{t:taugeneral}, while the final assertion follows from the description of the generators $\phi_{g,h}$ of $\Inn(\End_G(X))$ in Theorem \ref{t:taugenerators}.
\end{proof}

\section{Problems}
\label{Sec:questions}

The referee suggested the following interesting problem. 
Having a notion of inner automorphism leads rather naturally to wondering about outer automorphisms, particularly in light of Remark \ref{Rem:inverse} where it was noted that $\Inn(S)\cong S$ when $S$ is the symmetric inverse monoid. 

\begin{que}
	Is there a natural notion of \emph{outer} automorphism of a semigroup, at least for the inverse case? If so, what is the outer automorphism group of the symmetric inverse monoid? The symmetric group $S_6$ on $6$ symbols has an exceptional outer automorphism; does this happen for the inverse symmetric monoid on $6$ symbols?
\end{que}

Let $G$ be a group of permutations on a finite set $X$, and let $t\in T(X)\setminus G$ be a transformation. Denote by $\langle G,t\rangle$ the semigroup generated by $G$ and $t$. When $G$ has special properties (as defined in the classes of groups and classification theorems in
\cite{AC16,AABCS21,ABC21,ABC19,ACS17}), the semigroups  $\langle G,t\rangle$, for all non-permutations $t\in T(X)$, have a rich structure and deep interconnections with permutation groups, automata theory, combinatorics, and geometry. Thus it seems especially instructive to solve the following problem.

\begin{que}
	With $X$, $G$ and $t\in T(X)\setminus G$ as above, assume $G$ belong to one of the classes of permutation groups discussed in the papers mentioned above. Characterize the partial inner automorphism monoid of  $\langle G,t\rangle$.
\end{que}

Baumslag \cite{baumslag} has shown that the automorphism group  of a finitely generated residually finite group is itself residually finite. This result does not extend to the partial automorphism inverse monoid of a finitely generated residually finite semigroup. For example, if $B=\{b_i: i \in \mathbb{N}\} \cup \{0\}$ is an infinite null semigroup, $A= \langle a\rangle$ is infinite, and $S = A\cup B$ with $a^k b_l = b_l a^k = b_{k+l}$, then it is easy to see that the partial automorphisms of $S$ include a semigroup isomorphic to the symmetric group on an infinite set. The latter group is not residually finite because it contains the alternating group, which is an infinite simple group (see \cite{C-ST2023}, Lem.~2.6.3, p.~71). Thus there are partial automorphisms of $S$ which cannot be separated by a homomorphism into a finite semigroup.

However, we have been unable to resolve the following questions, suggested by the referee, for the inner partial automorphism monoid.

\begin{que}
	Let $S$ be a finitely generated residually finite semigroup. Is $\Inn(S)$ residually finite?
\end{que}

While Theorem \ref{Thm:RM} gives a general description of the inner automorphism monoid $\Inn(S)$ of a Rees matrix semigroup $S$, the referee pointed out further questions of interest.

\begin{que}
	Can two nonisomorphic Rees matrix semigroups have isomorphic partial inner automorphism monoids? What can be said about finite generation and finite presentability of $\Inn(S)$ for $S$ a Rees matrix semigroup?
\end{que}

In addition, we have left untouched the case of completly $0$-simple semigroups, that is, Rees $0$-matrix semigroups.

\begin{que}
	Characterize the partial inner automorphism monoids of Rees $0$-matrix semigroups.
\end{que}

We found the partial inner automorphisms of $T(X)$. A certainly much more demanding challenge is the following. 

\begin{que}
	Characterize the partial inner automorphism monoid of the partition monoid and other diagram monoids (such as Planar, Jones, Kauffman, Martin, Temperley/Lieb, etc.).
\end{que}

Regarding the endomorphism monoids of $G$-sets, we assumed that $G$ is abelian.
\begin{que}
	Extend the results of \S\ref{Sec:mappingG-set} on $G$-sets to the case of nonabelian groups $G$.
\end{que}

The following papers deal with several transformation semigroups (and some also deal with conjugacy):  \cite{ArKiKo_Inv,ArKiKoMaTA,AKM14,F02,GaMa10,Ko18,KuMa07,Steinberg15,St19}.
\begin{que}
	Extend the results on $T(X)$ in this paper to the transformation semigroups appearing in the papers cited above.
\end{que}

The theorems and problems in this paper have natural linear counterparts.
\begin{que} 
	Characterize the partial inner automorphism monoid of the endomorphism monoid of a finite dimensional vector space over a field.
\end{que}

When results hold for both finite sets and finite dimensional vector spaces, it is natural to try to extend the results to independence algebras \cite{G95,G07,CS00,ABCKK22}.

\begin{que}
	Characterize the partial inner automorphism monoid of the endomorphism monoid of a finite dimensional independence algebra. The classification of independence algebras \cite{ABCKK22} may be useful for this.
\end{que}

An even more general problem is the following. 
\begin{que}
	Characterize the partial inner automorphism monoid of the endomorphism monoid of free objects \cite{AMS03} or of the endomorphism monoid of algebras admitting some general notion of independence \cite{AW09}. Examples of the latter include the endomorphism monoids of $\mathit{MC}$-algebras, $M S$-algebras, $S C$-algebras, and $S C$-ranked algebras \cite[Ch.~8]{AW09}. The first step would should be to solve the problem for the endomorphism monoid of an $S C$-ranked free $M$-act \cite[Ch.~9]{AW09}, and for an $S C$-ranked free module over an $\aleph_1$-Noetherian ring \cite[Ch.~10]{AW09}.
\end{que}

Related to the previous problem, we have the following.
\begin{que}
	Since all varieties of bands are known, describe the partial inner automorphism monoid of the endomorphism monoid of the free objects in each variety of bands (for details and references, see \cite{AK07}).
\end{que}

\begin{que}
	Let $C$ be a class of inverse monoids with zero (say, chains, semilattices with identity, Clifford monoids, etc). Characterize the semigroups $S$ such that the partial inner automorphism monoid of $S$ belongs to $C$. 
\end{que}

\section*{Acknowledgements}
This work is funded by national funds through the FCT - Fundaç\~{a}o para a Ci\^{e}ncia e a Tecnologia, I.P., under the scope of the projects UIDB/00297/2020 and UIDP/00297/2020 (Center for Mathematics and Applications).




\begin{thebibliography}{99}

\bibitem{AABCS21}
J. Ara\'{u}jo, J.P. Ara\'{u}jo, W. Bentz, P.J. Cameron, and P. Spiga,
A transversal property for permutation groups motivated by partial transformations,
\emph{J. Algebra} \textbf{573} (2021), 741--759.

\bibitem{ABC19}
J. Ara\'{u}jo, W. Bentz, and P.J. Cameron,
Orbits of primitive k-homogenous groups on $(n-k)$-partitions with applications to semigroups,
\emph{Trans. Amer. Math. Soc.} \textbf{371} (2019), 105--136.

\bibitem{ABC21}
J. Ara\'{u}jo, W. Bentz, and P.J. Cameron,
The existential transversal property: a generalization of homogeneity and its impact on semigroups,
\emph{Trans. Amer. Math. Soc.} \textbf{374} (2021), 1155--1195.

\bibitem{ABCKK22}
J. Ara\'{u}jo, W. Bentz, P.J. Cameron, M. Kinyon, and J. Konieczny,
Matrix theory for independence algebras,
\emph{Linear Algebra Appl.} \textbf{642} (2022), 221--250.

\bibitem{ABKKMM}
J. Ara\'{u}jo, W. Bentz, M. Kinyon, J. Konieczny, A. Malheiro, and V. Mercier,
\textit{Conjugacy in abstract semigroups, transformation and diagram
monoids, and conjugacy growth},
to appear.

\bibitem{AC16}
J. Ara\'{u}jo and P.J. Cameron,
Two generalizations of homogeneity in groups with applications to regular semigroups,
\emph{Trans. Amer. Math. Soc.} \textbf{368} (2016), 1159--1188.

\bibitem{ACS17}
J. Ara\'{u}jo, P.J. Cameron, and B. Steinberg,
Between primitive and 2-transitive: synchronization and its friends,
\emph{EMS Surv. Math. Sci.} \textbf{4} (2017), 101--184.

\bibitem{ArKiKo_Inv}
J. Ara\'{u}jo, M. Kinyon, and J. Konieczny,
Conjugacy in inverse semigroups,
\emph{J. Algebra}, \textbf{533} (2019), 142--173.

\bibitem{ArKiKoMaTA}
J. Ara\'{u}jo, M. Kinyon, J. Konieczny, and A. Malheiro,
Four notions of conjugacy for abstract semigroups,
\emph{Proc. Roy. Soc. Edinburgh Sect. A}
\textbf{147} (2017), 1169--1214.

\bibitem{AK07}
J. Ara\'{u}jo and J. Konieczny,
Automorphisms of endomorphism monoids of relatively free bands,
\emph{Proc. Edinb. Math. Soc.} \textbf{50} (2007), 1--21.

\bibitem{AKM14}
J. Ara\'{u}jo, J. Konieczny, and A. Malheiro,
Conjugation in semigroups,
\emph{J. Algebra} \textbf{403} (2014), 93--134.

\bibitem{AMS03}
J. Ara\'{u}jo, J.M. Mitchell, and N. Silva,
On generating countable sets of endomorphisms,
\emph{Alg. Univers.} \textbf{50} (2003), 61--67.

\bibitem{AW09}
J. Ara\'{u}jo and F. Wehrung,
Embedding properties of endomorphism semigroups,
\emph{Fund. Math.} \textbf{202} (2009), 125--146.

\bibitem{baumslag}
G. Baumslag,
Automorphism groups of residually finite groups,
\emph{J. London Math. Soc.} \textbf{38} (1963), 117--118.

\bibitem{BorralhoKinyon2020}
M. Borralho and M. Kinyon,
Variants of epigroups and primary conjugacy,
\emph{Comm. Algebra} \textbf{48} (2020), no.~12, 5465--5473.

\bibitem{pjc}
P.J. Cameron,
Endomorphisms and partial isomorphisms,
\emph{Semigroup Forum} (2025). \url{https://doi.org/10.1007/s00233-025-10514-5}

\bibitem{CS00}
P.J. Cameron and C. Szabó,
Independence algebras,
\emph{J. London Math. Soc. (2)} \textbf{61} (2000), 321--334.

\bibitem{C-ST2023}
T. Ceccherini-Silberstein and M. Coornaert,
\emph{Cellular automata and groups}, 2nd ed.,
Springer Monogr. Math., Springer, Cham, 2023.

\bibitem{ClPr64}
A.H. Clifford and G.B. Preston,
\emph{The Algebraic Theory of Semigroups},
Mathematical Surveys, No.~7, Amer.\ Math.\ Soc., Providence, Rhode Island, 1964 (Vol.~I) and 1967 (Vol.~II).

\bibitem{Derech2}
V. D. Derech,
Classification of finite commutative semigroups for which the inverse monoid of local automorphisms is a $\Delta$-semigroup,
\emph{Ukraïn. Mat. Zh.} \textbf{67} (2015), 867--873; translation in \emph{Ukrainian Math. J.} \textbf{67} (2015), 981--988.\\

\bibitem{Derech}
V. D. Derech,
Complete classification of finite semigroups for which the inverse monoid of local automorphisms is a permutable semigroup,
\emph{Ukra\"{i}n. Mat. Zh.} \textbf{68} (2016), 1571--1578; translation in \emph{Ukrainian Math. J.} \textbf{68} (2017), 1820--1828

\bibitem{Derech1}
V. D. Derech,
Complete classification of finite semigroups for which the inverse monoid of local automorphisms is a $\Delta$-semigroup,
\emph{Semigroup Forum} \textbf{102} (2021), 397--407.

\bibitem{Domanov}
O. I. Domanov,
Semigroups of all partial automorphisms of universal algebras. (Russian)
\emph{Izv. Vys\v{s}. U\v{c}ebn. Zaved. Matematika} \textbf{1971} (1971), no. 8(111), 52--58.

\bibitem{F02}
V.H. Fernandes,
Presentations for some monoids of partial transformations on a finite chain: a survey,
\emph{Semigroups, Algorithms, Automata and Languages, Coimbra, 2001}, 363--378, World Sci. Publ., River Edge, NJ (2002).

\bibitem{FitzGerald1}
D. G. FitzGerald,
Representations of inverse monoids by partial automorphisms,
\emph{Semigroup Forum} \textbf{61} (2000), no.~3, 357--362.

\bibitem{GaMa10}
O. Ganyushkin and V. Mazorchuk,
\emph{Classical finite transformation semigroups:\ an introduction},
Algebra and Applications \textbf{9},
Springer-Verlag, London, 2010.

\bibitem{GarretaGray2021}
A. Garreta and R. D. Gray,
On equations and first-order theory of one-relator monoids,
\emph{Inform. Comput.} \textbf{281} (2021), Paper No.~104745, 19~pp.

\bibitem{Goberstein1}
S. M. Goberstein,
$\mathcal{PA}$-isomorphisms of inverse semigroups,
\emph{Algebra Universalis} \textbf{53} (2005), 407--432.

\bibitem{Goberstein2}
S. M. Goberstein,
Inverse semigroups determined by their partial automorphism monoids,
\emph{J. Aust. Math. Soc.} \textbf{81} (2006), 185--198.

\bibitem{G95}
V. Gould,
Independence algebras,
\emph{Alg. Univers.} \textbf{33} (1995), 294--318.

\bibitem{G07}
R. Gray,
Idempotent rank in endomorphism monoids of finite independence algebras,
\emph{Proc. Roy. Soc. Edinb. Sect. A} \textbf{137} (2007), 303--331.

\bibitem{Howie}
J.M. Howie,
\emph{Fundamentals of semigroup theory},
Oxford University Press, New York, 1995.

\bibitem{Jack2023}
T. Jack,
On the complexity of inverse semigroup conjugacy,
\emph{Semigroup Forum} \textbf{106} (2023), no.~3, 618--632.

\bibitem{Jack2024}
T. Jack,
\emph{Answering five open problems involving semigroup conjugacy},
arXiv:2411.16284, 2024.

\bibitem{KS19}
M. Kinyon and D. Stanovsk\'{y},
Abelianness and centrality in inverse semigroups,
\emph{Proc. Edinburgh Math. J.}, to appear. 
\url{arXiv:2209.13805}.

\bibitem{Ko18}
J. Konieczny,
A new definition of conjugacy for semigroups,
\emph{J. Algebra and Appl.} \textbf{17} (2018), 1850032, 20 pp.

\bibitem{KuMa07}
G. Kudryavtseva and V. Mazorchuk,
On conjugation in some transformation and Brauer-type semigroups,
\emph{Publ. Math. Debrecen} \textbf{70} (2007), 19--43.

\bibitem{LiuWang2022}
X. Liu and S. Wang,
The transitivity of primary conjugacy in regular $\omega$-semigroups,
\emph{Open Math.} \textbf{20} (2022), no.~1, 1479--1508.

\bibitem{Mitsch}
H. Mitsch,
A natural partial order for semigroups,
\emph{Proc. Amer. Math. Soc.} \textbf{97} (1986), 384--388.

\bibitem{NarendranOtto1985}
P. Narendran and F. Otto,
Complexity results on the conjugacy problem for monoids,
\emph{Theoret. Comput. Sci.} \textbf{35} (1985), no.~2--3, 227--243.

\bibitem{NarendranOttoWinklmann1984}
P. Narendran, F. Otto, and K. Winklmann,
The uniform conjugacy problem for finite Church-Rosser Thue systems is NP-complete,
\emph{Inform. Control} \textbf{63} (1984), no.~1--2, 58--66.

\bibitem{Otto1984}
F. Otto,
Conjugacy in monoids with a special Church-Rosser presentation is decidable,
\emph{Semigroup Forum} \textbf{29} (1984), no.~1--2, 223--240.

\bibitem{Preston}
G. B. Preston,
Inverse semigroups: some open questions,
\emph{Proceedings of a Symposium on Inverse Semigroups and their Generalisations (Northern Illinois Univ., DeKalb, Ill., 1973)}, pp. 122--139, Northern Illinois University, De Kalb, IL, 1973.

\bibitem{Preston2}
G. Preston,
A note on representations of inverse semigroups,
\textit{Proc. Amer. Math. Soc.} \textbf{8} (1957), 1144--1147.

\bibitem{Shevrin}
L.N. Shevrin,
Epigroups,
in \emph{Structural theory of automata, semigroups, and universal algebra}, 331--380, NATO Sci. Ser. II Math. Phys. Chem., 207,
Springer, Dordrecht, 2005

\bibitem{Steinberg15}
B. Steinberg,
\emph{The representation theory of finite monoids},
Springer Monographs in Mathematics, Springer, New York, 2015.

\bibitem{St19}
B. Steinberg,
Linear conjugacy,
\emph{Canad. Math. Bull.} \textbf{62} (2019), 886--895.

\bibitem{Zhang}
L. Zhang,
Conjugacy in special monoids,
\emph{J. Algebra} \textbf{143} (1991), no.~2, 487--497.

\bibitem{Zhang1992}
L. Zhang,
Applying rewriting methods to special monoids,
\emph{Math. Proc. Cambridge Philos. Soc.} \textbf{112} (1992), no.~3, 495--505.

\end{thebibliography}
\end{document}